\newcounter{thm}
\newtheorem{theorem}[thm]{Theorem}
\newtheorem*{theorem*}{Theorem}
\newtheorem{lemma}[thm]{Lemma}
\newtheorem{proposition}[thm]{Proposition}
\newtheorem*{conjecture*}{Conjecture}
\newtheorem{corollary}[thm]{Corollary}
\newtheorem{remark}[thm]{Remark}
\newtheorem*{remark*}{Remark}
\theoremstyle{definition}
\newtheorem{definition}[thm]{Definition}
\renewcommand{\Im}{\mathop{\mathrm{Im}}}
\newcommand{\dist}{\mathop{\mathrm{dist}}}
\newcommand{\rot}{\mathop{\mathrm{rot}}}
\newcommand{\id}{\mathrm{id}}
\newcommand{\eps}{\varepsilon}
\newcommand{\bbC}{\mathbb C}
\newcommand{\bbR}{\mathbb R}
\newcommand{\bbQ}{\mathbb Q}
\newcommand{\cV}{\mathcal V}
\newcommand{\bbH}{\mathbb H}
\newcommand{\bbN}{\mathbb N}
\newcommand{\bbZ}{\mathbb Z}
\definecolor{gr}{rgb}{0.7, 1, 0.7}
\definecolor{rr}{rgb}{1, 0.7, 0.7}
\author{N. Goncharuk, I. Gorbovickis}
\title{Renormalization and scaling of bubbles}
\thanks{The research of the second author was supported by the German Research Foundation (DFG, project number  455038303).}
\begin{document}
 \maketitle

\begin{abstract}
 The paper explores scaling properties of bubbles --- a complex analogue of Arnold tongues, associated to a one-dimensional family of analytic circle diffeomorphisms.

 Bubbles are smooth loops in the upper half-plane attached at all rational points of the real line.
Results of a paper by X.~Buff and N.~Goncharuk (2015) show that the size of a $p/q$-bubble has order at most $q^{-2}$. 
In the current paper we improve this estimate by showing that the size of a $p/q$-bubble near a bounded-type irrational number $\alpha$ has order $d^{\xi(\alpha)} \cdot q^{-2}$, where $\xi(\alpha)>0$, and $d$ is the distance between $\alpha$ and $p/q$.

Proofs are based on a renormalization technique. In particular, $\xi(\alpha)$ is related to the unstable and the top stable eigenvalues of the renormalization operator at the rotation by $\alpha$.
\end{abstract}

\section{Introduction}

\subsection{Rotation numbers of circle maps and conjugacies to rotations}

Let  $f\colon \bbR/\bbZ \to \bbR/\bbZ$ be an orientation-preserving circle homeomorphism, and let $F\colon \bbR\to \bbR$ be its lift to the real line. Define
$$\rot F = \lim_{n\to +\infty}\frac{F^n (x)}{n};$$
then the rotation number of $f$, denoted by $\rot f\in\bbR/\bbZ$, is defined as
$$
\rot f = \rot F \mod 1.
$$
It is well known that the above limit always exists and the rotation number $\rot f$ is independent of the point $x$ and the choice of a lift $F$.
Furthermore, the rotation number $\rot f$ is invariant under topological conjugacies, and is rational if and only if $f$ has a periodic orbit. The following classical theorem due to J.-C.~Yoccoz (based on previous results by V.~Arnold and M.~Herman) shows that under certain conditions, analytic circle diffeomorphisms with the same rotation number are analytically conjugate.

For any $\alpha\in\bbR$, let $R_\alpha\colon\bbR/\bbZ\to\bbR/\bbZ$ be the rotation of the circle by~$\alpha$, i.e., $R_\alpha (z)=z+\alpha \mod 1$.

\begin{theorem}[J.-C. Yoccoz, \cite{Yoc}]
\label{th-Yoc}
There exists a full-measure set $\mathcal H\subset(\bbR\setminus \bbQ)/\bbZ$ such that if an analytic circle diffeomorphism $f$ has rotation number $\alpha \in \mathcal H$, then $f$ is analytically conjugate to $R_\alpha$: for some analytic circle diffeomorphism $h$, we have $f=h^{-1}\circ R_\alpha \circ h$.
\end{theorem}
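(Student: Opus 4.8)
The plan is to produce $\mathcal H$ explicitly as an arithmetically defined set and then to upgrade a merely topological conjugacy to an analytic one. I would take $\mathcal H$ to be the set of \emph{Brjuno numbers} — those $\alpha$ whose continued-fraction denominators $q_n$ satisfy $\sum_n q_n^{-1}\log q_{n+1}<\infty$ (this is the optimal choice in the analytic category); a cruder but sufficient choice is the set of Diophantine $\alpha$, i.e.\ $|\alpha-p/q|\ge c\,q^{-2-\tau}$ for all $p/q\in\bbQ$ and some $c,\tau>0$. In either case the complement has Lebesgue measure zero, so $\mathcal H$ has full measure. Since an analytic diffeomorphism $f$ is in particular $C^2$ and has irrational rotation number $\alpha$, Denjoy's theorem already yields a homeomorphism $h_0$ with $f=h_0^{-1}\circ R_\alpha\circ h_0$; the whole point is to show that, when $\alpha\in\mathcal H$, this $h_0$ can be taken analytic.

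The first serious step I would carry out is the perturbative case: $f$ analytic and close to $R_\alpha$ on a complex strip. Writing the desired conjugacy as $h=\id+u$ and expanding the equation $h\circ f=R_\alpha\circ h$ gives the cohomological equation $u\circ R_\alpha-u=-(f-R_\alpha)+(\text{quadratically small})$. On the Fourier side this is solved by dividing the $n$-th coefficient by $e^{2\pi i n\alpha}-1$, and the Brjuno (resp.\ Diophantine) bound on $\alpha$ controls the resulting loss of analyticity width in terms of the partial quotients of $\alpha$. One then runs a Newton/KAM iteration — or, staying in the analytic category, Siegel's majorant-series method — in which the quadratic gain outpaces the arithmetic loss, yielding convergence to an analytic conjugacy on a slightly thinner strip. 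This is the analytic linearization theorem for near-rotations with rotation number in $\mathcal H$.

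To remove the closeness hypothesis I would use real a priori bounds together with renormalization. Attached to $f$ is the sequence of renormalizations $\mathcal R^{n}f$: suitably rescaled first-return maps of $f$ to the shrinking intervals of the dynamical partition associated with the continued fraction of $\alpha$, organized as commuting pairs. Cross-ratio / Koebe-type distortion estimates — available precisely because $f$ extends holomorphically to an annulus, so its nonlinearity is uniformly controlled — give real a priori bounds: the pairs $\mathcal R^{n}f$ have uniformly bounded nonlinearity and extend to a fixed complex neighbourhood, so $\{\mathcal R^{n}f\}$ lies in a compact family of analytic commuting pairs. Compactness plus analyticity then forces $\mathcal R^{n}f$ to converge to the linear (rigid-rotation) pair, and for $\alpha\in\mathcal H$ one can make this quantitative, so that for large $n$ the pair $\mathcal R^{n}f$ enters the domain of the perturbative theorem. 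Applying the local result at that scale and pulling the resulting analytic conjugacy back through the analytic renormalization changes of coordinates produces an analytic conjugacy between $f$ and $R_\alpha$.

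I expect the main obstacles to be the two ingredients that are easy to state and hard to prove. The first is the real a priori bounds: controlling distortion \emph{uniformly along the whole renormalization orbit} demands delicate complex-analytic estimates, and this is where the hypothesis of analyticity — rather than mere $C^2$ — is genuinely used. The second is the arithmetic bookkeeping: to reach all of $\mathcal H$, and not just some smaller full-measure subset, one must track precisely how the loss in the cohomological equation depends on the partial quotients of $\alpha$ and interleave this with the renormalization steps. This quantitative matching between the dynamics and the arithmetic is the technical heart of Yoccoz's argument and the point where it improves on the earlier work of Arnold and Herman.
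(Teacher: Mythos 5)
Your overall architecture — a local KAM/Newton theorem for near-rotations with small-divisor bounds controlled by the arithmetic of $\alpha$, combined with renormalization and real a priori bounds to reduce the global case to the local one — does capture the structure of Yoccoz's proof. But the opening choice of $\mathcal H$ contains a genuine error, and the paper itself flags it in the sentence immediately following the theorem statement. You propose $\mathcal H = \mathcal B$, the Brjuno set, and assert this is ``the optimal choice in the analytic category.'' That is true for the \emph{local} problem (Theorem~\ref{th-Yoc-2} here, and the Siegel--Brjuno theorem for germs), but it is false for the \emph{global} problem of Theorem~\ref{th-Yoc}: Yoccoz showed that for Brjuno numbers $\alpha$ outside the Herman set $\mathcal H$ there exist analytic circle diffeomorphisms with rotation number $\alpha$ that are not analytically linearizable. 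The paper states explicitly that $\mathcal H$ is a \emph{proper} subset of the Brjuno numbers and that $\mathcal H$ (not $\mathcal B$) is optimal. So the set you name does not satisfy the conclusion, and the theorem you would end up proving is either Herman's earlier Diophantine result (your ``cruder'' alternative, which does work) or something strictly weaker than Yoccoz's.

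The reason the Brjuno condition fails in the global setting is precisely the ``arithmetic bookkeeping'' you flag at the end, and your closing paragraph in fact undercuts your opening one. After $n$ renormalizations the rotation number is $\alpha_n = G^n(\alpha)$, and to invoke the perturbative theorem at that scale you need $\Phi(\alpha_n)$ to be controlled relative to how close $\mathcal R^n f$ has gotten to a rotation and to the width of strip on which $\mathcal R^n f$ is defined. Finiteness of $\Phi(\alpha)$ does not prevent $\Phi(\alpha_n)$ from spiking (large partial quotients far out in the continued fraction), and when it does the local theorem cannot be applied at that scale without losing too much analyticity width to ever recover. The Herman condition is exactly the recursive arithmetic inequality that guarantees the perturbative gains and the renormalization losses stay matched at every scale. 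So the scheme you outline is the right scheme, but it does not close for all Brjuno $\alpha$; identifying and proving sufficiency of the correct condition $\mathcal H$ is the content of Yoccoz's theorem, not a side issue.
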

The set $\mathcal H\subset (\bbR\setminus \bbQ)/\bbZ$ is called the set of Herman numbers. $\mathcal H$ is a proper subset of the set of Brjuno numbers $\mathcal B_C$ for any $C$ (see Definition~\ref{Brjuno_def}). It was shown by Yoccoz that the set of Herman numbers is optimal for the above theorem.

\subsection{Complex rotation numbers and bubbles}\label{complex_rot_number_intro_sec}
In \cite{Arn-english}, V.Arnold suggested the following construction of a \emph{complex rotation number} (the term is due to E.~Risler). Given an analytic orientation-preserving circle diffeomorphism $f\colon\bbR/\bbZ\to\bbR/\bbZ$ and a complex number $\omega$, $\Im \omega>0$, consider the quotient space of a neighborhood of the annulus $$A_\omega= \{z\in \bbC/\bbZ \mid 0\le \Im z \le \Im \omega\}$$ by the action of $f+\omega$, where $f+\omega$ is defined as
$$
(f+\omega)\colon z\mapsto f(z)+\omega\quad\mod\bbZ.
$$
This quotient space is a complex torus $T_{f+\omega}$. If we fix a lift $F$ of $f$ to the real line, this complex torus has naturally marked generators of the first homology group, namely  $\bbR/\bbZ$ and $[0, F(0)+\omega]$. Due to the Uniformization Theorem, there exists a uniquely defined complex number $\tau = \tau_F(\omega)\in \bbH$, called the \emph{modulus} of $T_{f+\omega}$, and a biholomorphism that takes this torus $T_{f+\omega}$ to the torus $T_\tau = \bbC/(\bbZ + \tau \bbZ)$, while taking the marked generators to the generators $\bbR/\bbZ$ and $\tau\bbR/\tau\bbZ$  of the torus $T_\tau$.

Consider the map $\tau_F\colon \omega\mapsto \tau_F(\omega)$, $\tau_F\colon \bbH\to \bbH$.
In \cite{Arn-english}, V.~Arnold posed a conjecture on a limit behavior of $\tau_F$ near the real axis: he conjectured that if $\rot f$ is Diophantine, then 
$$
\lim_{\eps\to 0} \tau_F(i\eps) = \rot F.
$$
Construction of the complex rotation number and Arnold's conjecture were strongly motivated by his work on analytic conjugacies of analytic circle maps to rotations. Namely, the uniformizing map of the torus $T_{f+\omega}$ conjugates $f+\omega$ to the translation $z\to z+\tau_F(\omega)$. If we assume that we can pass to a limit as $\omega=i\eps\to 0$, the limit map will conjugate the circle map $f$ to the translation by $\lim_{\eps\to 0}\tau_F(i\eps)$. 
 
The conjecture was proved by E.~Risler \cite{Ris} and V.~Moldavskij \cite{M-en} independently. 
This result justifies the term ``complex rotation number'' for $\tau_F$. 
 The limit behavior of $\tau_F$ on the real axis for non-Diophantine rotation numbers was further studied in a sequence of papers  \cite{YuIM}, \cite{NG2012-en}, \cite{XB_NG}, \cite{NGinters}, \cite{NGselfsim}. In particular, we have the following result:
\begin{theorem}[\cite{XB_NG}]\label{Cont_ext_theorem}
For any orientation-preserving analytic circle diffeomorphism $f\colon\bbR/\bbZ\to\bbR/\bbZ$ and its lift $F\colon\bbR\to\bbR$, the map $\tau_F$ is analytic in the upper half-plane and extends continuously to the real axis.
\end{theorem}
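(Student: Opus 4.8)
The plan is to prove the two assertions by quite different means: holomorphy on $\bbH$ is soft, and continuity of the boundary values on $\bbR$ is the real content. For holomorphy, I would pass to the coordinate $\zeta = e^{2\pi i z}$ on $\bbC/\bbZ$, in which $A_\omega$ becomes the round annulus $\{e^{-2\pi\Im\omega}\le|\zeta|\le 1\}$ and the gluing map $z\mapsto f(z)+\omega$ becomes $\zeta\mapsto e^{2\pi i\omega}\hat f(\zeta)$, where $\hat f$ is the univalent map near the unit circle induced by $f$. Thus $T_{f+\omega}$ is a fixed annular neighbourhood of the unit circle glued to itself by a map holomorphic in $\omega$, so $\{T_{f+\omega}\}_{\omega\in\bbH}$ is a holomorphic family of elliptic curves with holomorphically varying marked generators, and its period $\tau_F(\omega)$ is therefore holomorphic in $\omega$ — concretely, over a small disk around any $\omega_*$ one realizes $T_{f+\omega}$ as a quasiconformal deformation of $T_{f+\omega_*}$ with Beltrami coefficient holomorphic in $\omega$ and invokes Ahlfors--Bers. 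I would also record two a priori facts for later: the subannulus $\{\delta\le\Im z\le\Im\omega-\delta\}$ embeds conformally into $T_{f+\omega}$, giving $\Im\tau_F(\omega)\ge\Im\omega$; and a dual extremal-length/distortion estimate on the glued torus (using that $|f'|$ and $|(f^{-1})'|$ are bounded on $\bbR/\bbZ$) bounds $\tau_F$ on any bounded neighbourhood of a point of $\bbR$. Granting these, continuity up to $\bbR$ reduces to showing that the \emph{unrestricted} limit $\lim_{\omega\to\omega_0,\ \omega\in\bbH}\tau_F(\omega)$ exists for each $\omega_0\in\bbR$ (a routine diagonal argument then upgrades pointwise existence of boundary limits to continuity of the extension), and the behaviour of this limit splits according to whether $\alpha_0:=\rot(f+\omega_0)$ is irrational or rational — handled by conjugacy to a rotation versus by the dynamics of a periodic orbit.

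For $\omega_0$ with $\alpha_0$ irrational I expect $\tau_F(\omega)\to\alpha_0$, extending the Arnold--Risler--Moldavskij limit to all irrational $\alpha_0$. The point is that as $\Im\omega\to 0$ the torus is glued from an ever thinner annulus by a map tending to the circle diffeomorphism $f+\omega_0$, so one must conjugate $f+\omega_0$ to the rotation $R_{\alpha_0}$ in order to recognize the modulus. When $\alpha_0\in\mathcal H$ this is immediate from Theorem~\ref{th-Yoc}: the analytic conjugacy $f+\omega_0=h^{-1}\circ R_{\alpha_0}\circ h$ extends to an annulus and transports $T_{f+\omega}$ (for small $\Im\omega$) to a torus $O(\Im\omega)$-close to $\bbC/(\bbZ+(\alpha_0+i\Im\omega)\bbZ)$, whose modulus tends to $\alpha_0$. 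When $\alpha_0\notin\mathcal H$ one cannot linearize directly, so I would renormalize first: finitely many steps of continued-fraction renormalization bring $f+\omega_0$ close to a rotation $R_\beta$ with $\beta\in\mathcal B_C$, to which Theorem~\ref{th-Yoc-2} applies, and one then needs a renormalization identity expressing $\tau_F(\omega)$ through the complex rotation number of the renormalized map on a correspondingly rescaled annulus, so that near-linearity at the renormalized scale propagates back to the original scale. The delicate point is uniformity in the arithmetic type of $\alpha_0$ — notably Liouville $\alpha_0$, where the renormalizations need not remain precompact and one must exploit that $A_\omega$ is shrinking at the same time — which I expect to require quantitative cross-ratio and distortion estimates for analytic circle maps.

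For $\omega_0$ with $\alpha_0=p/q$, the map $(f+\omega)^q$ has an attracting (and a repelling) $q$-periodic cycle for $\omega$ in the interior of the corresponding plateau of the rotation-number function, degenerating to a parabolic cycle at its endpoints. Writing $\lambda(\omega)$ for the multiplier of the attracting cycle, I would use a linearizing coordinate at this cycle to build a fundamental domain for $T_{f+\omega}$ that concentrates near the cycle as $\omega\to\omega_0$, and show that $\tau_F(\omega)$ converges to an explicit expression in $\lambda(\omega_0)$, $p$ and $q$ (with imaginary part $-\tfrac{1}{2\pi q}\log|\lambda(\omega_0)|$); at the parabolic endpoints $\lambda\to 1$, the torus pinches and $\tau_F\to p/q$, and as $\omega$ sweeps the plateau these values trace the ``bubble'' attached at $p/q$. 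I expect the main obstacle to be precisely this rational analysis at the parabolic endpoints — a parabolic-implosion-type estimate in which the linearizing coordinate degenerates but the modulus must still be shown to converge — together with obtaining the irrational-case estimates uniformly in the arithmetic of $\alpha_0$; these two quantitative inputs, rather than the moduli-theoretic and normal-families bookkeeping around them, carry the real weight of the argument.
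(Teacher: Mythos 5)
The paper does not prove this statement: Theorem~\ref{Cont_ext_theorem} is imported verbatim from \cite{XB_NG} (Buff--Goncharuk, ``Complex rotation numbers'') and used as a black box, so there is no in-paper proof to compare against. The relevant comparison is therefore with \cite{XB_NG} itself, and there your plan diverges in an essential way. Your holomorphy-on-$\bbH$ argument via Ahlfors--Bers is fine and matches the standard route (it is essentially Risler's \cite[Ch.~2, Prop.~2]{Ris}, as the paper itself notes around Proposition~\ref{crot_analytic_dependence_prop}), and your treatment of the hyperbolic rational plateau via linearizing coordinates at the periodic cycle is also the right picture; compare Theorem~\ref{th-bubbles}, which realizes $\tau(F)$ as the modulus of a torus built from a fundamental domain threaded through the linearizing charts of the periodic orbits. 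But the heart of \cite{XB_NG} is a \emph{single uniform modulus estimate} for the glued torus, phrased in terms of the distortion $D_f=\int_{S^1}|f''/f'|\,dx$ (visible in the bound $q^{-2}D_f/4\pi$ of Theorem~\ref{bubble_theorem}), valid for every analytic circle diffeomorphism regardless of the arithmetic of its rotation number. It does not pass through linearization or renormalization at all for the irrational case; it proves directly that $\tau_F(\omega)\to\rot(f+\omega_0)$ by comparing the modulus of the nearly degenerate torus with the combinatorics of closest returns, using cross-ratio/distortion control that is uniform in the rotation number.

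This matters because the renormalization scaffolding in your plan has a gap that you flag but do not close, and it is not a peripheral technicality. If $\alpha_0$ is not a Brjuno number (in particular if it is Liouville), then no finite iterate of the Gauss map lands its rotation number in any fixed $\mathcal B_C$: the Brjuno/non-Brjuno dichotomy is invariant under $G$, and even for Brjuno-but-not-Herman $\alpha_0$ the Brjuno sums along the forward Gauss orbit need not stay bounded. So the step ``renormalize finitely many times to bring $f+\omega_0$ close to $R_\beta$ with $\beta\in\mathcal B_C$ and apply Theorem~\ref{th-Yoc-2}'' fails exactly on the hard set of rotation numbers, which is also precisely the set that Theorem~\ref{th-Yoc} cannot reach. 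The ``quantitative cross-ratio and distortion estimates'' you defer to as a patch for this case are, in \cite{XB_NG}, not a patch but the whole engine: they replace the linearization step entirely and give continuity on all of $\bbR$ in one pass, with the hyperbolic rational case handled separately via the fundamental-domain/Theorem~\ref{th-bubbles} picture and the parabolic endpoints by a pinching argument. Your plan would need to be reorganized around such a uniform estimate rather than around conjugacy to rotations; as written, it proves the statement only on a proper subset of rotation numbers.
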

We will use the same symbol $\tau_F$ for this continuous extension. That is, we will view $\tau_F$ as a map $\tau_F\colon\overline{\bbH}\to\overline{\bbH}$.
\begin{definition}
 The \emph{complex rotation number} of the lift $F$ of  a circle diffeomorphism $f$ is the limit
$$
\tau(F) = \lim_{\substack{\omega\in\bbH\\ \omega\to 0}}\tau_F(\omega) = \tau_F(0).
$$
\end{definition}

The boundary behavior of the complex rotation number is further described by the following theorem. 
Recall that a circle diffeomorphism is called \emph{hyperbolic} if it has a rational rotation number and the multipliers of all its periodic orbits are not equal to $\pm 1$. 

\begin{theorem}[\cite{XB_NG}]\label{bubble_theorem}
Let $f$ and $F$ be the same as in Theorem~\ref{Cont_ext_theorem}. The equality 
$$
\tau(F) = \rot F
$$
holds if and only if $f$ is not a hyperbolic diffeomorphism. Furthermore, if $f$ is hyperbolic and $\rot F = p/q$, then $\Im(\tau(F))>0$ and $\tau(F)$ is located within the disc of radius $q^{-2} \times D_f/4\pi$ tangent to $\bbR$ at $p/q$, where $D_f = \int_{S^1} |f''/f'| dx$ is the distortion of $f$.
\end{theorem}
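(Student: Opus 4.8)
The plan is to split into cases according to the nature of $\rot f$, after recording the model case of rigid rotations.

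\emph{Rotations and the irrational case.} For the standard lift $R_\alpha(x)=x+\alpha$ we have $R_\alpha+\omega=R_{\alpha+\omega}$, so $T_{R_\alpha+\omega}=\bbC/(\bbZ+(\alpha+\omega)\bbZ)$ with the marked generators sent to $\bbR/\bbZ$ and $(\alpha+\omega)\bbR/(\alpha+\omega)\bbZ$; hence $\tau_{R_\alpha}(\omega)=\alpha+\omega$ and $\tau(\text{lift of }R_\alpha)=\alpha$. By Lemma~\ref{analyt_conjugacy_lemma}, $\tau(F)=\rot F$ for every $f$ analytically conjugate to a rotation, in particular — via Theorem~\ref{th-Yoc} — for every $f$ whose rotation number is a Herman number. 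For an arbitrary irrational $\rot f=\alpha$ I would argue by approximation: the diffeomorphisms $f_t:=f+t$ have rotation number depending continuously and, near $t=0$, strictly monotonically on $t$, so $\rot f_t$ sweeps through Herman numbers as $t\to 0$; for such $t$ we already know $\tau(F_t)=\rot F_t$, and passing to the limit while using continuity of $F\mapsto\tau(F)$ (a refinement of Theorem~\ref{Cont_ext_theorem}) gives $\tau(F)=\alpha=\rot F$.

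\emph{Reduction in the rational case.} Suppose $\rot F=p/q$. Let $\ell_1=\bbR/\bbZ$ and $\ell_2=[0,F(0)+\omega]$ be the marked generators of $H_1(T_{f+\omega})$, and let $\mathcal A_\omega$ be the annulus obtained by cutting $T_{f+\omega}$ along the simple closed curve of primitive class $q\ell_2-p\ell_1$. Reading this in the flat model $\bbC/(\bbZ+\tau_F(\omega)\bbZ)$ via the standard formula for the modulus of a torus cut along a rational slope gives $\operatorname{mod}\mathcal A_\omega=\dfrac{\Im\tau_F(\omega)}{\lvert q\tau_F(\omega)-p\rvert^{2}}$. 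Since $\tau_F(\omega)\to\tau(F)$ by Theorem~\ref{Cont_ext_theorem}, a short computation with horoballs shows that, once one knows $\tau(F)\neq p/q$, the inclusion ``$\tau(F)$ lies in the disc of radius $q^{-2}D_f/4\pi$ tangent to $\bbR$ at $p/q$'' is equivalent to ``$\operatorname{mod}\mathcal A_\omega\to M$ with $M\ge 2\pi/D_f$'' (which in turn forces $\Im\tau(F)>0$), whereas ``$\tau(F)=p/q$'' is equivalent to ``$\operatorname{mod}\mathcal A_\omega\to+\infty$''. Finally, $\mathcal A_\omega$ is conformally a quotient annulus of the first-return dynamics of $(f+\omega)^{q}-p$ near $\bbR/\bbZ$, a map that converges as $\omega\to0$ to the circle diffeomorphism $g:=f^{q}-p$, whose fixed points are precisely the period-$q$ orbit of $f$ (with multipliers $\mu_i=(f^{q})'(x_i)$); so everything reduces to the behaviour of $\mathcal A_\omega$ as $\omega\to0$.

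\emph{The dichotomy and the sharp bound.} If $f$ is hyperbolic, all $\mu_i\ne1$: the quotient of a punctured neighbourhood of each fixed point of $g$ by $g$ is a torus of finite modulus $\lvert\log\mu_i\rvert/2\pi$, and a plumbing argument promotes this to the assertion that $T_{f+\omega}$ converges, as $\omega\to0$, to a non-degenerate complex torus $T_f$ (a ``hyperbolic implosion''). Hence $\tau(F)=\operatorname{mod}T_f\in\bbH$; this gives $\Im\tau(F)>0$ and $\tau(F)\ne p/q$, and in particular $\tau(F)\ne\rot F$, settling the ``only if'' part. If instead $\rot F=p/q$ but $f$ is not hyperbolic, then $\mu_i=1$ for some $i$, i.e.\ $g$ has a parabolic fixed point; the quotient of a Leau petal by $g$ is conformally $\bbC$, an infinitely long cylinder, so $\operatorname{mod}\mathcal A_\omega\to+\infty$ and therefore $\tau(F)=p/q=\rot F$. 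Together with the irrational case this proves the ``if and only if''. There remains, for hyperbolic $f$, the quantitative estimate $\liminf_{\omega\to0}\operatorname{mod}\mathcal A_\omega\ge 2\pi/D_f$. For this I would exhibit, for every small $\omega$, an embedded sub-annulus $B_\omega\subset T_{f+\omega}$ whose core has class $q\ell_2-p\ell_1$ and satisfies $\liminf_{\omega\to0}\operatorname{mod}B_\omega\ge 2\pi/D_f$; as $\operatorname{mod}\mathcal A_\omega$ is the extremal modulus in that class, this suffices. The construction of $B_\omega$ should read the distortion off $f$ directly: on a thin annular neighbourhood of $\bbR/\bbZ$ the map $f+\omega$ fails to be conformal only through the factor $f'$, whose total logarithmic variation equals exactly $D_f=\int_{S^1}\lvert f''/f'\rvert\,dx$, and a Gr\"otzsch-type extremal-length estimate turns this single quantity into a round sub-annulus of modulus controlled by $2\pi/D_f$.

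\emph{Main obstacle.} The crux is the rational case: identifying the cut torus $\mathcal A_\omega$ conformally with the quotient annulus of the $q$-th iterate of $f+\omega$ near the circle and controlling its modulus as $\omega\to0$ — that is, the hyperbolic and parabolic implosion analysis, including the convergence of $T_{f+\omega}$ to a non-degenerate torus when $f$ is hyperbolic — and, on top of that, extracting the \emph{sharp} constant $2\pi/D_f$ rather than a mere $O(1/D_f)$. It is exactly this precise constant that yields the radius $q^{-2}D_f/4\pi$, and getting it requires choosing $B_\omega$ so that the extremal-length loss is measured by precisely the total variation of $\log f'$.
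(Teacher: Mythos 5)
The reformulation via the cut annulus is correct: with $\mathcal A_\omega$ the extremal annulus in the class $q\ell_2-p\ell_1$ of $T_{f+\omega}$, one indeed has $\operatorname{mod}\mathcal A_\omega=\Im\tau_F(\omega)/|q\tau_F(\omega)-p|^2$, and this is equivalent to the horoball inclusion with the stated constant. The picture for the hyperbolic case — $T_{f+\omega}$ converging to a nondegenerate torus $T_f$ built from a curve passing below attractors and above repellers, whence $\Im\tau(F)>0$ — is the construction the paper itself records as Theorem~\ref{th-bubbles} (from~\cite{NG2012-en}).

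However, your treatment of the irrational case has a genuine gap. You invoke ``continuity of $F\mapsto\tau(F)$'' as a ``refinement of Theorem~\ref{Cont_ext_theorem},'' but that theorem gives only continuity of $\omega\mapsto\tau_F(\omega)$ up to $\omega=0$ for a \emph{fixed} $F$, and Proposition~\ref{crot_analytic_dependence_prop} gives analyticity in $(\omega,f)$ only when $\omega\in\bbH$ or when $f+\omega$ is hyperbolic. The step $\tau(F_{t_n})\to\tau(F)$ as $t_n\to 0$ is a joint-continuity statement at a boundary point, and uniform convergence $\tau_{F_{t_n}}\to\tau_F$ on compact subsets of $\bbH$ does \emph{not} imply convergence of the boundary values. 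Worse, continuity of $\tau(\cdot)$ at non-hyperbolic $f$ is essentially a consequence of the theorem (since there $\tau=\rot$, which is continuous), so the approximation-from-Herman-numbers argument is circular. The source~\cite{XB_NG} handles all non-hyperbolic $f$ (irrational and parabolic alike) directly by degeneration-of-moduli estimates as $\omega\to 0$, not by approximation in the family. Separately, the sharp-constant step is too vague to assess and contains an error of principle: the phrase ``$f+\omega$ fails to be conformal only through the factor $f'$'' is not meaningful — $f+\omega$ is holomorphic and hence conformal. Extracting the precise $2\pi/D_f$ from the total variation of $\log f'$ via an extremal-length estimate is exactly the technical crux of the quantitative bound, and the proposal does not engage with it beyond naming Gr\"otzsch.
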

It is insightful to consider behavior of the complex rotation number in families of analytic maps, similarly to the classical construction of Arnold's tongues for circle diffeomorphisms. 
\begin{definition}
Let $I\subset\bbR$ be a closed interval. We say that a one-parameter family of analytic orientation preserving circle diffeomorphisms $\mathcal F=\{f_t\}_{t\in I}$ is \emph{monotonic} if 
\begin{itemize}
	\item for any $x\in\bbR/\bbZ$ and $t\in I$, $f_t(x)$ depends smoothly on $t$, and $\frac{\partial f_t}{\partial t}(x)>0$, with one-sided derivatives taken at the endpoints of~$I$;

	\item there exists $\eps>0$, such that $f_t\in\mathcal D_\eps$ for any $t\in I$.
\end{itemize}
\end{definition}
Given a monotonic family of circle diffeomorphisms $\mathcal F=\{f_t\}_{t\in I}$, we let $\hat {\mathcal F}=\{F_t\}_{t\in I}$ denote a family of lifts of $f_t$ to the maps of the real line, so that $F_t(x)$ depends continuously on $t$, for any $t\in I$ and $x\in\bbR$. For any rational number $p/q$, define the p/q-\emph{mode-locked interval} by 
$$
I_{p/q, \hat{\mathcal F}} = \{t\in I \mid \rot(F_t)=p/q\}.
$$
It is obvious from monotonicity of $\mathcal F$ that $t\mapsto \rot F_t$ is a weakly increasing function, so $I_{p/q, \hat{\mathcal F}}$ is either an empty set, or a closed interval that possibly degenerates into a single point. (The latter happens only if $F_t^q - p = id$ for some $t\in I$.)

The boundary behavior of a map $t\to \tau(F_t)$ on the real axis produces a fractal-like set ``Bubbles''. 
\begin{definition}
For a monotonic family $\mathcal F=\{f_t\}_{t\in I}$ of analytic circle diffeomorphisms and a corresponding family of lifts $\hat{\mathcal F}=\{F_t\}_{t\in I}$, a \emph{$p/q$-bubble} $B_{p/q, \hat {\mathcal F}}$ of $\hat {\mathcal F}$ is
 the image under the map $t\mapsto \tau(F_t)$ of the set $I_{p/q, \hat {\mathcal F}}$.
 
 The set of all bubbles of $\mathcal F$ is defined as
$$
B(\mathcal F):=\bigcup_{p/q\in[0,1]\cap\bbQ} B_{p/q,\hat{\mathcal F}}.
$$

\end{definition}

Due to Theorem~\ref{bubble_theorem}, if  the set $I_{p/q, \hat{\mathcal F}}$ is nonempty and is contained in the interior of $I$, then the $p/q$-bubble of the family $\hat{\mathcal F}$ is the union of one or several curves in $\bbH$ that start and end at $p/q$. Each of these curves corresponds to an interval of hyperbolicity of $f_t$.  
Due to Proposition~\ref{crot_analytic_dependence_prop} below, the curves of the bubble are at least as smooth as the family $\hat{\mathcal F}$.

 It is natural to ask about fractal properties of the set $B(\mathcal F)$, in particular, about the asymptotic size of the bubble attached at $p/q$. This resembles questions on the size and scaling of the limbs of the Mandelbrot set (see \cite{Kapiamba} and references therein). 
 
Formally, let $D_{\eps, r}\subset \bbH$ be the disc of diameter $\eps$ that is tangent to $\bbR$ at $r$. Let the (hyperbolic) \emph{size} of a $p/q$-bubble be the diameter of the smallest possible disc  $D_{\eps, p/q}$ that contains this bubble.  The paper addresses the following question. 

\vskip 0.2 cm

\textbf{Question: what is the asymptotic size of the $p/q$-bubbles, as $p/q$ approaches a particular value?}

\vskip 0.2 cm

 Due to Theorem~\ref{bubble_theorem}, the size of a $p/q$-bubble is at most $C q^{-2}$, where the constant $C>0$ depends on the family $\mathcal F$. 

In Section \ref{sec-rational}, we show that this estimate is sharp when $p/q$ approaches a rational number. 
This is an easy corollary of the previous result \cite{NGselfsim} on self-similarity of $B(\mathcal F)$ near rational numbers.

The main result of the current paper shows that  this estimate  is not sharp when $p/q$ approaches an irrational number of bounded type. 
(The numbers of bounded type are defined in Sec.~\ref{sec-arithm}.)

\begin{theorem}[Main Theorem]
Let $\alpha\in \bbR\setminus \bbQ$ have a type bounded by $k$. 
Take any monotonic family of analytic circle diffeomorphisms $\mathcal F = \{f_t\}_{t\in I}$, let $\hat{\mathcal F}=\{F_t\}_{t\in I}$ be the family of lifts.  Assume that $t_0\in I$ is an interior point of $I$, such that $\rot (F_{t_0})=\alpha$.

Then any $p/q$-bubble of the family $f_t$ is   located in a disc of radius $$c\cdot  \dist (\alpha, p/q)^{\xi} \cdot q^{-2}$$ tangent to $\bbR$ at $p/q$, where $c = c(\alpha,\mathcal F)>0$ depends on $\alpha$ and the family $\mathcal F$, and $\xi>0$ depends only on $k$.
\end{theorem}
The following result estimates $\xi$ in a particular case.
\begin{proposition}\label{prop-gold}
 For a rotation number $\alpha=\phi = \frac{\sqrt{5}-1}{2} = [1, 1, 1, \dots]$, we have $\xi>1$.
\end{proposition}
 A stronger form of the Main Theorem appears in Sec.~\ref{sec-mainth}.

The proof of the Main Theorem relies on renormalization. Renormalizations of circle diffeomorphisms were widely used starting from the works of Yoccoz on conjugacies to rotations. However, the result on the hyperbolicity of the renormalization operator for circle diffeomorphisms is recent \cite{GY}. In fact, the first author got interested in renormalization theory (which eventually led to the paper \cite{GY}) after realizing, in discussions with the second author, that hyperbolicity of renormalization is needed to obtain the scaling of bubbles.

 The question on the sizes of bubbles near irrational numbers of unbounded type (Liouville points in particular) is widely open.

Another interesting open question is whether complex rotation numbers generalize to critical analytic circle maps and whether the known results on hyperbolicity of renormalization imply results on self-similarity of bubbles for critical circle maps.

\section{Preliminaries}
\subsection{Arithmetics}
\label{sec-arithm}
Let $G(x) = \{1/x\}$ be the Gauss map defined for all $x>0$, where the curly brackets denote the fractional part of the number. Given a real number $\alpha \in \bbR$, define the numbers $\alpha_{-1},\alpha_0,\alpha_1,\ldots\in\bbR$ inductively by
$$
\alpha_{-1}=1,\, \alpha_0=\alpha, \,\,\alpha_n=G(\alpha_{n-1})\text{ for }n\ge 1.
$$
Put $k_n=[1/\alpha_{n}]$, $n\geq 0$. For rational numbers $\alpha$, this sequence is finite. The numbers $k_n$ are the coefficients of the (finite or infinite) continued fraction expansion of $\alpha$:
$$
\alpha = \cfrac{1}{k_0 + \cfrac{1}{k_1+\dots}}.
$$
 We will abbreviate the notation for the continued fractions by writing
 $$\alpha=[k_0,k_1,k_2,\ldots].$$
Let
$$\frac{p_n}{q_n}=[k_0,\ldots,k_{n-1}].$$
The number $p_n/q_n$ is called the $n$-th convergent to $\alpha$. For irrational numbers $\alpha$, we have $\alpha = \lim_{n\to \infty} p_n/q_n$.   If $\alpha$ is rational, this sequence is finite and the last term $p_N/q_N$ coincides with $\alpha$.  In both cases, we will write $\alpha \sim \{p_n/q_n\}$ to indicate that the sequence of rational numbers $p_n/q_n$ is the sequence of the $n$-th convergents to $\alpha$. 
It is also convenient to define 
$$
p_0:=0\qquad\text{and}\qquad q_0:= 1.
$$

An irrational number $\alpha$ is called a \textit{number of bounded type} if the coefficients $k_n$ of its continued fraction expansion  are bounded (i.e., $\sup_{n\in\bbN} k_n<\infty$). More specifically, given a constant $k\ge 1$, an irrational number $\alpha$ is \textit{of type bounded by $k$}, if $\sup_{n\in\bbN} k_n\le k$. All numbers of bounded type are Herman numbers. 

\subsection{Complex rotation numbers: invariance and dependence on parameters}

Here we collect several useful properties of complex rotation numbers. 

\begin{lemma}\label{analyt_conjugacy_lemma}
For any two analytically conjugate circle diffeomorphisms $f_1$ and $f_2$, and their corresponding analytically conjugate lifts $F_1$ and $F_2$, we have $\tau(F_1)=\tau(F_2)$.
\end{lemma}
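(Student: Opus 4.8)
The plan is to transport the torus $T_{f_1+\omega}$, by means of the conjugacy, onto the annulus picture that defines $T_{f_2+\omega}$, to observe that this changes the gluing map only by an amount tending to $0$ with $\omega$, and to conclude that the two moduli share the same boundary value at $\omega=0$. Concretely, write $f_2=h\circ f_1\circ h^{-1}$ and $F_2=H\circ F_1\circ H^{-1}$ with $H(x+1)=H(x)+1$, and extend $h$ to a biholomorphism from a complex neighbourhood $U$ of $\bbR/\bbZ$ in $\bbC/\bbZ$ onto $h(U)$, with $h(\bbR/\bbZ)=\bbR/\bbZ$. For $\omega\in\bbH$ close to $0$ the neighbourhood of $A_\omega$ used to build $T_{f_1+\omega}$ lies in $U$, so $h$ carries it onto a neighbourhood $W_\omega$ of $h(A_\omega)$ and conjugates $f_1+\omega$ to $g_\omega:=h\circ(f_1+\omega)\circ h^{-1}$; thus $T_{f_1+\omega}$ is biholomorphic to the torus $W_\omega/\langle g_\omega\rangle$, and since $h$ fixes $\bbR/\bbZ$ this biholomorphism sends the marked generators to $\bbR/\bbZ$ and to the $h$-image of $[0,F_1(0)+\omega]$, so $\tau_{F_1}(\omega)$ is the modulus of $W_\omega/\langle g_\omega\rangle$. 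The point is that $g_\omega$ is \emph{not} $f_2+\omega$: one computes $g_\omega=(f_2+\omega)+E_\omega$ with $E_\omega(w)=h\bigl(h^{-1}(f_2(w))+\omega\bigr)-h\bigl(h^{-1}(f_2(w))\bigr)-\omega$, which is holomorphic on a neighbourhood of $\bbR/\bbZ$ independent of small $\omega$ and satisfies $\|E_\omega\|_{C^1}=O(|\omega|)$, while the boundary curve $g_\omega(\bbR/\bbZ)=h(\bbR/\bbZ+\omega)$ tends to $\bbR/\bbZ$.

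The heart of the argument is a uniform continuity statement: the modulus of the complex torus obtained by gluing a fixed-size neighbourhood of $\bbR/\bbZ$ along a map $G$ depends continuously on $G$ (in, say, the $C^1$ topology near $\bbR/\bbZ$), uniformly as $\omega\to0$ and $G\to f_i$ --- this is, in essence, the estimate that makes Theorem~\ref{Cont_ext_theorem} work. Granting it, since $g_\omega-(f_2+\omega)=E_\omega\to0$ and both gluings tend to $f_2$, the modulus of $W_\omega/\langle g_\omega\rangle$ and that of $T_{f_2+\omega}$ differ by $o(1)$, i.e.\ $\tau_{F_1}(\omega)-\tau_{F_2}(\omega)\to0$ as $\omega\to0$. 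By Theorem~\ref{Cont_ext_theorem} the two sides converge separately to $\tau(F_1)$ and $\tau(F_2)$, hence $\tau(F_1)=\tau(F_2)$. (Joining $h$ to the identity through an analytic isotopy and trying to make the resulting family of complex rotation numbers locally constant is a tempting alternative, but it only reproduces the same continuity estimate for conjugacies near the identity.)

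The main obstacle is exactly this uniform continuity, concretely the construction, for each small $\omega$, of a quasiconformal identification of $W_\omega/\langle g_\omega\rangle$ with $T_{f_2+\omega}$ that respects the marked homology generators and has dilatation $1+o(1)$. Interpolating between the two gluings across $A_\omega$ itself does \emph{not} suffice: $E_\omega$ has size of order $|\omega|$, comparable to the height of $A_\omega$, so such an interpolation carries a definite amount of dilatation. One must instead spread the correction over a collar of $\bbR/\bbZ$ of fixed width, where the $o(1)$-perturbation of the gluing becomes conformally negligible, and then verify that such an enlarged collar still presents each torus faithfully and that the interpolation is compatible with the identification the gluing imposes on that collar. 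This is precisely where the geometric control on the annuli $A_\omega$ and on the maps $f_i+\omega$ near $\bbR/\bbZ$ established in \cite{XB_NG} is needed.
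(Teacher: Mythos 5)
Your approach genuinely differs from the paper's; the paper simply refers to \cite{NGinters}[Lemma~8], and the argument there (and the shortest route available with the tools already in the paper) bypasses the $\omega\to 0$ limit entirely by case analysis. By Theorem~\ref{bubble_theorem}, either $f_1$ (hence also $f_2$) is not hyperbolic --- in which case $\tau(F_i)=\rot F_i$ and one is done because the rotation number is a conjugacy invariant --- or $f_1$ is hyperbolic, and then Theorem~\ref{th-bubbles} gives $\tau(F_i)$ as the modulus of $E_\gamma(F_i)$ for any suitable curve $\gamma$. In the hyperbolic case the biholomorphic conjugacy $h$ (extended to a neighbourhood of $\bbR/\bbZ$) sends periodic points to periodic points of the same type, pushes each linearizing chart $\psi_j$ to the chart $\psi_j\circ h^{-1}$ for $f_2$, hence sends circular arcs to circular arcs, and preserves the ``above/below'' structure; thus $h(\gamma)$ is suitable for $F_2$, and $h$ induces a marked biholomorphism $E_\gamma(F_1)\to E_{h(\gamma)}(F_2)$. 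The equality of moduli follows with no limiting or quasiconformal argument at all.

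In your argument the step you label ``the heart'' is a genuine gap, not a granted background fact. You need continuity of the torus modulus with respect to the gluing map $G$, \emph{uniformly} as $\omega\to 0$ (i.e.\ an equicontinuity statement along the degenerating family of annuli). Theorem~\ref{Cont_ext_theorem} is a weaker statement: it is continuity in $\omega$ of $\tau_F(\omega)$ for a \emph{fixed} $f$, with the gluing varying only through the additive shift $\omega$, not a statement about a gluing $g_\omega$ that is itself a nontrivial $\omega$-dependent perturbation of $f_2+\omega$. The quantitative form you would need --- a $(1+o(1))$-quasiconformal identification of $W_\omega/\langle g_\omega\rangle$ with $T_{f_2+\omega}$ respecting the marked generators, uniformly down to $\omega=0$ --- is not a corollary one can simply ``grant''; establishing it essentially amounts to redoing the renormalization estimates of \cite{XB_NG} in a two-parameter setting, and you correctly observe that the naive interpolation across $A_\omega$ fails because $E_\omega$ is of the same order $|\omega|$ as the height of $A_\omega$. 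Spreading the correction over a collar of fixed width is the right instinct, but you stop short of constructing that collar presentation and verifying compatibility with the gluing, which is precisely where the difficulty lies. So as written the proposal identifies the obstacle but does not resolve it, and the resolution would be far heavier than the short case analysis above.

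One smaller point: when you compute $E_\omega$ you write the perturbation additively, but the relevant closeness is on the level of the induced gluing of the quotient annulus, not pointwise closeness of the two circle maps; even after fixing that, the degenerate limit (non-hyperbolic $f$, modulus tending to the real axis) is exactly the regime where $o(1)$ dilatation control must be converted to Euclidean $o(1)$ control on $\tau$, and that conversion has to be argued, not assumed.
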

For the proof of Lemma~\ref{analyt_conjugacy_lemma}, see~\cite[Lemma~8]{NGinters}.

For $\eps>0$, let $\Pi_\eps = \{z\in \bbC/\bbZ \mid -\eps<\Im z < \eps\}$ be an equatorial annulus on the cylinder $\bbC/\bbZ$.
Let  $\mathcal D_{\eps}$ be the set of all bounded analytic maps $f\colon \Pi_\eps\mapsto\bbC/\bbZ$ that are defined in $\Pi_\eps$ and extend continuously to the boundary. In Section~\ref{Banach_space_subsec} we will see that $\mathcal D_{\eps}$ has a structure of an affine complex Banach manifold. 
We will need the following proposition on the analytic dependence of $\tau_F$ on $F$.

\begin{proposition}\label{crot_analytic_dependence_prop}
Let $f$ and $F$ be the same as in Theorem~\ref{Cont_ext_theorem}, and let $\eps>0$ be such that $f\in \mathcal D_\eps$. Fix $\omega$ so that
\begin{itemize}
 \item either $\omega\in\bbH$,
 \item or $\omega\in\bbR$ and $f+\omega$ is a hyperbolic circle diffeomorphism.
\end{itemize}
Then the correspondence $(\omega, f)\mapsto \tau_F(\omega)$ (with the choice of $F$ that depends continuously on $f$) extends to a complex analytic map $T$ on some sufficiently small neighborhood $D\times \mathcal U\subset \bbC \times \mathcal D_\eps$ of $(\omega,f)$.
\end{proposition}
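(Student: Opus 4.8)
The value $\tau_F(\omega)$ is, by definition, the modulus of the complex torus $T_{f+\omega}$ equipped with its marked generators $\bbR/\bbZ$ and $[0,F(0)+\omega]$. So the plan is to exhibit, on a small neighborhood of $(\omega,f)$ in $\bbC\times\mathcal D_\eps$, a \emph{holomorphic family} of marked complex tori that restricts to $\{T_{f'+\omega'}\}$ wherever the latter is defined, and to take $T$ to be its period (modulus) map; such a period map is automatically holomorphic, and it coincides with $\tau_{F'}(\omega')$ on the common domain. When $\omega$ is real, the extension of this coincidence to the full complex neighborhood will rely on continuity of $\tau_F$ on $\overline{\bbH}$ from Theorem~\ref{Cont_ext_theorem}.

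Suppose first $\omega\in\bbH$. Then, for $(\omega',f')$ near $(\omega,f)$, the torus $T_{f'+\omega'}$ is the quotient of the annulus in $\bbC/\bbZ$ between $\bbR/\bbZ$ and $(f'+\omega')(\bbR/\bbZ)$ by the identification $z\sim f'(z)+\omega'$; this identification is a free, properly discontinuous biholomorphism of a neighborhood of one boundary circle onto a neighborhood of the other, and it depends holomorphically on $(z,\omega',f')$. Hence the total space $\{(\omega',f',[z])\}$ is a complex manifold, the projection to the parameter is a holomorphic submersion with elliptic fibers, and the marked generators vary holomorphically, so we have a holomorphic family of marked elliptic curves. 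Its period map is holomorphic: trivialize the family $C^\infty$-ly over the base by maps depending holomorphically on the base parameter, which turns the fiberwise complex structures into an $L^\infty$-holomorphic family of Beltrami coefficients on a fixed torus, and then apply the Ahlfors--Bers theorem on the holomorphic dependence of solutions of the Beltrami equation on a holomorphic parameter. (Equivalently, track the period $\int_\gamma\eta$ of the holomorphic $1$-form $\eta$ on the torus normalized by $\int_{\bbR/\bbZ}\eta=1$.) This produces $T$ near $(\omega,f)$, and $T=\tau_{F'}(\omega')$ there by construction.

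Now suppose $\omega\in\bbR$ and $g:=f+\omega$ is hyperbolic with $\rot g=p/q$. Here $A_\omega$ collapses to $\bbR/\bbZ$, the naive quotient of a neighborhood of $\bbR/\bbZ$ by $g$ fails to be Hausdorff, and the limiting torus $T_{f+\omega}$ must be assembled from the hyperbolic dynamics instead. The map $g^q$ has finitely many periodic points on $\bbR/\bbZ$, all with multiplier $\ne 1$; since hyperbolicity is an open condition, for $(\omega',f')$ near $(\omega,f)$ the map $(f'+\omega')^q$ still has these as hyperbolic fixed points, with multipliers and Koenigs linearizing coordinates depending holomorphically on $(\omega',f')$. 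Assembling the Koenigs charts around the periodic orbits of $(f'+\omega')^q$, glued by $f'+\omega'$, yields a complex torus $\mathcal T_{\omega',f'}$ with marked generators; one checks that for $\omega'\in\bbH$ this agrees, compatibly with the marking, with the torus $T_{f'+\omega'}$ of the previous paragraph (the thin annulus between $\bbR/\bbZ$ and $(f'+\omega')(\bbR/\bbZ)$ retracts, as $\omega'\to\omega$ in $\bbH$, onto a fundamental domain built from these charts), so the two constructions fuse into a single holomorphic family over a full complex neighborhood of $(\omega,f)$. Its period $T$ is holomorphic there and equals $\tau_{F'}(\omega')$ for $\omega'\in\bbH$; since both $T$ and, by Theorem~\ref{Cont_ext_theorem}, $\tau_{F'}$ are continuous on $\overline{\bbH}$, the equality extends to the real boundary wherever $f'+\omega'$ is hyperbolic, which is exactly the assertion.

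The main obstacle is the hyperbolic case: pinning down the correct model for the limiting torus $T_{f+\omega}$ in terms of the complexified hyperbolic periodic orbits of $(f+\omega)^q$, verifying that the gluing of Koenigs charts is stable under small holomorphic perturbations of $(\omega,f)$ and holomorphic in them, and checking compatibility with the genuine tori obtained for $\omega\in\bbH$ so that everything fuses into one holomorphic family whose period agrees with $\tau_F$. The remaining ingredients --- holomorphic dependence of hyperbolic fixed points, multipliers and linearizers on parameters; the Ahlfors--Bers/period argument; and the continuity argument identifying $T$ with $\tau_F$ --- are routine once this model is in hand. (If one only wished to deduce joint holomorphy from separate holomorphy, Hartogs' theorem in the Banach setting would apply, given local boundedness of $\tau_F$ and holomorphy in $\omega$ from Theorem~\ref{Cont_ext_theorem}; but establishing holomorphy in $f$, especially near a real hyperbolic $\omega$, is itself the heart of the matter.)
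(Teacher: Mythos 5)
Your proposal matches the paper's approach. The paper simply cites Risler's Proposition~2 (analytic dependence of the torus modulus on the gluing data, proved via Ahlfors--Bers) for the case $\omega\in\bbH$, and cites \cite{NG2012-en} for the reduction of the hyperbolic real-$\omega$ case to the same annulus-with-holomorphic-gluing picture via fundamental domains bounded by curves passing through the Koenigs linearizing charts of the periodic orbits (this is precisely the ``suitable curve'' construction later recorded in the paper as Theorem~\ref{th-bubbles}). Your reconstruction supplies exactly these two ingredients plus the continuity argument from Theorem~\ref{Cont_ext_theorem} to glue them together, so the route is the same; the portion you flag as ``the main obstacle'' is the content of \cite{NG2012-en}, which the paper likewise delegates rather than reproves.
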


This proposition was essentially proved in \cite[Chapter 2, Proposition 2]{Ris}. Namely, Proposition 2 from \cite{Ris} states that if a complex torus is glued from the annulus in $\bbC/\bbZ$, and the gluing depends analytically on the parameters, then the  modulus of the resulting complex torus also depends analytically on the parameters. The proof is based on the Ahlfors-Bers theorem on existence and analytic dependence of solutions of Beltrami equations with respect to  additional parameters.  This immediately implies the case $\omega\in \bbH$, and the reduction for hyperbolic circle diffeomorphisms is contained in \cite{NG2012-en}.

\section{Scaling of  bubbles near rational points}
\label{sec-rational}

In \cite{NGselfsim}, it was proved that for a generic monotonic family $\mathcal F =\{f_t\}_{t\in I}$ of analytic circle diffeomorphisms and its lifts $\hat{\mathcal F}=\{F_t\}_{t\in I}$, the set of all bubbles $B(\mathcal F)$ has a limiting shape near any rational point $k/l$, when viewed in the appropriate (M\"obius) chart. In this section, we formulate this result for the right semi-neighborhood of $k/l=0$, and explain how this implies that the $p/q$-bubble has the size $\sim cq^{-2}$ when $p/q\to k/l$, where $c$ depends on $k/l$ and the family $\mathcal F$.

Suppose that the map $f_0$ in the family $\mathcal F=\{f_t\}$ has a single quadratic parabolic fixed point at $0$: $f_0(0)=0$, $f_0'(0)=1$, $f_0''(0)>0$. Note that monotonicity of the family $\mathcal F$ implies that for arbitrarily small $t>0$, the parabolic fixed point at $0$ splits into two distinct complex conjugate fixed points of $f_t$. Let $\Psi^-$, $\Psi^+$ be Fatou coordinates for $f_0$ in the right and left semi-neighborhoods of $0$ respectively. Note that $\Psi^-$ extends to a neighborhood containing the interval $(-1, 0) $ and $\Psi^+$ extends to a neighborhood containing the interval $(0,1)$  via the dynamics of $f_0$. Let $\mathbf K = \Psi^{-}((\Psi^+)^{-1}(z)-1)$ be the transition map between the Fatou coordinates. Note that $\mathbf K$ is a circle map. The family of circle maps $\mathcal K = \{x\mapsto \mathbf K(x)+a\}_{a\in [0,1]}$ coincides with a well-known family of Lavaurs maps (``maps through the eggbeater'') written in the chart $\Psi^-$.  Generically, $\mathbf K$ is not a rotation. 

 Let $G_k(z)=1/z-k$; this is the analytic extension of the corresponding branch of the Gauss map $G\colon x\mapsto \{1/x\}$.
 
\begin{theorem}[\cite{NGselfsim}]
\label{th-selfsim}
 For a generic monotonic family $\mathcal F = \{f_t\}_{t\in I}$ of circle diffeomorphisms as above, 
  the countable union of analytic curves $$-G_n\left(\bigcup_{ \frac ab\in  [\frac 1n, \frac 1{n+1}]\cap\bbQ} B_{a/b, \hat{\mathcal F}}\right) $$ tends uniformly to the set $B(\mathcal K)$ of all bubbles of the family $\mathcal K = \{x\mapsto \mathbf K(x)+a\}_{a\in [0,1]}$ as $n\to \infty$.
  
  In particular, for any $p/q\in [0,1]\cap \bbQ$, the curves $-G_n( B_{q/(nq+p),  f_{t}}) $ tend uniformly to the $p/q$-bubble of the family $\mathbf K+a$ as $n\to \infty$. 
\end{theorem}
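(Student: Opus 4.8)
The plan is to recognize the Möbius map $-G_n$, applied to the bubbles attached over $[\tfrac1{n+1},\tfrac1n]$, as the $n$-th step of the continued-fraction (Gauss) renormalization of the family $\mathcal F$, and then to identify the limit of that renormalization, as $n\to\infty$, with the Lavaurs family $\mathcal K=\{\mathbf K+a\}_{a\in[0,1]}$ by parabolic implosion. Fix lifts so that $\rot F_0=0$; by monotonicity $\rot F_t$ is small for $t$ near $0$, positive for $t>0$.

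The reduction of one $-G_n$-image to a single renormalized bubble has two ingredients. First, when $\rot F_t\in[\tfrac1{n+1},\tfrac1n]$ --- i.e., the first continued-fraction digit of $\rot F_t$ is $n$ --- there is a renormalization $\mathcal R_nf_t$ with two faces: dynamically it is a rescaled high iterate of $f_t$ near the parabolic point $0$ (essentially $f_t^{\,n}$ read in a chart of size $\sim 1/n$ adjacent to $0$), and on the level of complex tori it re-marks $T_{F_t+\omega}$ by the element of $SL(2,\bbZ)$ acting on the modulus by $\tau\mapsto n-1/\tau=-G_n(\tau)$. From the second description, $\rot(\mathcal R_nf_t)=-G_n(\rot F_t)\in[-1,0]$ --- so a parameter $t$ with $\rot F_t=q/(nq+p)$, $p/q\in[0,1]$, is a parameter of the renormalized family with rotation number $-p/q$ --- and, at $\omega=0$, $\tau(\mathcal R_nF_t)=-G_n(\tau(F_t))$; this is the source of the sign in front of $G_n$. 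Hence $-G_n\bigl(B_{q/(nq+p),F_t}\bigr)$ is precisely the $p/q$-bubble of the renormalized family. Making this rigorous, in particular at hyperbolic and parabolic real parameters, uses Proposition~\ref{crot_analytic_dependence_prop} and Lemma~\ref{analyt_conjugacy_lemma}.

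The heart of the argument is the convergence of the renormalized families to $\mathcal K$. Reparametrize $\mathcal R_nf_t$ by a ``Lavaurs phase'' $a=a_n(t)$ chosen so that $\rot(\mathbf K+a)$ matches $-G_n(\rot F_t)$ and so that $a_n$ runs over $[0,1]$ as $\rot F_t$ runs over $[\tfrac1{n+1},\tfrac1n]$; then the reparametrized family converges to $\{\mathbf K+a\}_{a\in[0,1]}$ uniformly on a fixed annulus $\Pi_\eps$ as $n\to\infty$. This is a parabolic-implosion estimate in the spirit of Lavaurs, Douady--Hubbard and Shishikura: as $t\to0$ with the period $b\to\infty$, the $n$-fold iterate of $f_t$ carrying a point through the corridor near the splitting parabolic point, expressed in perturbed Fatou coordinates for $f_t$ relative to the Fatou coordinates $\Psi^\pm$ of $f_0$, converges to the Lavaurs map of phase $a$, which in the chart $\Psi^-$ is $\mathbf K+a$, with $a$ recording the rescaled transit time. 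One also needs the $\mathcal R_nf_t$ to lie in $\mathcal D_\eps$ with $\eps$ independent of $n$. Given this uniform convergence of circle maps, the continuity of $\tau$ (Theorem~\ref{Cont_ext_theorem}) together with its analytic dependence on the map inside $\bbH$ and at hyperbolic real parameters (Proposition~\ref{crot_analytic_dependence_prop}) yields convergence of the $\tau$-values; with the convergence of the intervals $I_{q/(nq+p),F_t}$ (rescaled by $a_n$) to $I_{p/q,\mathbf K+a}$, this gives, for each fixed $p/q$, $\,-G_n(B_{q/(nq+p),F_t})\to B_{p/q,\mathbf K+a}$ uniformly --- the ``in particular'' clause.

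To upgrade this to Hausdorff convergence of the full unions onto $B(\mathcal K)$, invoke the a priori bound of Theorem~\ref{bubble_theorem}: $B_{q/(nq+p),F_t}$ lies in the disc of diameter $O\bigl((nq+p)^{-2}\bigr)$ tangent to $\bbR$ at $q/(nq+p)$, hence its $-G_n$-image lies in a disc of diameter $O(q^{-2})$ tangent to $\bbR$ at $-p/q$, uniformly in $n$, and likewise for $B_{p/q,\mathbf K+a}$. So for any $\delta>0$ only the finitely many bubbles with $q\le Q(\delta)$ need be tracked, the remaining ones on both sides lying uniformly $\delta$-close to their (dense) attaching points on $\bbR$; for the tracked ones the per-$p/q$ convergence suffices. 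Genericity of $\mathcal F$ enters to guarantee that $\mathbf K$ is not a rotation, so that $B(\mathcal K)$ is a non-degenerate union of loops, and (via a transversality condition) that the intervals $I_{p/q,\mathbf K+a}$ and $I_{q/(nq+p),F_t}$ do not degenerate, so the bubbles are honest analytic curves. I expect the main obstacle to be exactly the uniform parabolic-implosion estimate: controlling the renormalization convergence simultaneously for all sub-bubbles --- including those over the endpoints $\tfrac1{n+1},\tfrac1n$, which correspond to $p/q$ near $0,1$, where $\mathcal K$ has its own parabolic degenerations --- and keeping the error uniform over $\Pi_\eps$ down to the real boundary, where $\tau_F$ is merely continuous.
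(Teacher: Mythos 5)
Your proposal follows the same route as the paper's sketch and the proof in \cite{NGselfsim}: near-parabolic renormalization acting as $-G_n$ on complex rotation numbers (cf.\ Theorem~\ref{th-renorm-bubbles}, which realizes the same Möbius action for the continued-fraction renormalization) and parabolic implosion to show the renormalized families converge to the Lavaurs family $\mathcal K=\{\mathbf K+a\}$. The points you flag as the main obstacle --- uniform control of the parabolic-implosion estimate across the whole interval of phases, together with the a~priori disc bounds from Theorem~\ref{bubble_theorem} to tame the small bubbles --- are indeed the technical content of \cite{NGselfsim}.
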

The proof is based on the near-parabolic renormalization for the circle map $f_t$, $t\approx 0$. Similarly to Theorem \ref{th-renorm-bubbles} proved below, application of the renormalization acts as $-G_n$ on the complex rotation number (the discrepancy with Theorem \ref{th-renorm-bubbles} is due to a different choice of orientation when rescaling the fundamental domain). Since near-parabolic renormalizations of $f_t$ tend to the family of Lavaurs maps $\mathbf K+a$ as $t\to 0$, the complex rotation numbers in the renormalized family tend to the complex rotation numbers for  $\mathbf K+a$.

An analogous statement holds in a neighborhood of each rational point $k/l$; the map $z\to -G_n(z)$ in this case should be replaced by a M\"obius map from $PSl(2, \bbZ)$ that takes $k/l$ to infinity.

\begin{corollary}
\label{cor-geom}
1. In the assumptions of Theorem \ref{th-selfsim}, there exists a constant $c = c(\mathcal F)$ such that the $1/n$-bubble of the family $\mathcal F = \{f_t\}$ has size bounded below by $c n^{-2}$. 



2. There exist two  disks $D_1\subset D_2$ in $\bbH$ tangent to $\bbR$ at $0$ such that no $p/q$-bubble of the family $\mathcal F$ intersects   $D_1$ for $p/q\neq 0$, and there are infinitely many bubbles of $\mathcal F$ that intersect $D_2$.
\end{corollary}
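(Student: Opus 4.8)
The plan is to play the self-similarity Theorem~\ref{th-selfsim} against the uniform upper bound $\mathrm{size}(B_{p/q,F_t})\le Cq^{-2}$ of Theorem~\ref{bubble_theorem}. Throughout I would write $\phi_r(z)=|z-r|^2/\Im z$ for $z\in\bbH$, so that $D_{\eps,r}$ is the point $r$ together with $\{z\in\bbH:\phi_r(z)\le\eps\}$ and the size of a bubble attached at $r$ is the supremum of $\phi_r$ over that bubble. Two elementary observations are needed. First, for $r>0$ the closed discs $\overline{D_{\eps,r}}$ and $\overline{D_{\delta,0}}$ are disjoint exactly when $r^2>\eps\delta$ (compare centres and radii). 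Second, since $-G_n(z)=n-1/z$ and $\Im(-1/z)=\Im z/|z|^2$, one has the exact identities $\phi_{1/n}(z)=n^{-2}\phi_0(-G_n(z))$ and $\phi_0(z)=1/\Im(-G_n(z))$; in particular $-G_n$ carries $D_{\eps,1/n}$ onto $D_{\eps n^2,0}$, so $\mathrm{size}(B_{1/n,F_t})=n^{-2}\,\mathrm{size}\bigl(-G_n(B_{1/n,F_t})\bigr)$.

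Next I would extract a single point $z^*\in B_{0,\mathcal K}$ with $\Im z^*>0$, where $\mathcal K=\{\mathbf K+a\}_{a\in[0,1]}$ is the Lavaurs family. For generic $\mathcal F$ the map $\mathbf K$ is not a rotation, so $I_{0,\mathcal K}=\{a:\rot(\mathbf K+a)=0\}$ is a non-degenerate segment; since $\mathbf K$ is analytic with $\mathbf K'\not\equiv1$, the set $\{\mathbf K'=1\}$ is finite, hence for all but finitely many $a$ in the interior of $I_{0,\mathcal K}$ every fixed point of $\mathbf K+a$ has multiplier $\ne1$, i.e.\ $\mathbf K+a$ is a hyperbolic circle diffeomorphism; for such an $a$, Theorem~\ref{bubble_theorem} puts $z^*:=\tau(\mathbf K+a)$ in $\bbH$, and $z^*\in B_{0,\mathcal K}$ by the definition of the bubble.

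Now for Part~1: Theorem~\ref{th-selfsim} (Hausdorff convergence of $-G_n(B_{1/n,F_t})$ to $B_{0,\mathcal K}$) supplies points $w_n\in-G_n(B_{1/n,F_t})$ with $w_n\to z^*$; pulling back, $z_n:=(-G_n)^{-1}(w_n)\in B_{1/n,F_t}$ satisfies $\phi_{1/n}(z_n)=n^{-2}\phi_0(w_n)$, and since $\phi_0(w_n)\to\phi_0(z^*)>0$ this gives $\mathrm{size}(B_{1/n,F_t})\ge\tfrac12\phi_0(z^*)\,n^{-2}$ for all large $n$. For the finitely many smaller $n$ with $B_{1/n,F_t}$ nonempty, that bubble is (generically) a non-degenerate loop of strictly positive size, so taking $c=c(\mathcal F)$ to be the minimum of $\tfrac12\phi_0(z^*)$ and these finitely many numbers $n^2\,\mathrm{size}(B_{1/n,F_t})$ yields Part~1. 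For the ``upper disc'' of Part~2 the same points $z_n$ work: the second identity above gives $\phi_0(z_n)=1/\Im w_n\to1/\Im z^*$, so $z_n\in B_{1/n,F_t}\cap D_2$ for all large $n$, where $D_2:=D_{2/\Im z^*,\,0}$; hence infinitely many bubbles of $\mathcal F$ meet $D_2$.

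Finally, for the ``lower disc'': by Theorem~\ref{bubble_theorem}, $B_{p/q,F_t}\subset\overline{D_{Cq^{-2},p/q}}$ for every $p/q\in[0,1]\cap\bbQ$ in lowest terms, so if $p/q\ne0$ then $p\ge1$, whence $(p/q)^2\ge q^{-2}>Cq^{-2}\delta$ as soon as $\delta<1/C$; by the first observation $\overline{D_{Cq^{-2},p/q}}$ is then disjoint from $\overline{D_{\delta,0}}$, so no $p/q$-bubble with $p/q\ne0$ meets $D_1:=D_{\delta,0}$. Choosing $\delta<\min(1/C,\,2/\Im z^*)$ also makes $D_1\subset D_2$, completing the argument. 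The step I expect to require the most care is precisely the passage from the Hausdorff convergence of Theorem~\ref{th-selfsim} to the conformally natural quantities ``size'' and ``$z\in D_{\delta,0}$'': the function $\phi_0$ blows up as $z$ approaches $\bbR\setminus\{0\}$ while the bubbles touch $\bbR$ at their base point, so one cannot simply invoke continuity of the size functional — the device above of working with one fixed interior point $z^*$ of the limiting bubble, together with the exact (rather than asymptotic) identities for $-G_n$, is meant to bypass this. The remaining loose end, that the exceptional small-$n$ bubbles are non-degenerate for generic $\mathcal F$ (equivalently $f_t^n\ne\id$ for every $t\in I$), is a routine genericity statement.
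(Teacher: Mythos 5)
Your proof is correct and follows essentially the same route as the paper: both use Theorem~\ref{th-selfsim} to push the bubbles to the limiting bubble of the Lavaurs family for the lower bound, the $-G_n$-pushforward of tangent discs for the scaling, and the $Cq^{-2}$ upper bound from Theorem~\ref{bubble_theorem} for the inner disc $D_1$. Your fixed-interior-point device for extracting a lower bound from Hausdorff convergence is a somewhat more careful rendering of what the paper states briefly, but it is the same argument.
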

A similar statement holds near any rational point $k/l$ instead of $0$.

Note that Theorem \ref{bubble_theorem} implies the upper bound on the size of the bubble: the $1/n$-bubble has size at most $C/n^2$ for $C=C(\mathcal F)$. The first part of the Corollary means that the asymptotic size of $p/q$-bubbles near any rational point has order exactly  $c q^{-2}$.

\begin{proof}
1.
The previous theorem implies that the images of the bubbles $B_{1/n, \hat{\mathcal F}}$ under the maps  $-G_n$ tend to the $0$-bubble of the family $\mathcal K = \{\mathbf K+a\}$. Since generically, $\mathbf K$ is not a rotation, this bubble is non-degenerate and thus has a nonzero size $c$. 
Since the image of a disc $D_{c n^{-2}, 1/n}$ under $(-G_n)$  is   $D_{c, 0}$, the estimate follows.

2. Due to Theorem \ref{bubble_theorem}, the size of a $p/q$-bubble is at most $C q^{-2}$ where $C$ only depends on the family $f_t$. 
It is easy to check that the discs $D_{C q^{-2}, p/q}$ do not intersect the disc $D_1= D_{1/(2C), 0}$, which implies the first part of the statement.

As mentioned above, generically, the family $\mathbf K +a$ has a non-degenerate zero bubble. Choose $c$ so that this bubble intersects a half-plane $\{\Im z\ge c\}$. 
Then the images of the bubbles $B_{1/n, \hat{\mathcal F}}$ under $(-G_n)$ intersect a half-plane $\{\Im z\ge c/2\}$ for sufficiently large $n$, thus the bubbles $B_{1/n, \hat{\mathcal F}}$ intersect the disc $D_2 = \{\Im (-1/z) \ge c/2 \}$ for large $n$. This completes the proof of the second statement.

\end{proof}

\section{Refined version of the Main theorem}
\label{sec-mainth}

\begin{theorem}
\label{th-main}
	For any $k>0$, there exists a positive constant $\Lambda=\Lambda(k)<1$ with the following property.
	
	Take any monotonic family of analytic circle diffeomorphisms $\mathcal F = \{f_t\}_{t\in I}$ and a corresponding family of its lifts $\hat{\mathcal F}=\{F_t\}_{t\in I}$ that depends continuously on $t$. Fix any $\alpha$ of type bounded by $k$, and let $p_n/q_n$ be the sequence of its continued fractional convergents. Assume that $t_0\in I$ is an interior point of $I$, such that $\rot (F_{t_0})=\alpha$.
 
 Then there exists a constant $c=c(\alpha, \mathcal F)$ such that for any integer $r\ge 1$,
	every $p/q$-bubble of the family $f_t$ with
	$p/q\in [p_{r}/q_{r}, p_{r+1}/q_{r+1}]$ is located in a disc of radius $ c\Lambda^r q^{-2}$ tangent to $\bbR$ at $p/q$.
\end{theorem}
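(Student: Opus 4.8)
\emph{Plan.} Renormalize the family $r$ times: the $p/q$-bubble is carried, through a M\"obius map from $PSl(2,\bbZ)$, to a bubble of the $r$-th renormalized family, and Theorem~\ref{bubble_theorem} applied to that family gives the bound --- because the renormalized hyperbolic maps turn out to be $\delta(k)^{r}$-close to rotations, hence of distortion $O(\delta(k)^{r})$, for a constant $\delta(k)<1$.

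\emph{Renormalization input.} I would first recall the renormalization operator $\mathcal R$ for analytic circle diffeomorphisms, together with the following classical facts: (i) $\mathcal R$ is defined on maps of $\mathcal D_\eps$ with rotation number in $(0,1)$, shifts the rotation number by one continued-fraction step, and descends to monotonic families; on complex rotation numbers it acts through a M\"obius transformation of $PSl(2,\bbZ)$, carrying the $p/q$-bubble of a family to the bubble of the renormalized family attached at the image of $p/q$ --- this is Theorem~\ref{th-renorm-bubbles}; (ii) the circle of rotations $\mathcal C=\{R_\beta\}$ is $\mathcal R$-invariant and is a normally attracting invariant manifold, with uniform complex bounds (iterates staying defined on a fixed annulus $\Pi_{\eps'}$) and with normal contraction rate at most $\delta(k)<1$ over any orbit segment whose rotation numbers have type bounded by $k$; (iii) every analytic circle diffeomorphism with a Herman rotation number --- in particular $f_{t_0}$ --- lies in the basin of $\mathcal C$, so $\dist_{C_0(\Pi_{\eps'})}(\mathcal R^n f_{t_0},R_{\rot\mathcal R^n f_{t_0}})\le C\,\delta(k)^{n}$. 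Facts (ii)--(iii) are Yoccoz' linearization theorem together with a priori bounds.

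\emph{Reduction to a shallow bubble.} Fix $p/q\in[p_{r}/q_{r},p_{r+1}/q_{r+1}]$; assume its bubble is nonempty. Renormalizing the part of $\mathcal F$ near $t_0$ exactly $r$ times produces a monotonic family $\mathcal F^{(r)}$, realized on complex rotation numbers by a M\"obius map $M_r\in PSl(2,\bbZ)$ sending $\rot F_{t_0}=\alpha$ to a number again of bounded type (with bound depending only on $k$) and sending $p/q$ to a reduced fraction $\tilde p/\tilde q:=M_r(p/q)$. Since $M_r$ is unimodular and carries the coprime pair $(p,q)$ to $\pm(\tilde p,\tilde q)$, one has $|M_r'(p/q)|=q^{2}/\tilde q^{\,2}$; conjugating $M_r$ by $z\mapsto -1/(z-p/q)$ and $w\mapsto -1/(w-\tilde p/\tilde q)$, which send the discs tangent to $\bbR$ at $p/q$, resp.\ at $\tilde p/\tilde q$, onto upper half-planes, one obtains the exact statement that $M_r$ maps the disc of radius $\eps$ tangent to $\bbR$ at $p/q$ onto the disc of radius $q^{2}\eps/\tilde q^{\,2}$ tangent to $\bbR$ at $\tilde p/\tilde q$. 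Hence, by (i), the $p/q$-bubble of $\mathcal F$ lies in the disc of radius $\eps$ tangent at $p/q$ if and only if the $\tilde p/\tilde q$-bubble of $\mathcal F^{(r)}$ lies in the disc of radius $q^{2}\eps/\tilde q^{\,2}$ tangent at $\tilde p/\tilde q$.

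\emph{Key estimate and conclusion.} Let $g=\mathcal R^r f$ where $f$ is a hyperbolic map of $\mathcal F$ with $\rot f=p/q$. The set $\{t:\rot F_t=\alpha\}$ is the single point $\{t_0\}$ --- two strictly ordered circle diffeomorphisms cannot both have the Herman rotation number $\alpha$ --- so the hyperbolicity interval $I_{p/q,F_t}$ lies inside $\{t:\rot F_t\in[p_{r}/q_{r},p_{r+1}/q_{r+1}]\}$, which shrinks to $\{t_0\}$ as $r\to\infty$; thus for $r\ge r_0(\alpha,\mathcal F)$ the map $\mathcal R^{n_0}f$ already lies within $\eps_0$ of $\mathcal C$, and from then on (ii) applies, the rotation numbers along the orbit of $f$ for $n_0\le j\le r-1$ having type bounded by a constant depending only on $k$. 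Therefore $\dist(g,\mathcal C)\le C_1\delta(k)^{r}$; since for a map near $\mathcal C$ the nearest rotation has rotation number within $\dist(\cdot,\mathcal C)$ of the map's own rotation number $\tilde p/\tilde q$ (monotonicity of $\rot$ under pointwise $\le$), we get $\|g-R_{\tilde p/\tilde q}\|_{C_0(\Pi_{\eps'})}\le C_2\delta(k)^{r}$, whence by Cauchy estimates the distortion is $D_g=\int_{S^1}|g''/g'|\,dx\le C_3\delta(k)^{r}$. By Theorem~\ref{bubble_theorem} the $\tilde p/\tilde q$-bubble of $\mathcal F^{(r)}$ lies in the disc of radius $\tilde q^{-2}C_3\delta(k)^{r}/4\pi$ tangent at $\tilde p/\tilde q$, so by the equivalence above the $p/q$-bubble of $\mathcal F$ lies in the disc of radius $q^{-2}C_3\delta(k)^{r}/4\pi$ tangent at $p/q$. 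Taking $\Lambda=\delta(k)$ and $c=c(\alpha,\mathcal F)$ large enough to also cover the finitely many levels $r<r_0$ (where Theorem~\ref{bubble_theorem} alone suffices) completes the proof. The genuine difficulty is concentrated in the renormalization input of the second paragraph --- the family renormalization being well defined on a fixed annulus with a priori bounds for all iterates near $\mathcal C$, the normal contraction rate of $\mathcal C$ being governed only by $k$, and $f_{t_0}$ approaching $\mathcal C$ exponentially fast; once these are granted, the rest is the M\"obius bookkeeping above.
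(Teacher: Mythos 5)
Your plan is the paper's plan at the structural level: renormalize, track the complex rotation number through the M\"obius action (Theorem~\ref{th-renorm-bubbles} together with what is in effect Lemma~\ref{lem-PSL}), show the renormalized maps are exponentially close to rotations, convert that to a distortion estimate, and close with Theorem~\ref{bubble_theorem}. The M\"obius bookkeeping in your middle paragraph is correct and matches Lemma~\ref{lem-PSL}.

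However, the \emph{key estimate} paragraph contains a genuine gap, not merely an omitted proof. You assert that (ii) applies for $n_0\le j\le r-1$, ``the rotation numbers along the orbit of $f$ for $n_0\le j\le r-1$ having type bounded by a constant depending only on $k$.'' This is false. The rotation number of $\mathcal R^{j}f$ is $G^{j}(p/q)$ (up to the chosen power of the Gauss map per renormalization step), which is a rational number whose denominator collapses as $j$ approaches $r$; for $j$ near $r$ this rational may land anywhere in $[0,1]$ and in particular arbitrarily close to the forbidden set near small-denominator rationals, so $\mathcal R^{j}f$ exits any fixed neighborhood of $M_k$ before $j$ reaches $r$. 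Consequently the stable-direction contraction estimate you want cannot be applied all the way to $j=r-1$. The paper handles precisely this by introducing a buffer $N=N(\mathcal F)$ in Lemma~\ref{lem-arc}: bubbles rooted at level $r+N$ of the continued-fraction expansion yield orbits that remain in $\mathcal U$ for only about $r/(m+1)$ renormalization steps, and the slope bound of Lemma~\ref{lem-inclination} is what guarantees the normal distance actually decays along those steps. Your deduction ``$\dist(\mathcal R^{n_0}f,\mathcal C)\le\eps_0$, then iterate the contraction'' silently assumes the orbit stays in the region where the uniform contraction rate $\delta(k)$ is valid, and that assumption is exactly what fails at the last $O(1)$ levels and has to be compensated for by the buffer.

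Two smaller points. First, you treat the renormalized family $\mathcal F^{(r)}$ as monotonic; in general it is not, because the rescaling chart in the renormalization depends on $t$ in a nontrivial way. The paper substitutes the weaker but renormalization-invariant condition of transversality to the stable hyperplane $\{\int v=0\}$, proved in the unnamed lemma in Step~1. Second, you need the hyperbolicity (Theorem~\ref{th-renorm}) to hold, which requires the family to live in a strip $\Pi_h$ with $h$ large enough relative to $\Phi(\alpha)$; since the given family is only analytic on some $\Pi_{h_1}$, an initial widening step (the paper's $\mathcal R_{h_1,h,s}$) is needed. You omit this, though it is a less serious issue than the domain-control gap. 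Finally, your rate $\delta(k)$ is only the top stable multiplier; the paper's finer analysis via the inclination lemma produces the sharper rate $\mu=\tau^{l}/\lambda^{l}$ (ratio of stable contraction to unstable expansion), which is the content of Proposition~\ref{prop-golden}. Either constant is $<1$, so your choice would formally suffice for the theorem statement were the domain-control gap repaired, but it is not the estimate the paper actually establishes and uses.
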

As we will see below, $\Lambda$ is related to the two top eigenvalues of the renormalization operator over the orbit of the rigid circle rotation $R_\alpha$ under the renormalization. 

The above theorem implies the Main Theorem.

\begin{proof}[Reduction of the Main Theorem to Theorem \ref{th-main}]

 Using recurrent relations on the continued fractional convergents, we get $p_{n} = k_{n-1}p_{n-1}+p_{n-2} \le  kp_{n-1}+p_{n-2}$, $ q_{n}= k_{n-1}q_{n-1}+q_{n-2} \le  kq_{n-1}+q_{n-2}$, and thus $p_n,q_n$ grow at most exponentially fast: $q_n <  (k+1)^{n}$. 

We will use Theorem \ref{th-main} for the interval $[\frac{p_{r}}{q_{r}}, \frac{p_{r+2}}{q_{r+2}}]\subset [\frac{p_{r}}{q_{r}}, \frac{p_{r+1}}{q_{r+1}}]$. 
The distance $d=\dist(\alpha, p/q)$ for $$p/q\in \left[\frac{p_{r}}{q_{r}}, \frac{p_{r+2}}{q_{r+2}}\right]$$ is at least $|p_{r+2}/q_{r+2} - \alpha|$. Further, we can prove by induction that $$|p_{r+2}-\alpha q_{r+2}| = \alpha_0 \alpha_1 \dots \alpha_{r+1}  >( k+1)^{-r-2}, $$ thus $d>(k+1)^{-r-2}  (q_{r+2})^{-1}> (k+1)^{-2r-4}.$ We conclude that $r > -0.5\log_{k+1} d-2$.

According to the previous theorem, the size of a $p/q$-bubble that is rooted on a segment $ [p_{r+2}/q_{r+2}, p_{r}/q_{r}]$ is at most $$c(\mathcal F)(\Lambda(k))^r q^{-2} <  c(\mathcal F) (\Lambda(k))^{- 0.5\log_{k+1} d } q^{-2} = c(\mathcal F) d^\xi q^{-2}$$
where $\xi = -0.5\log_{k+1}\Lambda(k)$. We have $\xi >0  $ since $\Lambda(k)<1$. \end{proof}

Let us also derive Proposition \ref{prop-gold} on the value of $\xi$ for the golden ratio rotation number $\alpha=\phi=\frac{\sqrt{5}-1}{2}$. 
\begin{proof}[Proof of Proposition \ref{prop-gold}]
For this value of $\alpha$, the estimates from the previous proof can be improved as follows. Since $p_n, q_n$ are Fibonacci numbers, we have $q_n > c_1\phi^{-n}$, for some constant $c_1>0$, and $|p_{n}-\alpha q_n|=\phi^{n}$. Thus, using the same notation as above, we obtain $d > \frac{1}{c_1}\phi^{2r+4}$. Same computations as above, but with the improved inequality on $d$, imply $\xi = \log \Lambda / \log \phi^{2}$.

The value of $\Lambda$ for the golden ratio is estimated in Proposition \ref{prop-golden} below: $\Lambda<\phi^2$. Since both logarithms are negative, this implies $\xi>1$.  

\end{proof}

In Corollary \ref{cor-geom}, we obtained the geometric interpretation for the scaling of bubbles near rational points. 
We observe that Theorem \ref{th-main} implies quite a different behavior near bounded-type irrational points. 
\begin{corollary}
	Let an irrational number $\alpha$ and a family of circle diffeomorphisms $\mathcal F$ be the same as in Theorem~\ref{th-main}. Then for any disc $D_{\eps, \alpha}\subset\bbH$ tangent to the real line at $\alpha$, all $p/q$-bubbles of the family $\mathcal F$ for $|\alpha-p/q|$ sufficiently small do not intersect the disc $D_{\eps, \alpha}$.
	
\end{corollary}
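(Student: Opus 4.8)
The plan is to reduce the statement to an elementary disc‑geometry inequality, feed in the size bound of Theorem~\ref{th-main} with an optimal choice of renormalization level, and close the estimate using the Diophantine properties of bounded‑type numbers. First I would record the geometric fact: the disc tangent to $\bbR$ at a point $a$ with diameter $\delta$ is $\{\,z:|z-a-i\delta/2|\le\delta/2\,\}$, and two such discs, tangent at $a_1$ and $a_2$ with diameters $\delta_1,\delta_2$, have disjoint interiors precisely when $|a_1-a_2|^2>\delta_1\delta_2$. Applying this with $a_1=\alpha$, $\delta_1=\eps$, and with $a_2=p/q$ and $\delta_2$ the diameter of the disc supplied by Theorem~\ref{th-main}, it suffices to show that $|\alpha-p/q|^2>\eps\,\delta_2$ whenever $|\alpha-p/q|$ is small enough in terms of $\eps$.

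Second, I would choose the renormalization level optimally. Given $p/q$ near $\alpha$, let $r=r(p/q)$ be the largest index with $p/q\in[p_r/q_r,p_{r+1}/q_{r+1}]$; since these intervals are nested with $\alpha$ in their interiors and shrink to $\{\alpha\}$, we have $r\to\infty$ as $p/q\to\alpha$. By Theorem~\ref{th-main}, $\delta_2\le 2c\,\Lambda^r q^{-2}$, so the target becomes $|q\alpha-p|^2>2\eps c\,\Lambda^r$. Now I would bound $|q\alpha-p|$ from below in terms of $r$. Maximality of $r$ forces $p/q$ to lie in $[p_r/q_r,p_{r+2}/q_{r+2})$, hence — since $p_r/q_r$ and $p_{r+2}/q_{r+2}$ lie on the same side of $\alpha$ — on the far side of $p_{r+2}/q_{r+2}$ from $\alpha$; therefore $|\alpha-p/q|\ge|\alpha-p_{r+2}/q_{r+2}|=(\alpha_0\cdots\alpha_{r+1})/q_{r+2}$, while $1/(qq_r)\le|p/q-p_r/q_r|\le k_{r+1}/(q_rq_{r+2})$ gives $q\ge q_{r+2}/k_{r+1}\ge q_{r+2}/k$ (the degenerate case $p/q=p_r/q_r$ is separate and easier). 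Hence $|q\alpha-p|\ge k^{-1}\alpha_0\cdots\alpha_{r+1}=k^{-1}|q_{r+2}\alpha-p_{r+2}|$. Since $\alpha$ is of type bounded by $k$, the continued‑fraction recursion gives $q_n\le C_k\rho^{\,n}$ with $\rho=\rho(k)=\tfrac12(k+\sqrt{k^2+4})$, and consequently $|q_n\alpha-p_n|=1/(q_{n+1}+\alpha_{n+1}q_n)\ge c_k\rho^{-n}$; so $|q\alpha-p|^2\ge c'_k\rho^{-2r}$.

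Putting the pieces together, $|q\alpha-p|^2>2\eps c\,\Lambda^r$ holds for all sufficiently large $r$ — i.e.\ for $|\alpha-p/q|$ small — as soon as $\Lambda(k)<\rho(k)^{-2}$, and the corollary follows. The one real obstacle is this inequality: it is not a consequence of $\Lambda<1$ alone. It is equivalent to the assertion that the refined scaling exponent $\xi(\alpha)=-\tfrac12\log_{\rho_\alpha}\Lambda(k)$ — the exponent actually produced by the renormalization argument underlying Theorem~\ref{th-main}, of which the exponent in the Corollary on $\xi$ is a lossy form — satisfies $\xi(\alpha)>1$, and it reflects the fact that $\Lambda$ is governed by the top \emph{stable} eigenvalue of the renormalization operator while $\rho_\alpha^{2}$ bounds its \emph{unstable} (rotation‑number‑direction) eigenvalue. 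For the golden mean it is exactly Proposition~\ref{prop-golden} ($\mu<\phi^2$); for a general bounded‑type $\alpha$ it comes from the same spectral estimates, and everything else above is elementary. One may equally run the reduction through the Corollary on $\xi$: it places the $p/q$‑bubble in a disc of radius $\le c\,d^{\xi}q^{-2}$ with $d=\dist(\alpha,p/q)$, and the bounded‑type lower bound $q^{-2}\le d/\gamma$, $\gamma=\gamma(\alpha)>0$, then yields a disc of radius $O(d^{1+\xi})$ tangent at $p/q$, so that disjointness from $D_{\eps,\alpha}$ reduces to $d^{\,1-\xi}\gtrsim\eps$ — again true for $d$ small once the exponent exceeds $1$.
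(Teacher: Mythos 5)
Your reduction is correct and, at the crucial step, more careful than the paper's own argument. The paper disposes of the corollary by asserting that for $\delta$ of order $\eps$ the disc $D_{\eps,\alpha}$ is disjoint from $D_{\delta q^{-2},p/q}$ for \emph{all} $p/q$, and then choosing $N$ with $c\Lambda^N<\delta$. But disjointness of those two horocyclic discs is equivalent to $|q\alpha-p|^2\ge\eps\delta$, which fails along the convergents of any irrational $\alpha$ since $|q_n\alpha-p_n|\to 0$; that first sentence of the paper's proof, together with the hypothesis ``$p/q\neq 0$'', is lifted verbatim from part~2 of Corollary~\ref{cor-geom}, where the base point is the rational $0$ and $|p|\ge 1$ rescues the estimate. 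Your accounting — take the optimal renormalization level $r=r(p/q)$, bound $|q\alpha-p|$ from below through $|q_{r+2}\alpha-p_{r+2}|$ and the bounded-type growth $q_n\lesssim\rho^n$, and conclude that the disjointness inequality $|q\alpha-p|^2\gtrsim\rho^{-2r}>2\eps c\,\Lambda^r$ requires $\Lambda(k)<\rho(k)^{-2}$, i.e.\ $\xi(\alpha)>1$ — is the correct one and exposes exactly what is at stake.

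So the unfilled step in your proposal is precisely the one you flag, and it is a genuine gap that the paper's own proof does not address either. The inequality $\Lambda(k)<\rho(k)^{-2}$ is established in the paper only for the golden mean (Proposition~\ref{prop-golden}). Your heuristic for why it should hold in general — that $\Lambda$ is governed by the stable-to-unstable eigenvalue ratio per Gauss step, while the unstable multiplier of $\mathcal R^r$ along the orbit of $R_\alpha$ telescopes to $(\alpha_0\cdots\alpha_{M-1})^{-2}$, which is comparable to $\rho^{2M}$, so the strict factor $\tau<1$ on the stable side yields $\Lambda<\rho^{-2}$ — is the right mechanism, and it can be read off Step~6 of the proof of Theorem~\ref{th-main}, where $\Lambda=\tilde\mu^{1/((m+1)l)}$ with $\tilde\mu=\tau^l/\lambda^l+\eps$. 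However, there $\lambda$ is only a crude uniform lower bound on $|(G^{m+1})'|$; to make your argument (or the paper's) airtight one would have to re-run the renormalization estimate tracking the actual unstable multiplier along the orbit, obtaining for the bubble radius a bound of the form $C\,(\alpha_0\cdots\alpha_{r-1})^{2}\tau^{\,\mathrm{const}\cdot r}q^{-2}$, from which the corollary is immediate. That is a strengthening of Theorem~\ref{th-main}, not a consequence of it as stated.
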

\begin{proof}
It is easy to check that  for $\delta=1/2\eps$, the disc  $D_{\eps, \alpha}$ does not intersect the discs $D_{\delta q^{-2} ,p/q}$ for all $p/q\in \bbQ$, $p/q\neq 0$. Let $c=c(\alpha,\mathcal F)$ be the same as in Theorem \ref{th-main}. Choosing $N$ such that  $c\Lambda^N<\delta$, we conclude that due to Theorem \ref{th-main}, the disc $D_{\eps, \alpha}$ will not intersect any bubbles that grow on the interval  $[p_{N}/q_N, p_{N+1}/q_{N+1}]$ containing $\alpha$. 
\end{proof}

\section{Renormalization for circle diffeomorphisms}

\subsection{Yoccoz's renormalization}

The proof of Theorem \ref{th-main} heavily uses renormalization of circle diffeomorphisms.

In this section we will always identify the circle with the affine manifold $\bbR/\bbZ$. For any two points $a,b\in\bbR/\bbZ$ that are not antipodal, let $[a,b]=[b,a]\subset\bbR/\bbZ$ denote the shortest arc connecting these two points. Similarly, if $a,b\in\bbC$, then the line segment connecting these two points will be denoted by $[a,b]=[b,a]$.

Given a circle diffeomorphism $f$, the renormalization $\mathcal R f$ is defined as a rescaled first-return map of $f$ to a fundamental arc $I=[0, f^{q}(0)]$, where $q$ is a closest return time of zero for $f$.
Different versions of renormalization differ by the choice of $q$ and the rescaling. The latter one, in particular, does not have to be affine. 
In the proof of Theorem \ref{th-Yoc}, Yoccoz \cite{Yoc} introduced the following (nonlinear) analytic rescaling. Let $q_n$ be the denominator of the $n$-th convergent $p_n/q_n$ of $\rot f$, written as an irreducible fraction.  Let $U$ be an open neighborhood of $I=[0, f^{q_n}(0)]$ in $\bbC/\bbZ$, and consider an analytic real-symmetric map $\phi\colon U/f^{q_n}\to\bbC/\bbZ$, defined on the annulus $U/f^{q_n}$ and taking it diffeomorphically onto its image. The lift of this map $\phi$ to the map from $U$ to $\bbC/\bbZ$ is used as a rescaling coordinate in the definition of $\mathcal Rf$.  With this construction, the renormalization of an analytic circle diffeomorphism is again an analytic circle diffeomorphism.

A similar construction was used by E.~Risler in \cite{Ris}. 
In \cite{GY}, the first author in collaboration with M.~Yampolsky proved the hyperbolicity result for the renormalization operator of this type. 
Below we describe the construction for the renormalization $\mathcal R$ used in \cite{GY}.
The constructions  from \cite{Ris} and \cite{GY} work in a more general case: for analytic maps $f\colon \bbR/\bbZ \to \bbC/\bbZ$ close to rotation, but not necessarily preserving the circle $\bbR/\bbZ$. We will only use this construction for maps that preserve $\bbR/\bbZ$.

\subsection{Definition of the renormalization operator}

\subsubsection{Banach space: domain of renormalization}\label{Banach_space_subsec}
We recall that for any $h>0$, the set $\Pi_h = \{z\in \bbC/\bbZ \mid -h<\Im z < h\}$ is the equatorial annulus of width $2h$ on the cylinder $\bbC/\bbZ$. The functional space $\mathcal D_{h}$ consists of all bounded analytic maps $f\colon \Pi_h\mapsto\bbC/\bbZ$ that are defined in $\Pi_h$, and extend continuously to the boundary.

Let $\tilde\Pi_h = \{z\in \bbC \mid -h<\Im z < h\}$ be the strip around the real axis. The space $\mathcal D_h$ can be equipped with an affine complex Banach manifold structure, modeled on the Banach space $\tilde{\mathcal D}_h$ of all $1$-periodic bounded analytic maps $G\colon \tilde\Pi_h\to\bbC$ that are defined in $\tilde \Pi_h$, and extend continuously to the boundary. (The space $\tilde{\mathcal D}_h$ is a complex Banach manifold with respect to the sup norm in $\tilde\Pi_h$.) The atlas is constructed as follows: if $F\colon \tilde\Pi_h\to\bbC$ is a lift of $f\in\mathcal D_h$, then the correspondence $f\mapsto (F -\mathrm{id})$ provides local charts on $\mathcal D_h$, assuming that $F$ depends continuously on $f$. The transition maps between such charts are affine.

\subsubsection{Fundamental domain and the function $n(f)$}

First, choose the fundamental domain for renormalization.
  For a number $\alpha\in \bbR/\bbZ$, let $p_n/q_n$ be its continued fractional convergents. Find the smallest number $m$ such that $0<q_m\alpha-p_m<0.01.$
We define $n(\alpha):=q_m$. It was shown in \cite{GY} that $n(\alpha)$ is well defined and locally constant everywhere outside a certain closed countable subset $K$ that consists of rational numbers and whose only accumulation points are the rational points of the form $p/q$ with $q<100$.

Consider the set  
$$
\mathcal T=\{R_\alpha \mid \alpha\in (\bbR/\bbZ)\setminus K\}
$$
of all rigid rotations by the angles $\alpha\in (\bbR/\bbZ)\setminus K$.

  Let $\mathcal U_h\subset \mathcal D_h$ be the union of disjoint neighborhoods of connected components of $\mathcal T$; we assume that all maps  $f\in \mathcal U_h$ are univalent. Then $n(\cdot)$ extends as a continuous locally constant function on $\mathcal U_h$.
  For $f\in \mathcal U_h$, we write $n=n(f)$ and use a fundamental domain of $f^n$ in the renormalization construction for $f$ below.

\subsubsection{Renormalization operator}\label{renorm_subsection}

Fix any $h\in \bbR^+$ and take $f\in \mathcal U_h$. Put $f^n(0)-0 = :L$ where $n=n(f)$ is defined above. Consider the segment $I$ joining the points $-2ih L$ and $2ih L$. Let $R$ be a curvilinear quadrilateral bounded by the segment $I$, its image $f^n(I)$, and two straight line segments joining their endpoints.

We assume that the domain $\mathcal U_h$ is sufficiently small so that any $f\in \mathcal U_{h}$ is sufficiently close to a rotation. Then these four curves are simple and bound a domain in $\Pi_{h}$.

There exists a conformal map $\Psi \colon R \to \bbC$, $\Psi(0)=0$, that extends conformally to the union of $R$ and a neighborhood of the interval $I$, where it conjugates $f^n$ to the shift by $(-1)$.   
The map $\Psi$ descends to the map $\tilde \Psi\colon R/f^n \mapsto \bbC/\bbZ$. The conditions above do not define the map $\Psi$ uniquely. The exact choice of $\Psi$ is described in \cite{GY}.  In particular, $\Psi$ is chosen so that if $f$ preserves the real axis, then $\Psi$ also preserves the real axis, and if $f$ is a rigid rotation, then $\Psi$ is affine.

Let $P\colon R\to R$ be the (partially defined) first-return map to $R$ under the iterates of $f$.

\begin{definition}
For any $h>0$ and $f\in\mathcal U_h$, the \textbf{renormalization} $\mathcal R_h f$ of $f$ is defined as the composition
$$
\mathcal R_h f := \tilde \Psi \circ P \circ \tilde \Psi^{-1}.
$$
\end{definition}
Note that the map $\tilde \Psi$ depends not just on $f$, but also on the parameter $h$, so the renormalizations $\mathcal R_h f$ and $\mathcal R_{\tilde h} f$ do not necessarily coincide when $h\neq \tilde h$.

It follows from the definition that if $f\in\mathcal U_h$ preserves the real circle (i.e., if $f$ is a circle diffeomorphism), then $\mathcal R_h f$ is a circle diffeomorphism as well and
$$
\rot (\mathcal R_h f) = G^{m+1}(\rot f)=\frac{-(\rot f) q_{m+1}+p_{m+1}}{(\rot f) q_m - p_m},
$$
where $\rot f$ is assumed to be in the interval $[0,1)$ with $\rot f\sim \{p_k/q_k\}$, and the index $m\ge 0$ is such that $n(f)=q_m$. 


Note that if $f$ is a rotation, then the Riemann surface $R/f^n$ is conformally equivalent to the cylinder $\Pi_{2hL/L} = \Pi_{2h}$ and $\Psi$ is a linear expansion in $1/L$ times, thus  $\mathcal R_h f$ is again a rotation. Now, for maps $f$ sufficiently close to rotations,  it follows that the renormalization $\mathcal R_h f$ is guaranteed to be defined on the cylinder that is only slightly smaller than $\Pi_{2h}$. Hence, without loss of generality we may assume that the neighborhood $\mathcal U_h$ is sufficiently small so that the following lemma holds (see \cite[Lemmas 3.1, 3.2]{GY}):

\begin{lemma}
  \label{lem-domain}
 For any $h>0$, the map  $\mathcal R_h$ is a real-symmetric complex-analytic operator $\mathcal R_h \colon \mathcal U_h \to \mathcal D_{1.5h}$.

\end{lemma}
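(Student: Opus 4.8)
The plan is to reduce the statement to the model case of rigid rotations followed by a quantitative perturbation argument, exactly as carried out in \cite[Lemmas 3.1, 3.2]{GY}. There are four things to check: that $\mathcal R_h f$ is defined on a cylinder containing $\Pi_{1.5h}$, that it is bounded and continuous up to the boundary (so that it lies in $\mathcal D_{1.5h}$), that $\mathcal R_h$ depends complex-analytically on $f$, and that it is real-symmetric.

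\textbf{The model case.} First I would compute $\mathcal R_h$ on a rotation $f=R_\alpha\in\mathcal T$. Here $n=n(\alpha)=q_m$, $L=q_m\alpha-p_m\in(0,0.01)$, the curvilinear quadrilateral $R$ degenerates into the genuine rectangle with vertices $\pm 2ihL$ and $L\pm 2ihL$, and the normalized coordinate is the affine map $\Psi(z)=-z/L$, so that $\Psi(R)/\bbZ=\Pi_{2h}$ and $\tilde\Psi$ is a biholomorphism onto $\Pi_{2h}$. The first-return map of $R_\alpha$ to $R$ descends to a rotation of this annulus, hence $\mathcal R_h R_\alpha$ is the rigid rotation by $G^{m+1}(\alpha)$, defined on all of $\bbC/\bbZ$, in particular on $\Pi_{1.5h}$, and it is manifestly real-symmetric and analytic. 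This establishes the assertion, with room to spare, on the set $\mathcal T$.

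\textbf{Domain and boundedness under perturbation.} For $f\in\mathcal U_h$ close to some $R_\alpha$, $n(f)=q_m$ is locally constant, $L=f^n(0)-0$ is close to $q_m\alpha-p_m$, and the four boundary curves of $R$ are analytically close to those of the model rectangle; after shrinking $\mathcal U_h$ they remain simple and bound a domain inside $\Pi_h$. The conformal coordinate $\Psi$ is obtained by linearizing $f^n$ near $I$: since $f^n$ is univalent and $C^0$-close to $z\mapsto z+L$ on a neighborhood of $I$, it is conformally conjugate there to that translation by a map close to the identity, and composing with $z\mapsto -z/L$ and normalizing as in \cite{GY} gives $\Psi$ with $\Psi(0)=0$ conjugating $f^n$ to the shift by $-1$. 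A normality/compactness argument shows that, shrinking $\mathcal U_h$ further, the image $\tilde\Psi(R/f^n)$ contains $\Pi_{2h-\delta}$ for any prescribed small $\delta$, while the first-return map $P$ — a composition of a combinatorially stable, uniformly bounded number of iterates of $f$, with orbits staying inside $\Pi_h$ — is defined on the part of $R$ covering this subcylinder. Choosing $\delta<h/2$ makes $\mathcal R_h f=\tilde\Psi\circ P\circ\tilde\Psi^{-1}$ well defined, bounded and analytic on $\Pi_{1.5h}$, and continuous up to $\partial\Pi_{1.5h}$ since each ingredient is; thus $\mathcal R_h f\in\mathcal D_{1.5h}$.

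\textbf{Analyticity and real-symmetry; the main obstacle.} On each connected component of $\mathcal U_h$ the integer $n(f)$ is constant, so $L$, the curves bounding $R$, the linearizing coordinate $\Psi$ (built in \cite{GY} by an explicit, parameter-analytic fixed-point scheme for the conjugacy), and the finite composition $P$ all depend complex-analytically on $f$; since composition of analytic maps between the relevant Banach manifolds is analytic, $\mathcal R_h$ is complex-analytic. Real-symmetry is inherited ingredient by ingredient: if $f$ preserves $\bbR/\bbZ$, then $R$ is symmetric about $\bbR$, $f^n$ is real-symmetric, the canonical $\Psi$ of \cite{GY} preserves the real axis, $P$ is real-symmetric, hence $\mathcal R_h f$ commutes with complex conjugation, i.e. $\mathcal R_h$ intertwines the anti-involution $f\mapsto\overline{f(\bar z)}$. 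The crux of the whole argument is the quantitative domain control in the perturbation step: one must show that the image of $\tilde\Psi$ and the domain of $P$ shrink by less than the width $h/2$ relative to the model cylinder $\Pi_{2h}$. This needs uniform distortion (Koebe-type) and normality estimates for the family $\{\Psi_f\}$ together with stability of the return-time combinatorics over a sufficiently small $\mathcal U_h$ — precisely the content of \cite[Lemmas 3.1, 3.2]{GY}; everything else is comparatively routine.
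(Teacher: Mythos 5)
Your proposal is correct and follows the same route the paper takes: compute the model case of rigid rotations (where $R/f^n\cong\Pi_{2h}$ and $\Psi$ is linear expansion by $1/L$), argue by perturbation that for $f$ in a sufficiently small $\mathcal U_h$ the renormalization remains defined on a cylinder only slightly narrower than $\Pi_{2h}$, and defer the quantitative distortion/normality estimates to \cite[Lemmas~3.1, 3.2]{GY}. The paper itself gives only this sketch plus the reference, so your write-up is in fact more explicit about the structure (domain control, analytic dependence via the parameter-analytic construction of $\Psi$, real-symmetry) while correctly identifying the hard estimates as already established in \cite{GY}.
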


\subsection{Hyperbolicity of renormalization}

\begin{definition}\label{Brjuno_def}
	
	An irrational number $\alpha\in(0,1)$ is a Brjuno number if the following sum converges:
	\begin{equation}
	\label{BY-function}
	\Phi(\alpha)=\sum_{n\geq 0}\alpha_{-1}\alpha_0\cdots\alpha_{n-1}\log\frac{1}{\alpha_n}.
	\end{equation}
	This sum is known as the Yoccoz-Brjuno function, see \cite{Yoc}. The set of all Brjuno numbers will be denoted by $\mathcal B$.
\end{definition}

For any $C>0$, consider the subset of Brjuno numbers $\mathcal B_C = \{\alpha\in \mathcal B \mid \Phi(\alpha)\le C\}$. For each $C>0$, this is a closed subset of $\bbR\setminus \bbQ$.

In the next theorem, we restrict $\mathcal R_h f$ to the cylinder $\Pi_h$ to get an operator $\mathcal R_h \colon \mathcal U_h \mapsto \mathcal D_h$.

\begin{theorem}[\cite{GY}, N.Goncharuk, M.Yampolsky]
\label{th-renorm}
For all sufficiently large $h>0$, the renormalization operator $\mathcal R_h \colon \mathcal U_{h}\to \mathcal D_{h}$ satisfies the following properties:
\begin{enumerate}
 \item $\mathcal R_h$ is a real-symmetric complex-analytic operator with compact differential at each rigid rotation $R_\alpha\in \mathcal T$;
 \item  \label{it-hyp} For each $C$, for $h>c_1 C+c_2$ where $c_1, c_2$ are universal constants, the renormalization $\mathcal R_h$ is uniformly hyperbolic on the set
 $$
 \{R_\alpha \mid \alpha\in \mathcal B_{C}\};
 $$
 moreover,
 \begin{itemize}
  \item  its unstable direction at each point of this set has complex dimension~$1$ and is tangent to the family $\{R_{a}, a\in \bbC/\bbZ\}$. The rate of expansion along the unstable direction is bounded from below by a universal constant.
 \item    The germ of a stable leaf at any $R_\alpha$  with $\alpha \in \mathcal B_{C}$ is a local codimension 1 analytic submanifold $\mathcal V_\alpha$. The rate of contraction along $\mathcal V_\alpha$ is bounded from above by a universal constant. 

 \item The submanifold $\mathcal V_\alpha$ contains only maps $f$ that are analytically conjugate to the rotation $R_\alpha$: there exists a conjugacy $\xi$, defined in $\Pi_{0.4 h}$, such that $\xi(0)=0$ and $f = \xi R_\alpha \xi^{-1}$. Moreover, if $f$ is analytically conjugate to $R_\alpha$ in the substrip $\Pi_{h/3}$ and sufficiently (depending on $\alpha$) close to $R_\alpha$, then  $f \in \cV_\alpha$.

 \end{itemize}
\end{enumerate}

\end{theorem}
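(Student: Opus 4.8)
Since Theorem~\ref{th-renorm} is quoted verbatim from \cite{GY}, the proof I would give is a reconstruction of the argument there, and its architecture is the following. The organising idea is that $\mathcal R_h$ is a skew product over the Gauss map: directly from the definition of $\mathcal R_h$ it sends each rigid rotation $R_\alpha\in\mathcal T$ to the rigid rotation $R_{G^{m+1}(\alpha)}$, where $m=m(\alpha)$ is the least index with $0<q_m\alpha-p_m<0.01$, so that $n(\alpha)=q_m$. Thus $\mathcal T$ is (locally) $\mathcal R_h$-invariant and carries the ``base'' dynamics, while the transversal directions carry the ``fibre'' dynamics, and hyperbolicity reduces to (i) a lower bound on the expansion of $D\mathcal R_h$ in the rotation direction and (ii) an upper bound on its transversal contraction, both uniform over $\{R_\alpha\mid\alpha\in\mathcal B_C\}$. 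Item (1) of the theorem --- compactness of $D\mathcal R_h$ at each $R_\alpha$ --- is then nearly automatic: by Lemma~\ref{lem-domain} the operator takes values in $\mathcal D_{1.5h}$, and the natural inclusion $\mathcal D_{1.5h}\hookrightarrow\mathcal D_h$ is compact because a bounded family of maps analytic on $\Pi_{1.5h}$ is normal, hence precompact on the smaller cylinder $\Pi_h$; real-symmetry and complex analyticity are built into the construction of the rescaling $\Psi$ in \cite{GY}.

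For the unstable direction I would differentiate along the rotation family. On $\mathcal T$, parametrised by the rotation number, $\mathcal R_h$ acts as a branch of $G^{m+1}$, i.e. as the M\"obius map $\alpha\mapsto\frac{-\alpha q_{m+1}+p_{m+1}}{\alpha q_m-p_m}$, whose derivative equals $\pm(\alpha q_m-p_m)^{-2}$; since $n(\alpha)=q_m$ forces $\alpha q_m-p_m\in(0,0.01)$, this exceeds $10^4$, a universal constant, so the eigenvalue of $D\mathcal R_h$ along $\partial_a R_a$ is uniformly expanding. Differentiating the identity $\rot(\mathcal R_h f)=G^{m+1}(\rot f)$ also shows that $D\mathcal R_h$ at $R_\alpha$ preserves the splitting of the tangent space into the line $\bbC\cdot\partial_a R_a$ and the codimension-one subspace $\{v:\,d(\rot)(v)=0\}$; together with compactness, once (ii) holds the spectrum off the unstable line lies in a disk of radius strictly less than $1$, which is the stated dichotomy, and the unstable leaf is tangent to $\{R_a\}$ precisely because that curve maps to itself.

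The substance of the proof --- and the step I expect to be the main obstacle --- is the uniform transversal contraction together with the identification of the stable leaf $\mathcal V_\alpha$; this is where the quantitative hypothesis $h>c_1C+c_2$ is consumed. The route is to coordinatise a map $f$ with $\rot f=\alpha\in\mathcal B_C$ close to $R_\alpha$ by its linearising conjugacy: Yoccoz's Theorem~\ref{th-Yoc-2} supplies $\xi$ with $f=\xi^{-1}R_\alpha\xi$ together with an explicit lower bound --- governed by the Yoccoz--Brjuno budget $\Phi(\alpha)\le C$ of Definition~\ref{Brjuno_def} --- on the width of the strip on which $\xi$ is defined, which is exactly why one must begin with $h$ large relative to $C$. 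Renormalization acts on this conjugacy through $\mathcal R_h f=\tilde\Psi\circ P\circ\tilde\Psi^{-1}$: the conjugacy for $\mathcal R_h f$ is obtained from $\xi$ by pre- and post-composition with $\tilde\Psi^{\pm1}$ and by restriction to a fundamental domain of $f^{n}$ that $\tilde\Psi$ dilates by a factor $\sim L^{-1}$, and because this dilation is matched to the shrinking of the domain while Lemma~\ref{lem-domain} keeps the width $\ge h$, one obtains a strict, and uniform over $\mathcal B_C$, contraction of the conjugacy in the sup-norm --- i.e. uniform fibre contraction. Running the same estimate forward shows that the renormalisations of an $f$ conjugate to $R_\alpha$ in $\Pi_{h/3}$ remain bounded and converge to rotations, so such $f$ lies in the stable set; and a ``towers'' limit of the rescalings $\Psi$ turns the convergence of renormalisations to rotations into a conjugacy of any $f\in\mathcal V_\alpha$ to $R_\alpha$ on $\Pi_{0.4h}$, so $\mathcal V_\alpha$ is exactly the set of such maps, which analyticity and the implicit function theorem upgrade to a codimension-one analytic submanifold. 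The genuine difficulty is to make every one of these estimates uniform over the non-compact set $\mathcal B_C$, tracking at each step the exchange between loss of analyticity width and the Brjuno budget $C$.
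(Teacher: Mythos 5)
This theorem is imported: the paper states it as a citation to \cite{GY} and does not supply a proof, so there is no ``paper's own proof'' to measure your attempt against. What I can do is assess your reconstruction on its own terms and against the few facts about \cite{GY} that the present paper uses explicitly.

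Your high-level architecture is plausible and is consistent with everything the paper later draws from \cite{GY}: the skew-product picture over the Gauss map, compactness of $D\mathcal R_h$ via Lemma~\ref{lem-domain} together with Montel/Cauchy estimates on the compact inclusion $\mathcal D_{1.5h}\hookrightarrow\mathcal D_h$, and the expansion computation. In particular your formula $(G^{m+1})'(\alpha)=\pm(q_m\alpha-p_m)^{-2}$ together with $0<q_m\alpha-p_m<0.01$ correctly yields a universal lower bound on the unstable eigenvalue, exactly as the present paper uses in Step~2 of the Main Theorem's proof (``$|d|_{R_\beta}\mathcal R_h\,1|>\lambda$''). Two points, however, are more than hand-waving; they are genuine gaps as written. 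First, you identify the codimension-one invariant subspace as $\{v:\,d(\rot)(v)=0\}$ and appeal to ``differentiating the identity $\rot(\mathcal R_h f)=G^{m+1}(\rot f)$.'' The rotation number is not a differentiable functional on $\mathcal D_h$ (it is at best H\"older along generic arcs and is locally constant on a dense set), so $d(\rot)$ does not exist; the correct characterization, which the present paper quotes from \cite{GY}, is the zero-average subspace $\{v\in T\mathcal D_h\mid\int_{\bbR/\bbZ}v\,dz=0\}$, and its invariance must be derived from the structure of the operator, not from a nonexistent derivative. Second, the sentence ``together with compactness\dots the spectrum off the unstable line lies in a disk of radius strictly less than $1$'' is a fixed-point argument, and a generic $\alpha\in\mathcal B_C$ is not a periodic point of the Gauss map, so $R_\alpha$ is not a fixed (or periodic) point of $\mathcal R_h$ and the spectrum of the single compact operator $D|_{R_\alpha}\mathcal R_h$ says nothing by itself about hyperbolicity of the orbit. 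Uniform hyperbolicity over the non-compact Cantor family $\{R_\alpha\mid\alpha\in\mathcal B_C\}$ requires a uniform estimate along orbits --- either a cone-field/adapted-norm argument, or, as the present paper actually imports from \cite[Theorem 4.6]{GY}, a direct iterated-differential bound of the form $\|d|_{R_\alpha}\mathcal R_h^{\,l}v\|_h\le c(\alpha,h)\cdot 0.1^{\,l}\|v\|_h$ for zero-average $v$, which is then made uniform on the compact sub-families $M_k$. Your route to the contraction via pulling Yoccoz's linearizing conjugacy through $\tilde\Psi$ is a reasonable heuristic for identifying the stable leaf $\cV_\alpha$ as the conjugacy class of $R_\alpha$ (the third bullet), but as an argument for the \emph{differential} contraction it risks circularity: one wants the contraction first and the stable manifold as a consequence of a Banach-space stable manifold theorem, not the other way around.
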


Since Theorem \ref{th-main} is formulated for bounded-type rotation numbers, we will need the following observation.
\begin{lemma}
\label{lem-B-bound}
For any $k$, the Brjuno function $\Phi$ is bounded on irrational numbers $\alpha$ of type bounded by $k$. 
\end{lemma}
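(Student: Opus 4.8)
The plan is to prove the quantitative statement that for every irrational $\alpha$ of type bounded by $k$ one has $\Phi(\alpha)\le (k+2)\log(k+1)$, which in particular gives boundedness. Everything reduces to two elementary estimates on the continued-fraction tails $\alpha_n$. First I would record a lower bound: since $k_n=[1/\alpha_n]\le k$, we have $1/\alpha_n<k_n+1\le k+1$, hence
$$
\alpha_n>\frac{1}{k+1}\qquad\text{for all }n\ge 0,
$$
so every logarithm appearing in $\Phi(\alpha)$ satisfies $\log(1/\alpha_n)<\log(k+1)$.

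Second, I would show that the products $\alpha_0\alpha_1\cdots\alpha_{n-1}$ decay geometrically with a ratio depending only on $k$. Writing $\alpha_n=1/(k_n+\alpha_{n+1})$ and using $k_n\ge 1$ together with the lower bound $\alpha_{n+1}>1/(k+1)$ just obtained, we get
$$
\alpha_n<\frac{1}{\,1+\tfrac{1}{k+1}\,}=\frac{k+1}{k+2}=:\lambda<1\qquad\text{for all }n\ge 0,
$$
and therefore $\alpha_0\alpha_1\cdots\alpha_{n-1}\le\lambda^{n}$. (Alternatively one could invoke the classical identity $\alpha_0\cdots\alpha_{n-1}=|q_n\alpha-p_n|<q_n^{-1}$ together with the exponential growth $q_{n+1}=k_nq_n+q_{n-1}\ge q_n+q_{n-1}$, but the direct estimate above is self-contained and tailored to the bounded-type hypothesis.)

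Combining the two estimates, and recalling that $\alpha_{-1}=1$ so that the $n$-th term of the Yoccoz--Brjuno series equals $\alpha_0\cdots\alpha_{n-1}\log(1/\alpha_n)$, each term is at most $\lambda^{n}\log(k+1)$; summing the geometric series yields
$$
\Phi(\alpha)\le\log(k+1)\sum_{n\ge 0}\lambda^{n}=\frac{\log(k+1)}{1-\lambda}=(k+2)\log(k+1),
$$
which is independent of $\alpha$. I do not expect a genuine obstacle in this argument; the only points requiring a little care are the index bookkeeping caused by the shift $\alpha_{-1}=1$ in the definition of $\Phi$, and the fact that the uniform bound $\alpha_n\le\lambda<1$ relies on $k\ge 1$, which is built into the notion of type bounded by $k$.
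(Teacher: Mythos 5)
Your proof is correct and amounts to a careful expansion of the paper's one-line remark, which simply cites the estimate $\alpha_n > 1/(k+1)$; you establish that same bound and then spell out the two consequences (bounded logarithms and geometric decay of the products $\alpha_0\cdots\alpha_{n-1}$) needed to sum the Yoccoz--Brjuno series. The quantitative constant $(k+2)\log(k+1)$ is a nice bonus, and the index bookkeeping with $\alpha_{-1}=1$ is handled correctly.
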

This follows from the estimate $\alpha_n>1/(k+1)$ for all $n$.

\section{Proof of the Theorem \ref{th-main}}
\subsection{Plan of the proof}

The idea of the proof is the following. We will show that when renormalization is applied to a map $f_t$ from the family $\mathcal F$, a certain M\"obius transformation is applied to its complex rotation number $\tau(F_t)$, where $F_t$ is the corresponding lift of $f_t$ to the real line. After $n$ iterates of renormalization, the germ of the family $\mathcal F = \{f_t\}$ at $t_0$ will turn into a family $\{g_t\}$ that is $\mu^n$-close to the family of rotations, where $\mu<1$ is the 
ratio between the contraction rate along the stable manifolds and the expansion rate along the unstable manifold of the renormalization operator.
This follows from the hyperbolicity of the renormalization operator. But a pure rotation has no bubbles, thus the bubbles in the family $\{g_t\}$ are $\mu^n$-small. This enables us to estimate sizes of bubbles for the initial family $\mathcal F$ near $t=t_0$. 

In Sec. \ref{sec-renorm-bubbles}, we prove that  the renormalization operator acts as a M\"obius map on complex rotation numbers.  For this proof, we need a more explicit construction for complex rotation numbers of hyperbolic circle diffeomorphisms. This construction first appeared in \cite{NG2012-en}; it will be presented in section Sec. \ref{sec-expl}.

Sec. \ref{sec-proof} contains the proof of Theorem \ref{th-main}.

\subsection{Complex rotation numbers under renormalization}
\label{sec-renorm-bubbles}

The results of this subsection will be stated and proved for a more general version of the renormalization operator that will be denoted by $\mathcal R$. In this section we assume that $n=q_m$ for an arbitrary index $m$. Let $f$ be an analytic circle diffeomorphism and let $R\subset \bbC/\bbZ$ be any fundamental domain of $f$, such that $R$ is a curvilinear quadrilateral with two opposite sides being $I$ and $f^n(I)$. Here $I=[-ai,ai]$ is some vertical line segment. We define $\mathcal Rf:= \tilde\Psi\circ P\circ \tilde\Psi^{-1}$, where $P$ is the (partially defined) first return map of $f$ to $R$ and $\tilde\Psi\colon R/f^n\to \bbC/\bbZ$ is an arbitrary real-symmetric analytic diffeomorphism of the Riemann surface $R/f^n$ onto its image in $\bbC/\bbZ$. We will say that $\mathcal Rf$ is well-defined if the above construction yields an analytic circle diffeomorphism  $\mathcal Rf$.

The main result of this subsection is given by the following theorem:

\begin{theorem}
	\label{th-renorm-bubbles}
	Assume, $f$ is an analytic hyperbolic circle diffeomorphism with rotation number $\rot f = p/q\sim\{p_l/q_l\}\in [0,1)$ and a well defined renormalization $\mathcal Rf$. Let $F$ and $\mathcal R F$ be the lifts of $f$ and $\mathcal R f$ respectively, such that $\rot F = p/q$ and $\rot(\mathcal R F) \in [0,1)$. Assume that $n=n(f)=q_m$, for some index $m\ge 0$. Then 
	$$
	\tau(\mathcal R F) = \begin{cases}
		T_{p/q}\big(\tau(F)\big) & \text{if }m\in 2\bbZ+1\\
		T_{p/q}\big(\overline{\tau(F)}\big) & \text{if }m\in 2\bbZ,
	\end{cases}
	$$
	where 
	$$
	T_{p/q}(\tau)= \frac{-q_{m+1} \tau + p_{m+1}}{q_{m}\tau - p_{m}}.
	$$
\end{theorem}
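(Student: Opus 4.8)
The plan is to deploy the explicit description of the complex rotation number of a hyperbolic circle diffeomorphism given in Section~\ref{sec-expl}, and to show that the renormalization $\mathcal Rf$ is simply $f$ rewritten on the \emph{same} complex torus, with its marked homology basis changed by the element of $GL(2,\bbZ)$ dictated by the continued-fraction algorithm; reading off its effect on the modulus yields exactly $T_{p/q}$, with a complex conjugation inserted when the relevant orientation is reversed.

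First I would recall from Section~\ref{sec-expl} that a hyperbolic circle diffeomorphism $f$ with $\rot f = p/q$ carries a canonically associated complex torus $\mathcal T_f$, assembled from the dynamics of $f$ on an annular neighborhood of $\bbR/\bbZ$ near the attracting and repelling periodic orbits, together with a natural basis $(e_1,e_2)$ of $H_1(\mathcal T_f)$: the class $e_1$ of the loop $\bbR/\bbZ$ and a ``dynamical'' class $e_2$ produced by the identification by $f$. In the uniformization $\mathcal T_f\cong\bbC/(\bbZ\,e_1+\bbZ\,e_2)$ normalized by $e_1\mapsto 1$ one has $e_2\mapsto\tau(F)$, with $\Re\tau(F)=p/q$ and $\Im\tau(F)>0$ as in Theorem~\ref{bubble_theorem}. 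Now $\mathcal Rf=\tilde\Psi\circ P\circ\tilde\Psi^{-1}$ is again a hyperbolic circle diffeomorphism --- its periodic orbits are the $\tilde\Psi$-images of the returning periodic orbits of $f$, with the same (non-$\pm1$) multipliers --- and $\rot(\mathcal Rf)=G^{m+1}(p/q)$ is again rational, so the same construction applies to $\mathcal Rf$ and yields $(\mathcal T_{\mathcal Rf},e_1',e_2')$ with $\tau(\mathcal RF)$ the modulus in the normalization $e_1'\mapsto 1$.

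The core of the argument is that $\tilde\Psi$ induces a conformal isomorphism $\mathcal T_{\mathcal Rf}\xrightarrow{\ \sim\ }\mathcal T_f$. Indeed $\mathcal Rf$ is $f$ expressed in the coordinate $\tilde\Psi$ and accelerated to the first-return map $P$ on the fundamental domain $R$ of $f^{q_m}$; since $\tilde\Psi$ conjugates $f^{q_m}$ to a translation on a neighborhood of $I$, it extends over the grand orbit and identifies the ``neighborhood of $\bbR/\bbZ$ modulo $\mathcal Rf$'' picture with the ``neighborhood of $\bbR/\bbZ$ modulo $f$'' picture, hence, after the relevant quotients, identifies $\mathcal T_{\mathcal Rf}$ with $\mathcal T_f$. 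Consequently their period lattices coincide up to a complex scalar, so $(e_1',e_2')=M_m\cdot(e_1,e_2)$ for some $M_m\in GL(2,\bbZ)$, and everything reduces to computing $M_m$. For this I would use the standard continued-fraction combinatorics of first returns: on its two natural sub-pieces the first-return map $P$ to the fundamental domain of $f^{q_m}$ acts as $f^{q_m}$ and $f^{q_{m+1}}$, which is exactly the mechanism behind $\rot(\mathcal Rf)=G^{m+1}(\rot f)$. Tracking the curve $\tilde\Psi^{-1}(\bbR/\bbZ)$ --- the core of the annulus $R/f^{q_m}$, whose lift runs from $0$ to $f^{q_m}(0)$ --- together with the graph of $P$, and expressing their homology classes in the basis $(e_1,e_2)$, I expect
$$
e_1'=\pm\bigl(q_m e_2-p_m e_1\bigr),\qquad e_2'=\pm\bigl(p_{m+1}e_1-q_{m+1}e_2\bigr)
$$
(with the same sign), so that in the normalization $e_1=1,\ e_2=\tau(F)$,
$$
\tau(\mathcal RF)=\frac{e_2'}{e_1'}=\frac{-q_{m+1}\tau(F)+p_{m+1}}{q_m\tau(F)-p_m}=T_{p/q}\bigl(\tau(F)\bigr),
$$
valid whenever the identification $\mathcal T_{\mathcal Rf}\cong\mathcal T_f$ preserves orientation.

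It remains to deal with the parity. The convergent identity $p_{m+1}q_m-p_mq_{m+1}=(-1)^m$ forces $\det M_m=(-1)^{m+1}$. For $m$ odd the basis change is orientation-preserving and the display above gives $\tau(\mathcal RF)=T_{p/q}(\tau(F))$ directly. For $m$ even, $\det M_m=-1$, and the naive formula would place $\tau(\mathcal RF)$ in the lower half-plane, which is impossible; the discrepancy is exactly the real-symmetric normalization built into $\tilde\Psi$, which --- being required to conjugate $f^{q_m}$, a map displacing $0$ by $f^{q_m}(0)-0$ of sign $(-1)^m$, to the shift by $-1$ --- reverses the dynamical period direction precisely when $m$ is even; reflecting one period of $\mathcal T_f$ replaces $\tau(F)$ by $\overline{\tau(F)}$, so $\tau(\mathcal RF)=T_{p/q}(\overline{\tau(F)})$ in that case. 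The lifts with $\rot(\mathcal RF)\in[0,1)$ are automatically compatible because $T_{p/q}$ restricted to $\bbR$ is the known transformation $G^{m+1}$ of rotation numbers. The main obstacle is the homological bookkeeping of the third paragraph: one must carefully unwind the continued-fraction combinatorics of $P$ and verify that replacing the affine rescaling by an \emph{arbitrary} real-symmetric analytic diffeomorphism $\tilde\Psi$ changes only the homotopy class and the orientation of the gluing --- legitimate because the modulus is a conformal invariant --- so that the precise matrix $M_m$, and not merely its $GL(2,\bbZ)$-orbit, is obtained.
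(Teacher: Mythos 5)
Your proposal is correct and takes essentially the same route as the paper: in both, $\Psi$ provides a conformal isomorphism between the complex tori for $f$ and $\mathcal Rf$, the homology basis change is the $GL(2,\bbZ)$ matrix built from $(p_m,q_m,p_{m+1},q_{m+1})$, and the parity of $m$ governs whether the identification is orientation-preserving (via $\det=(-1)^{m+1}$, equivalently via the fact that $\Psi$ reverses orientation exactly when $F^{q_m}(0)-p_m>0$). The paper carries out the homological bookkeeping you flag as the obstacle by explicitly constructing curves $\xi_1,\xi_2$ bounding a fundamental domain for $(F_m,F_{m+1})$ and verifying that $\Psi(\xi_1)$ is a suitable (resp.\ anti-suitable) curve in the sense of Theorem~\ref{th-bubbles} and Lemma~\ref{lem-bubbles-anti}.
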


Observe that the result of Theorem~\ref{th-renorm-bubbles} is independent of a particular choice of the renormalization operator and depends only on $m$. Indeed, all renormalizations $\mathcal Rf$ with the same $m$ will be analytically conjugate on the circle, and the complex rotation number is invariant under such conjugacies (Lemma \ref{analyt_conjugacy_lemma}).

\begin{remark}
Only even values of $m$ were used in Theorem \ref{th-renorm}, but for the sake of completeness, we provide the formula for odd $m$ in Theorem~\ref{th-renorm-bubbles} as well. 
\end{remark}

\begin{remark}
Note that when $\tau$ is real and coincides with the regular rotation number (i.e., $\tau=p/q$), then $T_{p/q}$ acts on $\tau$ as an iterate of the Gauss map: $T_{p/q}(\tau) = G^{m+1}(\tau)$. For complex values of $\tau$ this relation can be generalized as follows: for every positive integer $k\in\bbN$, let the function $G_k\colon\bbC\setminus\{0\}\to\bbC$, given by
$$
G_k(\tau):= \frac{1}{\tau}-k,
$$
be the analytic extension of the appropriate branch of the Gauss map. Then 
$$
T_{p/q}(\tau) =G_{k_m}\circ G_{k_{m-1}}\circ\dots\circ G_{k_0}(\tau)
$$
where $k_j$ are the terms of the continued fractional expansion of $p/q$. 
In particular, this implies that $T_{p/q}\in PSl(2, \bbZ)$.
\end{remark}

The proof of Theorem~\ref{th-renorm-bubbles} will rely on the explicit construction of the complex rotation number, described in the next subsection.

\subsubsection{Explicit construction for $\tau(F)$}
\label{sec-expl}

As above, let $f$ be an analytic circle diffeomorphism and let $F$ be its lift to the real line. Assume, $f$ is a hyperbolic difeomorphism. Then Theorem~\ref{bubble_theorem} states that the complex rotation number $\tau(F)$ has a strictly positive imaginary part. The explanation of this phenomenon is the following. It turns out that  the complex torus $T_{f+\omega}$ defined in the beginning of Section~\ref{complex_rot_number_intro_sec} does not degenerate as $\omega\to 0$: the hyperbolic circle diffeomorphism $f$ has a fundamental domain $A_{f}$ in a neighborhood of $\bbR/\bbZ$, and the complex rotation number $\tau(F)$, which is the limit of $\tau_F(\omega)$  as $\omega\to 0$, is the modulus of the complex torus $A_{f} / f$. 
The lower border of the fundamental domain $A_f$ will be a curve that passes below attracting periodic orbits and above repelling periodic orbits of $f$. Below we provide the construction of this fundamental domain, see also \cite{XB_NG}. 

Suppose that  $f$ is a hyperbolic diffeomorphism with $2n$ periodic orbits of period $q$ on the circle. The attracting and repelling periodic points alternate on the circle. Choose a linearizing chart of $f^q$ at each periodic point so that the charts around the points of the same orbit are obtained from each other as the push forward by the appropriate iterate of $f$. Let $a_j$ with $-\infty<j<+\infty$ be the lifts of these periodic points to the real line $\bbR$, enumerated consecutively from left to right, and let $\psi_j$ be the lifts of the corresponding linearizing charts. Note that each chart $\psi_j$ is defined on a complex neighborhood containing the open interval $(a_{j-1},a_{j+1})$. 

\begin{definition}
	We will say that a $1$-periodic piecewise smooth curve $\gamma\subset \bbC$ is \emph{suitable} for finding $\tau(F)$ if
	\begin{itemize}
		\item  the set  $\gamma\setminus \bbR$ is a union  of $2nq$ curves $\gamma_j$ such that $\gamma_j$ is an arc of a circle in the linearizing chart $\psi_j$ of the orbit of $a_j$. The circle is not necessarily centered at $a_j$ or on the real line; also, while the circular arc $\gamma_j$ must be contained in the domain of the linearizing chart, the circle itself is not necessarily completely contained there. 
		
		\item  $\gamma$ passes above repelling periodic points and below attracting periodic points of $f$ on the real line,
		
		\item  $F(\gamma)$ is above $\gamma$ in $\bbC$.
		
	\end{itemize}
\end{definition}
To construct a suitable curve, let $N$ be such that $F(a_j)=a_{j+N}$. Choose  endpoints $b_j\in (a_{j-1}, a_{j})$ and $b_{j+1}\in (a_j, a_{j+1})$ of $\gamma_j$ so that  $[F(b_{j}), F(b_{j+1})] \supset [b_{j+N}, b_{j+N+1}]$ if $a_j$ is repelling, and $[F(b_{j}), F(b_{j+1})] \subset [b_{j+N}, b_{j+N+1}]$ if $a_j$ is attracting. Finally, choose $\gamma_j$ with endpoints $b_j, b_{j+1}$ to be circle arcs in the linearizing charts, all having the same small angular size, above repellers and below attractors as required. This guarantees that  $f(\gamma_j)$ is above $\gamma_j$. Note that we can construct a suitable curve arbitrarily close to the real axis. 

\begin{figure}
\includegraphics[width=0.47\textwidth]{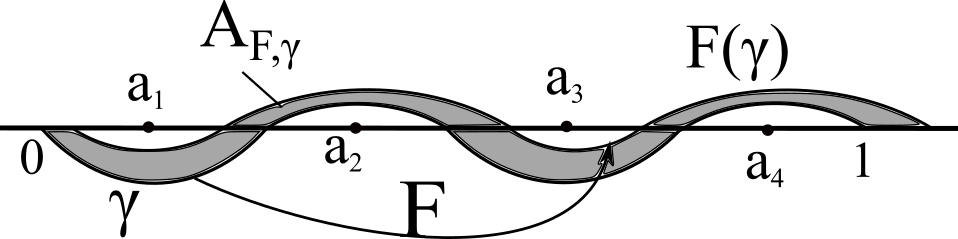}\hfil\includegraphics[width=0.47\textwidth]{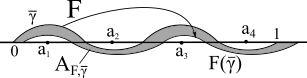}
\caption{Suitable (left) and anti-suitable (right) curves for the map $f$ with rotation number 0.5. The points $a_1, a_3$ and $a_2,a_4$ are lifts to $\bbR$ of the attracting and repelling periodic orbits of $f$ of period 2. }\label{fig-Buff}
\end{figure}

For a suitable curve  $\gamma$, let  $A_{F, \gamma}\subset \bbC$ be a fundamental domain of $F$ between  $\gamma$ and $F(\gamma)$.
Consider the complex torus $$E_\gamma(F)=A_{F, \gamma}/ (z\sim F(z), z\sim z+1).$$ It has two naturally  marked generators. Namely, we fix a point $x\in \gamma\cap \bbR$ and let the generators of this complex torus be the projections to $E_{\gamma}(F)$ of the curve  $\gamma$ and the curve that joins $x$ to $F(x)$ in $ A_{F, \gamma}$.

The following result is a refined version of the part of  Theorem \ref{bubble_theorem}  that concerns hyperbolic diffeomorphisms.
\begin{theorem}[\cite{NG2012-en}]
	\label{th-bubbles}
	Let $f$ be a hyperbolic analytic circle diffeomorphism, and let $F$ be its lift to the real line. Then the complex rotation number $\tau(F)$ equals the modulus of the complex torus $E_\gamma(F)$ for any suitable curve $\gamma$.
\end{theorem}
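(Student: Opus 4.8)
The plan is to evaluate $\tau(F)=\lim_{\omega\in\bbH,\ \omega\to 0}\tau_F(\omega)$ directly from the definition of $\tau_F(\omega)$, but to realise each complex torus $T_{f+\omega}$ by a fundamental domain adapted to the curve $\gamma$ rather than to $\bbR/\bbZ$, and then to pass to the limit $\omega\to 0$. Throughout, $F+\omega$ denotes the lift $z\mapsto F(z)+\omega$, and all curves live in $\bbC$. Fix a suitable curve $\gamma$ for $f$. Since $\gamma$ is $1$-periodic, close to $\bbR$, and $F(\gamma)$ lies (weakly) above $\gamma$, for every $\omega\in\bbH$ with $|\omega|$ small enough $F+\omega$ is univalent on a neighbourhood of the closed $1$-periodic region between $\gamma$ and $(F+\omega)(\gamma)=F(\gamma)+\omega$, and $(F+\omega)(\gamma)$ now lies \emph{strictly} above $\gamma$ (adding $\omega$ with $\Im\omega>0$ lifts $F(\gamma)$ upward, in particular off the real axis at the points where $\gamma$ meets $\bbR$). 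Call this region $A_{F+\omega,\gamma}$. It is a fundamental domain for the group generated by $F+\omega$ and $z\mapsto z+1$, so the quotient torus $T_{f+\omega,\gamma}:=A_{F+\omega,\gamma}/(z\sim(F+\omega)(z),\ z\sim z+1)$ is biholomorphic to $T_{f+\omega}$.

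Next I would check that the marked generators match, so that $\tau_F(\omega)$ is exactly the modulus of $T_{f+\omega,\gamma}$ taken with respect to the generators $\gamma$ and an arc $a'$ joining a fixed base point $x\in\gamma\cap\bbR$ to $(F+\omega)(x)$ inside $A_{F+\omega,\gamma}$. The ``$1$''-generators agree because $\gamma$ projects to a loop in $\bbC/\bbZ$ freely homotopic to $\bbR/\bbZ$ inside the thin neighbourhood on which $F+\omega$ is defined. The ``$\tau$''-generators agree in $H_1(T_{f+\omega})$ as well: concatenating $a'$, the real segment $[0,x]$, the image under $F+\omega$ of the reverse of $[0,x]$, and the reverse of the segment $[0,F(0)+\omega]$ yields a loop that lifts to a \emph{closed} (hence contractible) loop in the universal cover $\bbC$; unwinding the two deck identifications then forces the class of $a'$ to equal the class of $[0,F(0)+\omega]$, with no additional multiple of the class of $\bbR/\bbZ$ (which is the only correction that could otherwise change the modulus).

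Finally I would pass to the limit. As $\omega\to 0$ in $\bbH$, the domains $A_{F+\omega,\gamma}$ converge to $A_{F,\gamma}$, the gluing maps $F+\omega$ converge to $F$ uniformly near $\gamma$, and the base point stays fixed. The limiting torus $E_\gamma(F)=A_{F,\gamma}/(z\sim F(z),\ z\sim z+1)$ is non-degenerate: since $f$ is hyperbolic and $\gamma$ passes above the repelling and below the attracting periodic points, $\gamma$ and $F(\gamma)$ are disjoint, so $E_\gamma(F)$ is a genuine complex torus. Hence the moduli converge — this is the Ahlfors--Bers dependence of the modulus on the gluing data that already underlies Proposition~\ref{crot_analytic_dependence_prop}. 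Combining this with Theorem~\ref{Cont_ext_theorem},
\[
\tau(F)=\lim_{\substack{\omega\in\bbH\\ \omega\to 0}}\tau_F(\omega)=\lim_{\substack{\omega\in\bbH\\ \omega\to 0}}\big(\text{modulus of }T_{f+\omega,\gamma}\big)=\text{modulus of }E_\gamma(F).
\]
Since $\gamma$ was an arbitrary suitable curve, the theorem follows (equivalently, the tori $E_\gamma(F)$ for different suitable $\gamma$ are all biholomorphic, with the same marking, being models for the quotient of a one-sided neighbourhood of $\bbR/\bbZ$ by $f$).

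The main obstacle is twofold: (i) the homology bookkeeping in the second step, i.e. making sure that replacing the standard fundamental domain by $A_{F+\omega,\gamma}$ does not shift the $\tau$-generator by an integer multiple of the $1$-generator, which would alter the modulus by an integer; and (ii) the uniform control of the fundamental domains $A_{F+\omega,\gamma}$ as $\omega\to 0$ so that they converge to a non-degenerate limit — this is precisely where hyperbolicity of $f$ enters, and it is what makes $\tau(F)$ lie strictly inside $\bbH$.
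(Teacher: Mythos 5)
This theorem is imported from \cite{NG2012-en}; the present paper does not reprove it, but gives exactly the heuristic your argument formalizes (``the complex torus $T_{f+\omega}$ does not degenerate as $\omega\to 0$: $f$ has a fundamental domain $A_f$ near $\bbR/\bbZ$, and $\tau(F)$ is the modulus of $A_f/f$''). Your three steps --- (1) re-realizing $T_{f+\omega}$, for small $\omega\in\bbH$, as the torus glued from the region between $\gamma$ and $(F+\omega)(\gamma)$; (2) matching the marked generators; (3) passing to the limit via Ahlfors--Bers/Risler continuity of the modulus --- are precisely the argument the authors have in mind, so this is the same approach, not a different route.

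Two remarks on the places you yourself flag as delicate. On (i), the homology bookkeeping, the cleanest way to say it is that the $2$-chain bounded by $[0,x]$, $a'$, $(F+\omega)([0,x])$ and $[0,F(0)+\omega]$ shows $[a']-[[0,F(0)+\omega]] = [[0,x]] - [(F+\omega)([0,x])]$ in $H_1(T_{f+\omega})$, and the right-hand side vanishes because $[0,x]$ and its $(F+\omega)$-image project to the same arc; no wrapping correction by $[\bbR/\bbZ]$ can appear since both arcs lie inside one fundamental domain. On (ii), the convergence is exactly \cite[Ch.~2, Prop.~2]{Ris} applied to the analytic family of gluings $\omega\mapsto (A_{F+\omega,\gamma},\,F+\omega)$; the hypothesis that makes this work at $\omega=0$ is that $F(\gamma)$ lies \emph{strictly} above $\gamma$ (so $A_{F,\gamma}$ is a genuine annulus), which is how hyperbolicity and the ``above repelling/below attracting'' placement of $\gamma$ enter. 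You should make the strictness explicit rather than inferring it, since the limiting non-degeneracy is the whole point.

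Finally, note that your argument never uses the requirement that $\gamma\setminus\bbR$ consists of circular arcs in the linearizing charts. That hypothesis is not needed for Theorem~\ref{th-bubbles} itself; in this paper it is used downstream, in the construction of the curves $\xi_1,\xi_2$ in the proof of Theorem~\ref{th-renorm-bubbles}, where one needs $\Psi(\xi_1)$ to again be (anti-)suitable. So your proof in fact establishes a slightly more general statement; this is fine, but worth being aware of when the ``suitable'' structure is later exploited.
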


We will say that a curve $\gamma$ is \textit{anti-suitable} for $F$ if its complex conjugate is a suitable curve for $F$. The corresponding complex torus $E_\gamma(F)$ for an anti-suitable curve $\gamma$ is defined in exactly the same way as before. Note that $F(\gamma)$ is below $\gamma$ in this case.  If we consider generators of this torus with orientation, we can see that they are oriented in a non-standard way and thus the modulus of $E_{\gamma}(F)$ for an anti-suitable $\gamma$ is located in the lower half-plane. Note that the moduli of $E_{\gamma}(F)$ and $E_{\overline{\gamma}}(F)$ differ by complex conjugation. 

Thus we have the following statement: 

\begin{lemma}\label{lem-bubbles-anti}
Let $f$ and $F$ be the same as in Theorem~\ref{th-bubbles}. Then the complex rotation number $\tau(F)$ is complex conjugate to the modulus of the torus $E_\gamma(F)$ for any anti-suitable curve $\gamma$.
\end{lemma}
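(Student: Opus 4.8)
The plan is to deduce Lemma~\ref{lem-bubbles-anti} from Theorem~\ref{th-bubbles} by applying complex conjugation. By the very definition of anti-suitability, the complex conjugate curve $\overline\gamma$ is a suitable curve for $F$, so Theorem~\ref{th-bubbles} gives immediately that $\tau(F)$ equals the modulus of $E_{\overline\gamma}(F)$. Hence it suffices to prove the single identity $\mathrm{mod}\,E_\gamma(F)=\overline{\mathrm{mod}\,E_{\overline\gamma}(F)}$, i.e. that the moduli of $E_\gamma(F)$ and $E_{\overline\gamma}(F)$ differ by complex conjugation.

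\textbf{The conjugation map.} Since $f$ preserves $\bbR/\bbZ$, its analytic continuation commutes with $c\colon z\mapsto\bar z$, so the chosen lift satisfies $F\circ c=c\circ F$; moreover $c$ commutes with $z\mapsto z+1$ and fixes the base point $x\in\gamma\cap\bbR$ (so $\bar x=x$). Consequently $c$ carries a curvilinear fundamental domain $A_{F,\gamma}$ of $F$ between $\gamma$ and $F(\gamma)$ onto a curvilinear fundamental domain of $F$ between $\overline\gamma$ and $F(\overline\gamma)=\overline{F(\gamma)}$, that is, onto a domain of the form $A_{F,\overline\gamma}$; and it intertwines each of the gluing relations $z\sim F(z)$ and $z\sim z+1$ with itself. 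Therefore $c$ descends to an \emph{anti-holomorphic} isomorphism $E_\gamma(F)\to E_{\overline\gamma}(F)$ which takes the two marked generators of $E_\gamma(F)$ — the projection of $\gamma$ and the projection of the arc joining $x$ to $F(x)$ — to the two marked generators of $E_{\overline\gamma}(F)$, in the same order. An orientation-reversing isomorphism of complex tori that respects the ordered pair of marked generators sends the modulus $\tau$ of $\bbC/(\bbZ+\tau\bbZ)$ to $\bar\tau$, so $\mathrm{mod}\,E_\gamma(F)=\overline{\mathrm{mod}\,E_{\overline\gamma}(F)}=\overline{\tau(F)}$; since conjugation is an involution this is exactly the assertion that $\tau(F)=\overline{\mathrm{mod}\,E_\gamma(F)}$.

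\textbf{Main obstacle.} There is no analytic difficulty here; the entire content is the orientation bookkeeping in the last step — verifying that $c$ sends the ordered pair of marked generators of $E_\gamma(F)$ to the ordered pair of marked generators of $E_{\overline\gamma}(F)$, so that the modulus transforms precisely by $\tau\mapsto\bar\tau$ rather than by a more complicated element of $GL(2,\bbZ)$. This is also what makes the modulus of $E_\gamma(F)$ for an anti-suitable $\gamma$ land in the lower half-plane, consistently with the discussion preceding the statement. Once the marking convention is pinned down precisely, the proof is the two paragraphs above.
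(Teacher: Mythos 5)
Your proof is correct and follows essentially the same route as the paper's: the paper's justification is precisely the discussion preceding the lemma, namely that $\overline\gamma$ is suitable (so Theorem~\ref{th-bubbles} applies to it) and that the moduli of $E_\gamma(F)$ and $E_{\overline\gamma}(F)$ differ by complex conjugation via the anti-holomorphic map $z\mapsto\bar z$. You have merely spelled out the conjugation/marking bookkeeping that the paper leaves implicit.
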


\subsubsection{Proof of Theorem~\ref{th-renorm-bubbles}}

\begin{proof}
In the course of the proof we will assume that the diffeomorphism $f$, and thus $\mathcal Rf$, is hyperbolic. Otherwise, according to Theorem~\ref{bubble_theorem}, the complex rotation numbers $\tau(F)$ and $\tau(\mathcal R F)$ coincide with the regular (real) rotation numbers $\rot F$ and $\rot \mathcal R F$ respectively, and the relation $\tau(\mathcal R F)= T_{p/q}(\tau(F))$ becomes obvious.

Consider a renormalizable hyperbolic map $f$ with $n(f) = q_m$, and the curve $\gamma\subset \bbC$ suitable for it (in particular, $\gamma$ is $1$-periodic). For each $k\ge 0$, let $A_k$ be the strip in $\bbC$ between $F^k(\gamma)$ and $F^{k+1}(\gamma)$, provided that this strip is defined.  Choose $\gamma$ close to the real axis so that $A_k$ is defined for all $k\le q_{m+1}+2q_{m}$, and $F$ is univalent in $A = \bigcup_{k=0}^{q_{m+1}+2q_{m}} A_k$ (we will possibly have to choose $\gamma$ even closer to $\bbR$ in what follows).

Consider the complex torus $T = A / (z\sim F(z), z\sim z+1)$. Since $A_k=F(A_{k-1})$ and $F$ is univalent in $A$, this torus coincides with $A_0/(z\sim F(z), z\sim z+1)$; since $\gamma$ is suitable for $F$, the modulus of the torus $T$ is equal to the complex rotation number $\tau(F)$. The uniformizing map $\Xi$ that takes $T$ to a standard torus, lifts and extends by iterates of $F$ to the map $\psi \colon A \to \bbC$ that conjugates $F$ and $z\mapsto z+1$ to shifts by $\tau(F)$ and by $1$ respectively.  

Let $F_m = F^{q_{m}}-p_{m}$ and $F_{m+1} = F^{q_{m+1}}-p_{m+1} $. 
Construct the curves $\xi_1, \xi_2\in A$ such that: 
\begin{itemize}
\item The curves $\xi_1, \xi_2, F_{m}(\xi_2), F_{m+1}(\xi_1)$ form a non-self-intersecting boundary of  a curvilinear rectangle $W$ in $A$;
\item  This rectangle belongs to $R\cup F_{m}(R)\cup F_m^{-1}(R)$, where $R$ is the domain of definition of the uniformizing map $\Psi$ used in the definition of renormalization;
\item $\Psi(\xi_1)$ is a suitable curve for computing $\tau(\mathcal Rf)$ for odd $m$, and an anti-suitable curve for even $m$.
\end{itemize}

We postpone the construction of $\xi_1, \xi_2$ and complete the proof of the theorem modulo this construction.  

Note that $\Xi$ conjugates $F_m$ and $F_{m+1}$ to the shifts by $ q_m\tau(F)-p_m$ and $ q_{m+1}\tau(F)-p_{m+1}$;  due to our assumptions on $\xi_1, \xi_2$, the rectangle $\Xi(W)$ is a fundamental domain for these shifts, thus $W$ is a fundamental domain for $F_m, F_{m+1}$ in $A$. Also, the modulus of the torus
$$
E = A/(F_m, F_{m+1}) = W/(F_m, F_{m+1}) 
$$
with generators $\xi_1, \xi_2$ equals $\frac{q_{m+1}\tau(F) - p_{m+1}}{q_{m} \tau(F)-p_{m}}=-T_{p/q}(\tau(F))$.

Recall that $\Psi $ conjugates $F_m$ to $z\to z-1$ and is defined in $R$; here $F$ is close to  rotations and thus univalent in $\Pi_h$. Hence $\Psi$ extends to $R\cup F^{m}(R)\cup F^{-m}(R)$ via the relation $\Psi(F^m(z))=\Psi(z)-1$. So $\Psi$ is defined on a neighborhood of $W$.   Since $F_{m+1}$ coincides with the first-return map to $[0, F_m(0)]$ on a neighborhood of $F_m(0)$, the map  $\Psi$ conjugates $F_{m+1}$ to the lift of $\mathcal Rf$ that satisfies $ \mathcal RF(-1) \in [-1, 0)$, i.e. to the lift from the statement of the theorem. Since the map $\Psi$ is defined in $W$, it  descends to a biholomorphism between the torus $E$ and $$\tilde E =  \Psi(W) / (z\to z-1, z \to \mathcal RF(z)).$$
Thus the modulus of the torus $\tilde E$ also equals $-T_{p/q}(\tau(F))$.

Finally, $\Psi(\xi_1)$ is a suitable curve for odd $m$  for computing $\tau(\mathcal R F)$  (and anti-suitable for even $m$). The second generator $\Psi(\xi_2)$ joins a certain point $y\in \Psi(\xi_1)$ to $\Psi F_{m+1}\Psi^{-1}(y) = \mathcal RF(y)$. Thus the complex torus $\tilde E$ coincides with $E_{\Psi(\xi_1)}(\mathcal RF)$, modulo the sign change of the first generator. We conclude that the modulus of the torus $E_{\Psi(\xi_1)}(\mathcal RF)$ equals $T_{p/q}(\tau(f))$. 

Due to Theorem \ref{th-bubbles} and Lemma \ref{lem-bubbles-anti}, this completes the proof, modulo construction of $\xi_1, \xi_2$.

\begin{figure}[h]
	\includegraphics[width=\textwidth]{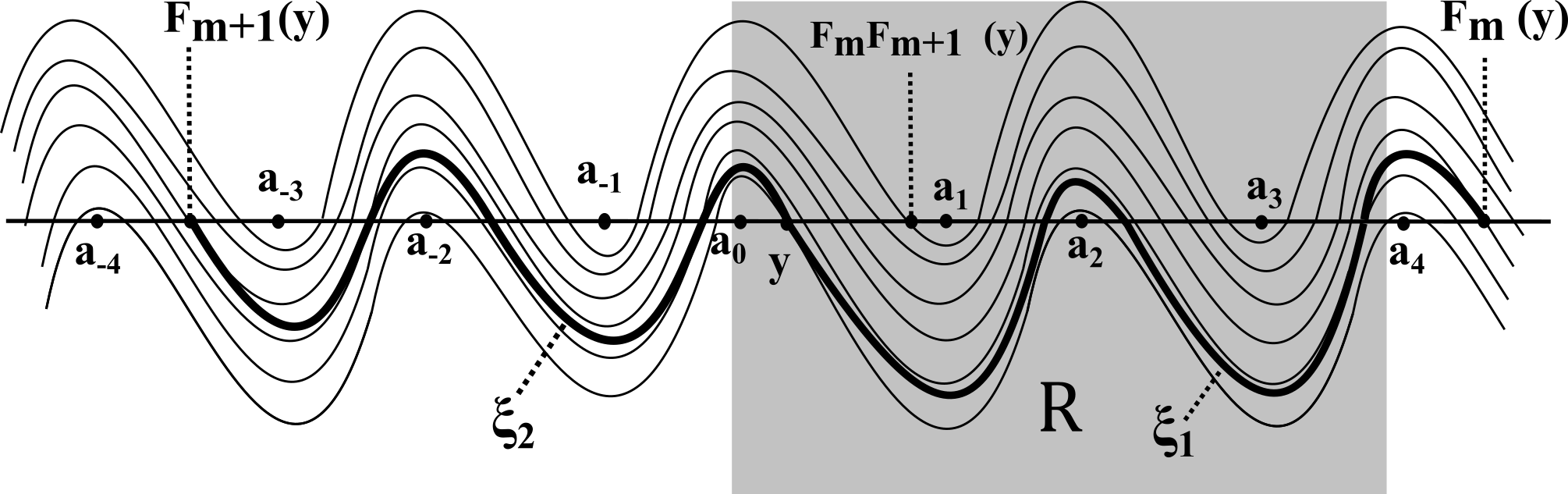}
	\caption{The strip $A$, the domain $R$ (shadowed), and the curves $\xi_1, \xi_2$ (both shown in thick). Here $\rot f = 3/5$, $p_m/q_m = 1/2$, and $p_{m+1}/q_{m+1}=2/3$.}\label{Strip_A_Fig}
\end{figure}

\textbf{Construction of $\xi_1, \xi_2$.}

Assume $m$ is even, so $F^{q_m}(0)-p_m>0$. 
Let $a_j\in \bbR$ be the lifts of periodic points of $f$ to $\bbR$, ordered from left to right. 

Since $q_m<q$, the interval $[0, F^{q_m}(0)-p_m]$ contains periodic points of $f$; let $a_0$ be the leftmost of these points. Take a point $y\in\gamma\cap \bbR$ that is closest to $a_0$ from the right; we have $y\in R$. Let $y_{1}:=F_m(y)\in F_m(R)$, $y_{1}\in A_{q_m-1}\cap A_{q_m}$. Note that $y_{1}$ belongs to the linearizing chart of a periodic point $a_s = F^{q_{m}}(a_0)-p_{m}=F_m(a_0)$. Join $y$ to $y_{1}$  by a curve $\xi_1\subset A$ that intersects each linearizing domain of $a_j$ for $j=0, 1, \dots, s$ by an arc of a circle and visits $A_0, A_1,\dots, A_{q_{m}-1}$ subsequently. If $\gamma $ was chosen sufficiently close to the real line, then $\xi_1$ stays in $R\cup F_m(R)$.

Let $\xi_2$ be constructed in a similar way to join $y$ to $y_2 = F_{m+1}(y)$; note that $\xi_2$ belongs to $(F_{m})^{-1}(R)$ if $\gamma $ is sufficiently close to the real line. It is easy to see that $\xi_1, \xi_2, F_{m+1}(\xi_1),$ and $F_m(\xi_2)$ bound a fundamental domain $W$ that belongs to a neighborhood of a subinterval of   $[(F_m)^{-1}(0), F_m(0) ]$ and thus to $R\cup F_m(R)\cup (F_m)^{-1}(R)$.

It remains to prove that $\Psi(\xi_1)$ is a suitable curve for computing $\tau(\mathcal RF)$. The curve $\Psi(\xi_1)$ is a loop in $\bbC/\bbZ$. Since linearizing charts of the periodic orbits of $\mathcal R f$ differ from those for $f$ by conjugacies with $\Psi$, the curve $\Psi(\xi_1)$ consists of arcs of circles in the linearizing charts of the periodic points of $\mathcal Rf$.   On a neighborhood of $[0, F_m(0)]$, the first-return map $P$ to $R$ is given either by  $F_{m+1}$ or by $F_m\circ F_{m+1}$. Recall that $\xi_1$ visits only the strips $A_k$ with $0<k<q_{m}-1$; thus $P(\xi_1)$ is located in the union of the strips $A_k$ with $q_{m+1}<k<q_{m+1}+2q_{m}$, hence is above $\xi_1$ in $A/ F_m$. 
Since $m$ is even, $\Psi$ reverses orientation, thus $\Psi(\xi_1)$ is anti-suitable for computing  $\tau(\mathcal Rf)$.

The proof for the case of an odd $m$ is similar. In that case, $F_m(0)$ is to the left from $0$; also, $\xi_1$ will go to the left, joining $y$ to $F_m(y)$, and $\xi_2$ will go to the right. Since $\Psi$ preserves orientation, it takes $\xi_1$ to a suitable curve in this case.

This completes the proof.

\end{proof}

Recall that $D_{\eps, r}$ is the  disc of diameter $\eps$ in the upper halfplane that is tangent to $\bbR$ at $r$. We will need the following general lemma that will be applied in the more specific case of the maps $T=T^{-1}_{p/q} \in PSl(2,\bbZ)$.

\begin{lemma}
\label{lem-PSL}
Assume, $k,l\in\bbZ$ are relatively prime integers and $T\in PSl(2, \bbZ)$ is a M\"obius transformation, such that $T(k/l) = \tilde k/\tilde l \neq\infty$, where the integers $\tilde k$ and $\tilde l$ are relatively prime. Then for any $\eps>0$, the map $T$ takes the disk $D_{\eps |l|^{-2}, k/l}$ to the disk $D_{\eps |\tilde l|^{-2}, \tilde k/\tilde l}$.
\end{lemma}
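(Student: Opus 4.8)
The plan is to reduce the general statement to the case $k/l = 0$ by a change of coordinates, then handle that case by a direct computation. First I would observe that the collection of discs $\{D_{\delta, r}\}$ tangent to $\bbR$ from above is naturally parametrized by the half-plane of ``horocycle parameters'': writing each such disc as the set $\{z\in\bbH \mid \Im(-1/(z-r)) > 1/\delta\}$, the image of $D_{\delta,r}$ under a M\"obius map $S$ fixing $\infty$ and sending $r$ to $r'$ is again a disc tangent at $r'$, and the new diameter $\delta'$ is computed by conjugating the defining horocycle inequality by $S$. So the key point is to track how the ``depth'' $1/\delta$ transforms. For $S(z) = -1/z$ composed with translations — i.e.\ elements of $PSl(2,\bbZ)$ — this transformation of depths is governed by the denominator of the fixed rational point, exactly as in the statement.

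To make this precise I would argue as follows. Since $\gcd(k,l)=1$, choose a matrix $A = \begin{pmatrix} k & * \\ l & * \end{pmatrix} \in Sl(2,\bbZ)$ with first column $(k,l)^T$; then $A^{-1} \in PSl(2,\bbZ)$ sends $k/l$ to $0$ and the horocycle $\{\Im(-1/(z-k/l)) = 1/\eps\}$ of ``depth $1/\eps$ at $k/l$'' to the horocycle $\{\Im(-1/z) = |l|^2/\eps\}$ of depth $|l|^2/\eps$ at $0$; this is the standard computation, since the derivative of a M\"obius map at a fixed point of a horocycle scales depths by the square of the relevant denominator, and $A^{-1}$ contracts near $k/l$ by the factor $l^{-2}$ in the appropriate sense. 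Concretely, $D_{\eps|l|^{-2}, k/l}$ is exactly the disc at $k/l$ of depth $|l|^2 \cdot |l|^2 / \eps = $ wait — more carefully: $D_{\eps|l|^{-2},k/l}$ has diameter $\eps|l|^{-2}$, hence depth $|l|^2/\eps$, and $A^{-1}$ sends it to the disc at $0$ of depth $|l|^2/\eps \cdot |l|^{-2} = 1/\eps$, i.e.\ to $D_{\eps, 0}$. Similarly, choosing $B = \begin{pmatrix} \tilde k & * \\ \tilde l & * \end{pmatrix}\in Sl(2,\bbZ)$, the map $B$ sends $D_{\eps, 0}$ to the disc at $\tilde k/\tilde l$ of depth $1/\eps \cdot |\tilde l|^2 = |\tilde l|^2/\eps$, which is $D_{\eps |\tilde l|^{-2}, \tilde k/\tilde l}$.

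It then remains to observe that $T = B \circ A^{-1}$ up to an element of $PSl(2,\bbZ)$ fixing $0$. But any element of $PSl(2,\bbZ)$ that fixes $0$ and $\infty$ is the identity, and more to the point: $B^{-1} T A$ fixes $0$; if it also fixed $\infty$ we would be done immediately, but it need not. So instead I would note that $T$ and $B \circ A^{-1}$ are two elements of $PSl(2,\bbZ)$ both sending $k/l \mapsto \tilde k/\tilde l$; their ``quotient'' $B^{-1} T A \in PSl(2,\bbZ)$ fixes $0$, hence is of the form $z \mapsto z/(mz+1)$ for some $m\in\bbZ$, which sends horocycles at $0$ to horocycles at $0$ preserving depth (its derivative at $0$ is $1$). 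Therefore $T$ and $B\circ A^{-1}$ act identically on depths-at-$0$ on the target side as well, and since $B\circ A^{-1}$ sends $D_{\eps|l|^{-2},k/l}$ to $D_{\eps|\tilde l|^{-2},\tilde k/\tilde l}$ by the two computations above, so does $T$.

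The main obstacle, and the only genuinely computational part, is verifying the elementary claim that a M\"obius map $z\mapsto (az+b)/(cz+d)$ with $ad-bc=1$ taking a finite point $r$ to a finite point $r'$ scales the depth of a horocycle tangent at $r$ by the factor $(cr+d)^{-2}$; equivalently, that it sends $D_{\delta, r}$ to $D_{\delta (cr+d)^2, r'}$. This is a direct substitution — one writes the horocycle at $r$ as $\{|z - (r + i\delta/2)| = \delta/2\}$, applies the M\"obius map, and identifies the image circle — but it is exactly the kind of routine calculation I would relegate to a line or two, using $(cr+d)$ as the local scaling factor. Once that lemma is in hand, the reduction above is purely formal and uses only that all the relevant denominators $l, \tilde l$ appear squared because $ad-bc=1$ forces the numerator and denominator of $T(k/l)$ in lowest terms to have denominator $|cr_{\mathrm{num}} + d r_{\mathrm{den}}| = $ (up to sign) the integer controlling the scaling.
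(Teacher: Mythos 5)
Your reduction to the base case $k/l=0$ by pre- and post-composition with elements of $Sl(2,\bbZ)$ is a legitimate alternative to the paper's argument, which instead composes $T$ directly with the chart $z\mapsto -1/z+k/l$ and reads the diameter off the resulting fractional-linear map via the decomposition into shift, inversion and dilation. Both routes ultimately boil down to the same fact, namely that $T'(k/l)=l^2/\tilde l^2$ and that tangent discs scale by the modulus of the derivative at the tangency point; yours makes the role of the $PSl(2,\bbZ)$--stabilizer of $0$ explicit and is perhaps a bit more conceptual, while the paper's is a one-pass computation.

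Two slips, both fixable and both of the sign/index variety. First, with the standard convention $\begin{pmatrix}a&b\\c&d\end{pmatrix}\colon z\mapsto(az+b)/(cz+d)$, a matrix with \emph{first} column $(k,l)^T$ sends $\infty$, not $0$, to $k/l$; consequently $A^{-1}$ sends $k/l$ to $\infty$. You need $A$ with \emph{second} column $(k,l)^T$ (so that $A(0)=k/l$, hence $A^{-1}(k/l)=0$), and likewise for $B$. Once corrected, your derivative-at-$k/l$ computation for $A^{-1}$ (namely $l^2$) and the depth bookkeeping go through as you wrote them. Second, your closing statement of the elementary scaling lemma has the exponent inverted: for $g(z)=(az+b)/(cz+d)$ with $ad-bc=1$ one has $g'(r)=(cr+d)^{-2}$, so the \emph{diameter} scales by $(cr+d)^{-2}$ and the depth by $(cr+d)^{2}$, i.e.\ $g$ sends $D_{\delta,r}$ to $D_{\delta(cr+d)^{-2},\,r'}$, not $D_{\delta(cr+d)^{2},\,r'}$. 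Your earlier explicit computation with $A^{-1}$ uses the correct scaling, so this is an isolated typo in the summary rather than a flaw in the argument, but it should be corrected to avoid contradicting the rest of the proof.
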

\begin{proof}
Consider the map  $T= \frac{az+b}{cz+d}$, where $ad-bc=1$. Put  $\tilde k = ak +bl$, $\tilde l = ck+dl$; then $\tilde k/\tilde l = T(k/l) $.
Since
$k = d\tilde k - b\tilde l$ and $l = -c\tilde k + a\tilde l$, 
it follows that the greatest common divisor of $k$ and $l$ coincides with the one for $\tilde k$ and $\tilde l$. Thus, $\tilde k$ and $\tilde l$ are relatively prime.

 For any $A,B,C,D\in \bbR$, $C\neq 0$, the image of the half-plane $\{\Im z > 1/\eps\}$ under the map $g(z)=\frac{Az+B}{Cz+D}$ is a disk of diameter $\eps |BC-AD|\cdot |C|^{-2}$ that is tangent to $\bbR$ at $A/C$. Indeed,
 $$
 g(z)=\frac{Az+B}{Cz+D} = \frac{A}{C} + \frac{(BC-AD)/C^2}{z + D/C}
 $$ 
 so $g$ is a composition of the shift by $D/C$, followed by the map $1/z$, multiplication by $(BC-AD)/C^2$,  and the shift by $A/C$, which implies the statement.

 The disc $D_{\eps |l|^{-2}, k/l}$ itself is the image of the half-plane $\{\Im z > 1/(\eps |l|^{-2})\}$ under the map $z\mapsto -1/z+k/l$.
We get that $T(D_{\eps |l|^{-2}, k/l})$ coincides with the  image of $\{\Im z > 1/(\eps |l|^{-2})\}$ under the composition 
$$
z\mapsto \frac{a(-1/z+k/l)+b}{c(-1/z+k/l) + d} = \frac{-a+z (a (k/l)+b)}{-c + z(c (k/l) +d) }.
$$
According to the argument from the previous paragraph, $T(D_{\eps |l|^{-2}, k/l})$ is the disk of diameter $\eps |l|^{-2}  |c(k/l)+d|^{-2} = \eps |\tilde l|^{-2}$, tangent to the real axis at the point $\frac{a (k/l)+b}{c (k/l) +d} = \frac{\tilde k }{\tilde l }$.
\end{proof}
\subsection{Proof of Theorem \ref{th-main}}
\label{sec-proof}

\begin{proof}[Proof of Theorem \ref{th-main}]

Switching to the linearizing chart of $f_{t_0}$, we will now work with a family $f_t$ that passes through the rotation $f_{t_0}=R_\alpha$. Note that this family is strictly monotonic: $\frac{\partial }{\partial t} f_t>0$. Suppose that $f_t$ for  $t\approx t_0$ are defined in a strip of width $h_1$ around $\bbR/\bbZ$.

Without loss of generality, we may assume that $t_0=0$ in the statement  of Theorem~\ref{th-main}.

\textbf{Step 1: Reduction to the case of a wide strip.}

 First, let us pass to the case $h_1>h$, where $h$ satisfies the assumptions of Theorem \ref{th-renorm}, i.e. $h>c_1\Phi(\beta)+c_2$ for all $\beta$ of type bounded by $k$; then we will be able to use the result of Theorem \ref{th-renorm} on hyperbolicity of the renormalization operator.

To this end, recall that our renormalization operator $\mathcal R_{h_1}\colon \mathcal U_{h_1}\to \mathcal D_{h_1}$ was defined as the restriction of the map  $\Psi P \Psi^{-1}$ to the strip of width $h_1$. However, with the proper choice of $\Psi$, we can guarantee that $\Psi P \Psi^{-1} $ is defined in a much wider strip. Let $l_s=\alpha_0\alpha_1\dots\alpha_{s-1}$ be the length of the  fundamental domain $[0, (R_{\alpha})^{q_s}(0)-p_s]$ for the rotation $R_\alpha$. For any $f\in \mathcal D_{h_1}$ close to $R_\alpha$, consider a curvilinear rectangle bounded by $I^h = [-0.99ih, 0.99ih]$, $f^{q_s}(I^h)$, and two straight line segments joining their endpoints. Let $\Psi$ be any biholomorphic map in $R$ that conjugates $f^{q_s} $ to the shift by $(-1)$ and depends analytically on $f$.  For $f$ close to the rotation $R_\alpha$, the map $\Psi P \Psi^{-1}$ is defined on the strip of width almost $0.99 h_1/l_s$.  Find an integer $s$ so that we have $c_1\Phi(\beta)+c_2 <0.5 h_1 / l_s$, where $c_1, c_2$ are the same as in Theorem \ref{th-renorm}, for all $\beta$ of type bounded by $k$. This is possible since $l_s\to 0$ and $\Phi(\beta)$ is bounded (Lemma \ref{lem-B-bound}). 
Note that we can choose $s=s(k)$ uniformly for all $\alpha\in M_k$.

 Now, let $h=0.9 h_1/l_s$. Consider the corresponding renormalization operator $\mathcal R_{h_1, h, s} \colon \mathcal D_{h_1} \to \mathcal D_{h}$,  defined by $\mathcal R_{h_1, h, s} f = \Psi^{-1} P \Psi|_{\Pi_h}$ where $P$ is the first-return map to $[0, f^{q_s}(0)-p_s]$ as before.  The smooth family of circle maps  $g_t = \mathcal R_{h_1, h, s} f_t$, $g_0=R_{\alpha_s}$,   is defined (for small $t$) in a strip $\Pi_h$  with the property that $h>c_1\Phi(\beta)+c_2$ for all  $\beta$ of type bounded by $k$. 

The monotonicity of the family $g_t$ is not necessarily preserved, since the rescaling that appears in the renormalization can depend on $t$ in a nontrivial way. Because of that, we need to consider a weaker property that remains invariant under renormalizations. Note that any strictly  monotonic family satisfies $\int_{\bbR/\bbZ} \frac{df_t}{dt} dz > 0$, i.e. is transversal to the subspace $\{v\in T\mathcal D_{h_1} \mid \int_{\bbR/\bbZ} v =0\}$ of the tangent space $T\mathcal D_{h_1}$.
\begin{lemma}
Let $\{f_t\}_{t\in I}\subset \mathcal D_{h_1}$ be a smooth family of analytic circle diffeomorphisms with $f_0=R_\alpha$, where $\alpha$ is an irrational number. If this family is transversal to the subspace $\{v\in T\mathcal D_{h_1} \mid \int_{\bbR/\bbZ} v =0\}$ at $t=0$, then the family $g_t=\mathcal R_{h_1, h, s} f_t$ is transversal to the subspace $\{v\in T\mathcal D_{h} \mid \int_{\bbR/\bbZ} v =0\}$ at $t=0$. 
\end{lemma}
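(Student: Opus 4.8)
The plan is to show directly that $\int_{\bbR/\bbZ}\dot g_0\neq0$, where $\dot g_0:=\frac{\partial g_t}{\partial t}\big|_{t=0}\in T_{R_{\alpha_s}}\mathcal D_{h}$ (transversality of $\{g_t\}$ to the hyperplane $\{v\in T\mathcal D_{h}:\int_{\bbR/\bbZ}v=0\}$ at $t=0$ is exactly this non-vanishing, and the hypothesis on $\{f_t\}$ reads $\int_{\bbR/\bbZ}\dot f_0\neq0$). Two ingredients go into this. First, since $t\mapsto f_t$ is a $C^1$ curve which for small $t$ stays in the domain of $\mathcal R_{h_1,h,s}$, and this operator is complex-analytic (Lemma~\ref{lem-domain}), the chain rule gives $\dot g_0=L(\dot f_0)$ with $L:=D\mathcal R_{h_1,h,s}\big|_{R_\alpha}$. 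Second, on a neighborhood of $R_\alpha$ the renormalization intertwines the rotation number with an iterate of the Gauss map: $\rot\circ\mathcal R_{h_1,h,s}=\sigma\circ\rot$, where $\sigma$ is a fixed M\"obius transformation whose coefficients are built from the convergents $p_s/q_s,\ p_{s+1}/q_{s+1}$ (locally constant at the irrational $\alpha$), with $\sigma(\alpha)=\alpha_s$ and $\sigma'(\alpha)\neq0$ (as $\alpha$ is irrational and $\sigma$ has integer coefficients).

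I would then invoke the classical fact, due to Herman, that the rotation number has, at a rotation $R_\beta$ with $\beta$ of bounded type, the first-order expansion $\rot\bigl(R_\beta+tv+o(t)\bigr)=\beta+t\int_{\bbR/\bbZ}v+o(t)$ along any $C^1$ curve; equivalently $\rot$ is differentiable at $R_\beta$ with differential $v\mapsto\int_{\bbR/\bbZ}v$. Both $\alpha$ and $\alpha_s$ are of bounded type, so we may differentiate the identity $\rot\circ\mathcal R_{h_1,h,s}=\sigma\circ\rot$ at $R_\alpha$ along the curve $t\mapsto f_t$. Since $\mathcal R_{h_1,h,s}(f_t)=g_t$, the left-hand side yields $D\rot_{R_{\alpha_s}}(\dot g_0)=\int_{\bbR/\bbZ}\dot g_0$, while the right-hand side yields $\sigma'(\alpha)\,D\rot_{R_\alpha}(\dot f_0)=\sigma'(\alpha)\int_{\bbR/\bbZ}\dot f_0$. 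Therefore $\int_{\bbR/\bbZ}\dot g_0=\sigma'(\alpha)\int_{\bbR/\bbZ}\dot f_0\neq0$, which is the asserted transversality.

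The step carrying the weight is the differentiability input for $\rot$ at bounded-type rotations; this is where the arithmetic hypothesis on $\alpha$ (hence on all $\alpha_j$) is genuinely used, and a self-contained proof would derive it from solvability of the cohomological equation $w\circ R_\beta-w=\varphi$ (for $\varphi$ analytic of zero mean, solvable since $\beta$ is Diophantine) together with the continuity of the rotation number under $O(t^2)$ perturbations of a linearizable map. There is also an alternative route avoiding rotation-number differentiability altogether: using the cohomological equation, choose a family of conjugacies $H_t=\id+tw+o(t)$ with $w\circ R_\alpha-w=\dot f_0-c\,\mathbf 1$, where $c:=\int_{\bbR/\bbZ}\dot f_0$; then the conjugated family $\hat f_t=H_t^{-1}f_tH_t$ has $\dot{\hat f}_0=c\,\mathbf 1$, hence agrees with $R_{\alpha+tc}$ to first order, and because the renormalizations of conjugate maps are themselves analytically conjugate (by a family of conjugacies that reduces to a rotation at $t=0$) the mean of $\dot g_0$ equals the mean of $\frac{d}{dt}\big|_{t=0}\mathcal R_{h_1,h,s}(\hat f_t)=\frac{d}{dt}\big|_{t=0}\mathcal R_{h_1,h,s}(R_{\alpha+tc})=c\,\sigma'(\alpha)\,\mathbf 1$. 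Either way, one should also check the routine point that $\mathcal R_{h_1,h,s}$ — the one-shot renormalization on the $q_s$-fundamental domain — is analytically conjugate on the circle to a composition of the basic renormalizations of Theorem~\ref{th-renorm} and acts on rotations by $R_\beta\mapsto R_{\sigma(\beta)}$, so that everything above applies to it.
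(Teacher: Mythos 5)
Your argument takes a genuinely different route from the paper's, and it is conceptually attractive, but as written it proves a strictly weaker statement than the lemma. The paper differentiates $\Psi_t P_t\Psi_t^{-1}$ directly at $t=0$, using only that $f_0=R_\alpha$ (so $\Psi_0$ is affine and $P_0$ a shift): the derivative $\dot g_0$ then splits into a coboundary of the rotation $\mathcal R R_\alpha$, which has zero mean for free, plus $\Psi_0'\,\frac{d}{dt}P_t\big|_{t=0}$, which has a sign because $\frac{d}{dt}P_t$ is a sum of copies of $\dot f_0$ pushed along an $R_\alpha$-orbit. No arithmetic condition on $\alpha$ is used at any point. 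Your proof instead differentiates the intertwining $\rot\circ\mathcal R_{h_1,h,s}=\sigma\circ\rot$ and feeds in that $D\rot$ at a rotation is $v\mapsto\int_{\bbR/\bbZ} v$; the chain-rule bookkeeping and the observation $\sigma'(\alpha)\neq 0$ are correct, but the differentiability input you invoke — Herman's theory or the cohomological equation — requires $\alpha$ (and $\alpha_s$) to be at least Diophantine. The lemma is stated for an arbitrary irrational $\alpha$, and the cohomological-equation argument really does break for Liouville $\alpha$, so this is a genuine gap in the proof of the statement as given.

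The gap is repairable within your framework, because the directional differentiability of $\rot$ at $R_\alpha$ that you actually use holds for every irrational $\alpha$ by a softer argument that avoids small divisors entirely. Write $\rot(f_t)=\int_{\bbR/\bbZ}(f_t-\id)\,d\mu_t$ for an $f_t$-invariant probability measure $\mu_t$; since $f_t\to R_\alpha$ in $C^0$ and $R_\alpha$ is uniquely ergodic, every weak-$*$ limit of $\mu_t$ as $t\to 0$ is Lebesgue measure, whence $\int_{\bbR/\bbZ}\dot f_0\,d\mu_t\to\int_{\bbR/\bbZ}\dot f_0\,dx$ and $\rot(f_t)=\alpha+t\int_{\bbR/\bbZ}\dot f_0\,dx+o(t)$. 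Replacing the Herman/cohomological-equation input by this observation (at $R_\alpha$ and at $R_{\alpha_s}$) closes the gap without otherwise changing your argument, and also removes the need for the ``alternative route'' paragraph. With that fix, your proof is a valid alternative to the paper's: it trades the paper's hands-on computation with the rescaling chart $\Psi_t$ for a semiconjugacy argument that makes explicit why the unstable direction is characterized by $\int_{\bbR/\bbZ} v\neq 0$.
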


\begin{proof}

Let $\Psi_t, P_t$ be the chart on the fundamental domain and the first-return map associated with $f_t$.
Using the fact that $P_0$ is a shift (hence, $P_0'=1$) and $\Psi_0$ is an affine map,  we get 
\begin{multline*}
\frac d {dt}\mathcal R_{h_1,h, s} f_t|_{t=0}= \frac{d}{dt}\Psi_t P_t \Psi_t^{-1} |_{t=0}=\\= \frac{d}{dt}\Psi_t|_{P_0\Psi_0^{-1}, t=0} - \frac{d}{dt}\Psi_t|_{\Psi^{-1}_0, t=0} + \Psi'_0 \frac{d}{dt}P_t|_{\Psi_0^{-1},t=0}.
\end{multline*}

 If we set $\xi = \frac{d}{dt}\Psi_t|_{\Psi^{-1}_0(z), t=0}$, the first two summands take the form $\xi(\Psi_0 P_0 \Psi_0^{-1}(z)) - \xi(z)$, and since $\Psi_0P_0\Psi_0^{-1} = \mathcal R_{h_1, h, s} R_{\alpha}$ is a shift, the integral of the sum of the first two summands over the circle is zero. Since $P_t$ is the first-return map to the fundamental domain of $f$, its derivative with respect to $t$  is strictly positive and equals the sum of the derivatives  $\frac{d}{dt}f_t$ along the orbit of the rotation $f_0$. The map $\Psi_0$ is a linear expansion that preserves orientation for odd $s$ and  reverses orientation for even $s$.  In the first case,  the last summand has a strictly positive integral over the circle, and in the second case, it has a strictly negative integral. In both cases, $g_t$ is transversal to $\{v\in T\mathcal D_{h} \mid \int_{\bbR/\bbZ} v =0\}$ at $t=0$.
 \end{proof}
 
Due to  Theorem \ref{th-renorm-bubbles}, a certain M\"obius map $T_{p/q}\in PSl(2,\bbZ)$ takes the $p/q$-bubble of the family $f_t$ to the corresponding bubble of the family  $g_t=\mathcal R_{h_1, h, s} f_t$ or to its complex conjugate. Note that in a small neighborhood  of $\alpha\notin \bbR/\bbZ$, $n(f)=q_m$ is constant and the first $m+1$ terms of the continued fractional expansion are also constant, thus the map $T_{p/q}$ is the same M\"obius map for all $p/q$ in this neighborhood.

Due to  Lemma \ref{lem-PSL}, it is now sufficient to prove Theorem \ref{th-main} for  analytic families of circle diffeomorphisms  $f_t, f_0 = R_\alpha,$ that are transversal to $\{v\in T\mathcal D_{h} \mid \int_{\bbR/\bbZ} v =0\}$ and defined in a strip $\Pi_{h}$ such that $h>c_1\Phi(\beta)+c_2$ for all $\beta$ of type bounded by $k$. 

In particular, assumptions of Theorem \ref{th-renorm} are satisfied for $h$ and any rotation number from the forward orbit of $\alpha$ under the Gauss map $G$. 

\textbf{Step 2: Expansion and contraction rates}
Let $\|\cdot\|_h$ be the sup-norm in the strip $\Pi_h$.
Put
$$M_k = \{R_\alpha\mid \alpha\text{ of type bounded by }k\}.$$

Note that with our choice of the iterate $n=n(f)=q_m$ in the definition of $\mathcal R_h$, the derivative $(G^{m+1})'$ of the corresponding iterate of the Gauss map is uniformly bounded below on $[0,1]$, $|(G^{m+1})'|>\lambda>1$. Since $\mathcal R_h$ acts on rotations by the $m+1$-power of the Gauss map,  $$|d|_{R_\beta} \mathcal R_h \, 1| > \lambda$$ for all $\beta\notin K$.

Due to \cite{GY}, the stable distribution of $\mathcal R_h$ on the set $\{R_\alpha\mid \alpha\in\mathcal B\}$ is the codimension-1 subspace $\{v\in T\mathcal D_{h} \mid \int_{\bbR/\bbZ} v =0\}$. Furthermore, according to \cite[Theorem 4.6]{GY}, for any $h$ larger than a universal constant, for any vector field $v\in T\mathcal D_h$ satisfying $\int_{\bbR/\bbZ} v dz=0$, and for any $\alpha\in \mathcal B$, there exist a positive integer $l=l(\alpha, h)$ and a positive constant $c=c(\alpha,h)$, such that
$$
    \|d|_{R_\alpha}\mathcal R_h^l v\|_h\le c(\alpha, h)\cdot  0.1^l \|v\|_h.
$$ 
The corresponding universal constant is included in our restriction $h>c_1\Phi(\beta)+c_2$, hence the condition on $h$ is satisfied.   
Thus for each point $R_\alpha\in M_k$, we can choose $l$ so that  $\int v dx=0$ implies $\|d|_{R_\alpha}\mathcal R_h^l v\|_h< 0.5 \|v\|_h$. Since $\mathcal R_h$ is analytic on a neighborhood of a compact set $M_k$, we can choose $l$ uniformly for all $\beta\in M_k$. Thus, assuming that a sufficiently large value of $h$ is fixed, we conclude that there exist $l=l(k)$ and $\tau = \tau(k) = \sqrt[l]{0.5}<1$, such that for each  $R_\alpha\in M_k$ and for any $v$ with  $\int_{\bbR/\bbZ} v dx=0$, we have 
\begin{equation*}
\|d|_{R_\alpha}\mathcal R_h^l v\|_h< \tau^l \|v\|_h \text{ and } d|_{R_\alpha}\mathcal R_h^l v \text{ has zero average on }\bbR/\bbZ. 
\end{equation*}

\vskip 1 cm

The next two steps of the proof are known as ``the inclination lemma'' in the hyperbolic theory, see \cite{Palis}. The inclination lemma states that under the iterates of a hyperbolic  map, the surfaces that are transversal to the stable foliation will tend to the leafs of the unstable foliation in $C^1$ metric exponentially quickly. We did not find a suitable reference for the Banach space setting; the arguments below are heavily based on \cite{Walter} where a similar statement was proved for inverse iterates of operators in Banach spaces that have periodic hyperbolic points.

For the sake of simplicity, starting from this moment, we will work with the real-symmetric space of circle diffeomorphisms $\mathcal D_h^{\bbR}$ and the corresponding tangent bundle $T\mathcal D_h^{\bbR}$ as opposed to their complex versions.

In the rest of the proof, we will omit $h$ and write $\mathcal R$, $\|\cdot\|$ instead of $\mathcal R_h$, $\|\cdot \|_h$ respectively.

\textbf{Step 3: Slopes of curves: one step of  renormalization}

Let $p_u\colon T\mathcal D_h^{\bbR} \to \bbR$ be the operator $p_u\colon v\mapsto \int_{\bbR/\bbZ} v dz$, and let $p_s\colon T\mathcal D_h^{\bbR} \to \{v \in T\mathcal D_h^{\bbR}, \int_{\bbR/\bbZ} v=0\}$ be the operator $p_s \colon v \mapsto v-p_u v$. Here $s,u$ stand for ``stable'' and ``unstable''.  Let $$s(v) = \frac{\|p_s(v)\| }{ |p_u(v)|}$$ be the \emph{slope} of the vector. 

Let $P_u\colon \mathcal D_h^{\bbR} \to \bbR$ be given by $P_u f = \int_{\bbR/\bbZ} (f(z)-z) dz.$ Let   $P_s f = f- P_u f -\id$; then $P_s f$ is a 1-periodic map on $\Pi_h$ with zero integral over $\bbR/\bbZ$. Note that $\|P_s f\|$ is a distance between $f$ and rotations. 

Recall that $\tau^l<1$ is a uniform estimate on the contraction  rate of $\mathcal R^l$ over $M_k$, and $\lambda>1$ is a uniform estimate on the expansion rate of $\mathcal R$ near rotations. Let
$$
\mu = \frac{\tau^l}{\lambda^l}.
$$

\begin{lemma}
For any $\delta>0, \eps>0$ there exists a sufficiently small neighborhood $\mathcal U$ of $M_k$ in $\mathcal D_h^{\bbR}$, such that for any $f$ in this neighborhood and any  $v\in T\mathcal  D_h^{\bbR}$ with $s(v)<1/\delta$,  we have 
\begin{equation}\label{eq-1step}
s(d_f\mathcal R^l v) \le (\mu+\eps) s(v)+C \|P_s f\|,
\end{equation}
where $C>0$ is a constant that is independent from $\eps$ and $\delta$.

\end{lemma}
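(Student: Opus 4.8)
The plan is to compare $d_f\mathcal R^l$ with its value $d_{R_*}\mathcal R^l$ at the nearby rotation $R_*:=R_{P_u f}$. For this choice one has $f-R_*=P_s f$, hence $\|f-R_*\|=\|P_s f\|$, and since $\mathcal R^l$ is analytic on a fixed precompact neighborhood $\mathcal U_0$ of the compact set $M_k$ (Theorem~\ref{th-renorm}, Lemma~\ref{lem-domain}), the map $g\mapsto d_g\mathcal R^l$ is Lipschitz there with some constant $C_1=C_1(k)$, so $\|d_f\mathcal R^l-d_{R_*}\mathcal R^l\|\le C_1\|P_s f\|$. The mechanism is that $d_{R_*}\mathcal R^l$ respects the splitting $T\mathcal D^{\bbR}_h=\bbR\cdot 1\oplus\{v\mid\int_{\bbR/\bbZ}v=0\}$ and multiplies the slope by roughly $\mu$; transporting this estimate to $f$ costs only the nonlinear error above.

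First I would record the linear estimate at rotations. Since $\mathcal R$ maps rotations to rotations (the Remark after Lemma~\ref{lem-domain}), at every rotation $R$ where $\mathcal R^l$ is defined $d_R\mathcal R^l$ sends $1$ to a multiple of $1$; for $R_\beta\in M_k$ that multiple is $(G^{m_{l-1}+1}\!\circ\cdots\circ G^{m_0+1})'(\beta)$, of modulus $>\lambda^l$ (using $|(G^{m+1})'|>\lambda>1$ at each of the $l$ bounded-type rotations in the orbit). By Step~2, for $\beta\in M_k$ the hyperplane $\{\int v=0\}$ is mapped into itself with norm $<\tau^l$. Fixing $\eps$ (small, without loss of generality), I choose $\lambda'\in(\lambda/2,\lambda)$ and $\tau'>\tau$ with $(\tau'/\lambda')^l\le\mu+\eps/3$; by continuity of $g\mapsto d_g\mathcal R^l$ near $M_k$ together with local constancy of $n(\cdot)$ at the irrational points of $M_k$ (so that the combinatorics of renormalization is unchanged), both properties persist, with $\lambda^l,\tau^l$ replaced by $\lambda'^l,\tau'^l$, for every rotation $R$ in a sufficiently small neighborhood of $M_k$. (That $d_R\mathcal R^l$ preserves $\{\int v=0\}$ at \emph{every} rotation also follows from the coboundary-plus-ergodic-sum computation already used in the transversality lemma above, applied to a zero-average vector field; this route avoids the continuity argument for that single property.)

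Then comes the main estimate. Given $\delta$, I shrink to a neighborhood $\mathcal U\subset\mathcal U_0$ of $M_k$ so small that for $f\in\mathcal U$ one has $R_*=R_{P_u f}$ in the neighborhood above, $\mathcal R^l$ defined at $f$, and $C_1\|P_s f\|(1+1/\delta)<\lambda'^l/2$. We may assume $v\neq0$; then $s(v)<1/\delta$ forces $p_u(v)\neq0$. Write $v=p_u(v)\,1+p_s(v)$, put $P:=|p_u(v)|>0$, $S:=\|p_s(v)\|$, so $s(v)=S/P$ and $\|v\|\le P(1+s(v))$, and set $E:=d_f\mathcal R^l-d_{R_*}\mathcal R^l$, $\|E\|\le C_1\|P_s f\|$. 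Using the splitting at $R_*$,
\begin{align*}
|p_u(d_f\mathcal R^l v)| &\ge \lambda'^l P-\|E\|\,\|v\|\ge P\bigl(\lambda'^l-C_1\|P_s f\|(1+s(v))\bigr),\\
\|p_s(d_f\mathcal R^l v)\| &\le \tau'^l S+\|E\|\,\|v\|\le \tau'^l S+C_1\|P_s f\|\,P(1+s(v)).
\end{align*}
By the smallness of $\mathcal U$ the first quantity is $\ge P\lambda'^l/2>0$, so
$$
s(d_f\mathcal R^l v)\le\frac{\tau'^l s(v)+C_1\|P_s f\|(1+s(v))}{\lambda'^l-C_1\|P_s f\|(1+s(v))}.
$$
Expanding via $(1-x)^{-1}\le1+2x$ for $x\le 1/2$, the right-hand side splits into: the main term $\tfrac{\tau'^l}{\lambda'^l}s(v)\le(\mu+\eps/3)s(v)$; the unique term $\tfrac{C_1}{\lambda'^l}\|P_s f\|$ that carries no factor of $s(v)$; and a bounded number of cross terms, each proportional to $\|P_s f\|\,s(v)$, $\|P_s f\|\,s(v)^2$, or $\|P_s f\|^2(1+s(v))^2$. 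Shrinking $\mathcal U$ further and using $s(v)<1/\delta$ to absorb the powers of $s(v)$, each cross term is made $\le\frac{\eps}{3}s(v)$, while $\tfrac{C_1}{\lambda'^l}\|P_s f\|$ plus the $s(v)$-free part of the $\|P_s f\|^2$-term is $\le C\|P_s f\|$ with $C:=2C_1/\lambda'^l\le 2^{l+1}C_1/\lambda^l$. This gives \eqref{eq-1step}; and since $C_1=C_1(k)$, $\lambda$ is universal and $l=l(k)$, the constant $C$ does not depend on $\eps$ or $\delta$.

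The main obstacle is precisely this last point: the coefficient of $\|P_s f\|$ must be independent of $\eps$ and $\delta$. This forces one to keep the bound $\|v\|\le|p_u(v)|(1+s(v))$ in this explicit form rather than replacing $s(v)$ by $1/\delta$ prematurely, so that the nonlinear error separates into a piece proportional to $s(v)$ (harmlessly absorbed into $\eps\,s(v)$) and a universal piece proportional to $\|P_s f\|$. The hypothesis $s(v)<1/\delta$ enters only through how small $\mathcal U$ must be chosen in order to keep the denominator above $\lambda'^l/2$ and to absorb the $s(v)$-cross terms, and it is harmless that $\mathcal U$ then depends on $\delta$ and $\eps$, as the statement permits. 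A further mild subtlety is that the reference rotation $R_{P_u f}$ need not be of bounded type, so the hyperbolic rates hold at it only up to small error; this is absorbed into the $\eps$-term (or eliminated entirely via the ergodic-sum remark).
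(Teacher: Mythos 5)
Your proposal is correct, and the core mechanism is the same as the paper's: split $v$ into its mean and zero-mean parts, compare $d_f\mathcal R^l$ to the derivative at a nearby rotation via the Lipschitz estimate for $g\mapsto d_g\mathcal R^l$ on a fixed neighborhood of $M_k$, use the hyperbolic rates of Step 2, and assemble the slope inequality. The one substantive difference is the choice of reference rotation. You use the single rotation $R_*=R_{P_u f}$ for both components, which forces you to argue that the contraction rate on $\{\int v=0\}$ persists, at least approximately, at rotations near but possibly outside $M_k$; you offer two fixes (absorb the error into $\eps$, or observe via the Kac-type ergodic-sum identity that $d_R\mathcal R^l$ preserves $\{\int v=0\}$ at \emph{every} rotation). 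Both work, but you leave them somewhat sketchy. The paper avoids the issue altogether by using \emph{two} reference rotations in the same computation: $R_{P_u f}$ only for the $1$-direction (so that $\|f-R_{P_u f}\|=\|P_s f\|$ gives the $\|P_s f\|$ term exactly, and the expansion rate $\lambda$ is available at all rotations in the domain), and some $R_\beta$ with $\beta\in M_k$ for the zero-mean part (so the GY contraction rate $\tau^l$ and invariance of the hyperplane apply verbatim). This two-reference trick is a small economy; your single-reference version is nonetheless sound once the "nearby rotation" persistence is written out carefully, and yields the same constant $C$ up to a harmless factor.
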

\begin{proof}

Let $a=\int_{\bbR/\bbZ} v dx$, then  $v= a\cdot 1 + w$ with  $\int_{\bbR/\bbZ} w dz=0$; note that $$s(v)=\frac{\|w\|}{|a|}.$$ Let $b=P_u f\in \bbC$.

  Assume that $\mathcal U$ is a sufficiently small neighborhood of the compact set $M_k$, so that $\mathcal R^l$ is analytic on $\mathcal U$. Then there is a uniform constant $C_1>0$, such that for any $f\in\mathcal U$, we have
  $$\|d_f\mathcal R^l 1  - d_{R_b}\mathcal R^l 1 \| \le C_1\|f-R_b\| = C_1\|P_s f\|.$$
  Furthermore, for an arbitrarily small $\eps_1>0$, one can still assume that the neighborhood $\mathcal U$ is sufficiently small, so that
  $$\|d_f\mathcal R^l w  - d_{R_\beta}\mathcal R^l w \| \le C_1 \|f-R_\beta\|\cdot  \|w\| \le \eps_1 \|w\|$$ for some  $\beta\in M_k$. 
  
   We conclude that $$d_f\mathcal R^l v  = d_f\mathcal R^l (a\cdot 1+w)  = a\cdot d_{R_b}\mathcal R^l 1 +au_1 + d_{R_\beta}\mathcal R^l w +u_2,$$ where $\|u_1\|\le  C_1 \|P_s f\|  $ and $\|u_2\|\le \eps_1\|w\|$. Due to the estimates on the expansion and contraction rates, we obtain: 
 
  \begin{itemize}
  \item $p_u d_f\mathcal R^l v =a L \cdot 1 + a p_u u_1 + p_u u_2$ 
  \end{itemize}
  
  with $|L|\ge \lambda^l$, $|a p_u u_1|\le  C_1|a| \cdot \|P_s f\|$, and $\|p_u u_2\|\le \eps_1 \|w\|$; 
 \begin{itemize} 
\item   $p_s d_f\mathcal R^l v =a p_s u_1 + p_s  d_{R_\beta}\mathcal R^l w  +p_s u_2$ 
 \end{itemize}
with $|a p_s u_1|\le   C_1 |a|\|P_s f\|$ and $\|p_s  d_{R_\beta}\mathcal R^l w  +p_s u_2\|\le (\tau^l+\eps_1) \|w\|.$
 
  Thus 
  \begin{multline*}s(d_f\mathcal R^l v) \le \frac{C_1|a| \|P_s f\| + (\tau^l+\eps_1)\|w\|}{|a|\lambda^l  - C_1|a| \|P_s f\| - \eps_1\|w\| } =\\= \frac{C_1 \|P_s f\| + (\tau^l+\eps_1) s(v)}{\lambda^l-C_1\|P_s f\| -\eps_1 s(v) }.
  \end{multline*}
  
Recall that $s(v)<1/\delta$, $\lambda>1>\tau$. By choosing $\eps_1$ and the neighborhood $\mathcal U$ sufficiently small,  we can guarantee that 
$$s(d_f\mathcal R^l v) \le\left( \frac{\tau^l}{\lambda^l}+\eps\right) s(v) + C\|P_s f\|,$$ 
where $C=2C_1/\lambda^l$.   
\end{proof}

\textbf{Step 4: Slopes of curves: iterating the renormalization operator.}

In the next lemmas, we will use a neighborhood $\mathcal U$ of the set $M_k$ that has a special form. Namely, we represent an open neighborhood of real rotations in $\mathcal D_h^{\bbR}$ as a product  $[0,1]\times U$ where $U$ is an open ball centered at zero in the space $\{f\in \mathcal D_h^{\bbR} \mid \int_{\bbR/\bbZ}(f(z)-z) dz= 0\}$. In other words, we represent $f\in \mathcal D_h^{\bbR}$ as $f=R_b+g$ where $b= \int_{\bbR/\bbZ} (f(z)-z) dz$.

A neighborhood $\mathcal U$ of $M_k$ in $\mathcal D_h^{\bbR}$ will have the form $\mathcal U = J \times U_\nu(0)$ where $J$ is a union of open intervals and $U_\nu(0)$ is a open ball of radius $\nu$ in the space $\{f\in \mathcal D_h^{\bbR} \mid \int_{\bbR/\bbZ}(f(z)-z) dz= 0\}$. In what follows, depending on the context, we will identify $J$ either with the union of intervals, or with the family of rotations by the angles from these intervals.

The next lemma is the suitable version of the inclination lemma. 

\begin{lemma}
\label{lem-inclination}
 For any $\delta>0, \eps>0$,   there exist $C_1, C_2>0$  such that for a sufficiently small neighborhood  $\mathcal U= J \times U_\nu(0)$ of $M_k$, the following holds. 

Suppose that a one-parameter smooth family of analytic circle maps $\{f_t, t\in I\} \subset \mathcal D_h^{\bbR}$ with $f_0 = R_\beta\in M_k$ satisfies $s(\frac{d}{dt}f_t)<1/\delta$ for all $t\in I$.

Suppose that $(\mathcal R^l)^j f_t \in \mathcal U$ for all $j=0, 1, \dots, r-1$ and all $t\in I$, where $r\ge 0$ is an arbitrary integer.

Then for $g_t = (\mathcal R^l)^{r} f_t$, we have  
$$s\left(\frac{d}{dt}g_t\right) < C_1 (\mu+\eps)^{r}$$ and $$\|P_s g_t\|< C_2 (\mu+\eps)^{r}.$$ 

\end{lemma}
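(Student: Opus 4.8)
\emph{Plan of the proof.}
The idea is to iterate the one-step estimate \eqref{eq-1step} $r$ times, controlling along the way two auxiliary quantities. For $t\in I$ and $0\le j\le r$ set $v_j:=\frac{d}{dt}(\mathcal R^l)^{j}f_t$, $\sigma_j:=s(v_j)$ and $\rho_j:=\|P_s(\mathcal R^l)^{j}f_t\|$; by the chain rule $v_{j+1}=d_{(\mathcal R^l)^{j}f_t}\mathcal R^l(v_j)$, and what we must bound is $\sigma_r=s(\frac{d}{dt}g_t)$ and $\rho_r=\|P_sg_t\|$. Since $f_0=R_\beta$ is a rotation, $P_sf_0=0$. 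First I fix $\eps'>0$ small, depending only on $\eps$ and on $\mu,\lambda,\tau,l$ (hence only on $k$, $h$ and $\eps$); then I fix $C_1,C_2$; and only afterwards do I choose $\mathcal U=J\times U_\nu(0)$, contained in the neighborhood supplied by the preceding lemma for the parameters $\eps'$ and $\delta$, and small enough for all the estimates below — in particular with every component of $J$ of length less than $1$.

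Three recursions, all valid as long as $(\mathcal R^l)^jf_t\in\mathcal U$, drive the argument. \emph{(i) Slopes.} Estimate \eqref{eq-1step} with $\eps'$ in place of $\eps$ gives $\sigma_{j+1}\le(\mu+\eps')\sigma_j+C\rho_j$ whenever $\sigma_j<1/\delta$; since $\rho_j<\nu$ for $j\le r-1$ (because $(\mathcal R^l)^{j}f_t\in\mathcal U$), a choice of $\nu$ with $(\mu+\eps')/\delta+C\nu<1/\delta$ makes $\sigma_j<1/\delta$ self-improving, so it propagates from the hypothesis $\sigma_0<1/\delta$ to all $0\le j\le r$. \emph{(ii) Distance to rotations.} Writing $(\mathcal R^l)^jf_t=R_b+g$ with $b=P_u(\mathcal R^l)^jf_t\in J$ and $g$ of zero average, $\|g\|<\nu$, I Taylor-expand $\mathcal R^l$ at the rotation $R_b$: since $\{v:\int v=0\}$ is $d\mathcal R^l$-invariant and contracted by $\tau^l$ at rotations of $M_k$, continuity of $d\mathcal R^l$ gives $\|P_s(d_{R_b}\mathcal R^lg)\|\le(\tau^l+\eps_1)\|g\|$, while the analytic remainder is $O(\|g\|^2)$ uniformly near $M_k$; $\mathcal R^l(R_b)$ being a rotation contributes nothing to $P_s$, so for $\eps_1,\nu$ small we get $\rho_{j+1}\le(\tau^l+\eps')\rho_j$. \emph{(iii) Unstable speed.} The computation in the proof of the preceding lemma, $p_u\,d_f\mathcal R^lv=aL\cdot 1+a\,p_uu_1+p_uu_2$ with $|L|\ge\lambda^l$ and the errors dominated once $\|P_sf\|<\nu$ and $s(v)<1/\delta$, gives $|p_uv_{j+1}|\ge(\lambda-\eps')^{l}|p_uv_j|$, with the sign of $p_uv_{j+1}$ determined by that of $p_uv_j$ (one fixed sign change per application of $\mathcal R^l$).

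The conceptual heart is bounding the \emph{initial} quantity $\rho_0=\|P_sf_t\|$, which a priori is only $<\nu$. Using $P_sf_0=0$ and the slope hypothesis $s(\dot f_s)<1/\delta$, for any $t\in I$ we have $\|P_sf_t\|\le\int_I\|p_s\dot f_s\|\,ds<\frac1\delta\int_I|p_u\dot f_s|\,ds$. By (iii) iterated $r-1$ times, the map $t\mapsto u(t):=P_u(\mathcal R^l)^{r-1}f_t$ has derivative $p_uv_{r-1}$ of constant sign on $I$ with $|p_uv_{r-1}|\ge(\lambda-\eps')^{l(r-1)}|p_u\dot f_t|$, so ($I$ being an interval) $u$ is strictly monotone on $I$ and $u(I)$ is an interval, contained in the component of $J$ containing $u(0)=\beta_{r-1}$, hence of length $<1$. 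Therefore $\int_I|p_u\dot f_s|\,ds\le(\lambda-\eps')^{-l(r-1)}|u(I)|<(\lambda-\eps')^{-l(r-1)}$, and $\rho_0<\frac{(\lambda-\eps')^{l}}{\delta}(\lambda-\eps')^{-lr}$.

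It remains to combine. From (ii), $\rho_r\le(\tau^l+\eps')^{r}\rho_0<\frac{(\lambda-\eps')^{l}}{\delta}\left(\frac{\tau^l+\eps'}{(\lambda-\eps')^{l}}\right)^{r}$, and since $\frac{\tau^l+\eps'}{(\lambda-\eps')^{l}}\to\frac{\tau^l}{\lambda^l}=\mu$ as $\eps'\to0$, a small enough $\eps'$ yields $\|P_sg_t\|<C_2(\mu+\eps)^{r}$. Iterating (i), $\sigma_r\le(\mu+\eps')^{r}\sigma_0+C\sum_{j=0}^{r-1}(\mu+\eps')^{r-1-j}\rho_j$; bounding $\rho_j\le(\tau^l+\eps')^{j}\rho_0$ and summing the geometric series (note $\tau^l+\eps'>\mu+\eps'$) gives $\sum_{j=0}^{r-1}(\mu+\eps')^{r-1-j}\rho_j\le\frac{\rho_0}{\tau^l-\mu}(\tau^l+\eps')^{r}$, again $<C(\mu+\eps)^{r}$ for small $\eps'$; together with $(\mu+\eps')^{r}\sigma_0<(\mu+\eps)^{r}/\delta$ this gives $s(\frac{d}{dt}g_t)<C_1(\mu+\eps)^{r}$. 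The case $r=0$ is immediate from the hypotheses. The main obstacle is precisely this interlocking of the three quantities in the right order: it is the extra factor $(\lambda-\eps')^{-lr}$ in the bound for $\rho_0$ — coming from the exponential squeezing of the parameter interval $I$ under the unstable expansion — that upgrades the bare contraction rate $\tau^l$ to the hyperbolicity ratio $\mu=\tau^l/\lambda^l$ in both conclusions.
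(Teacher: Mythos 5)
Your argument is correct, and it is assembled differently from the paper's. The paper proceeds by induction on $r$: it iterates the one\nobreakdash-step slope estimate~\eqref{eq-1step} to control $s(\tfrac{d}{dt}g_t)$, using the inductive $\|P_s\|$ bounds at steps $<r$, and then recovers $\|P_s g_t\|$ by integrating the slope bound in $t$, using that $P_u(g_t)-P_u(g_0)$ stays in a single short component of $J$. You instead separate three recursions: an explicit contraction $\rho_{j+1}\le(\tau^l+\eps')\rho_j$ obtained by Taylor-expanding $\mathcal R^l$ at nearby rotations, the slope recursion from~\eqref{eq-1step}, and an explicit expansion $|p_u v_{j+1}|\ge(\lambda^l-\eps'')|p_u v_j|$; you then bound the \emph{initial} $\rho_0$ by squeezing it through $r-1$ unstable steps against the bounded length of $u(I)\subset J$. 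Both arguments rest on the same two eigenvalue estimates, but your decomposition makes the appearance of the ratio $\mu=\tau^l/\lambda^l$ structurally visible (contraction times inverse expansion), rather than having $\mu$ arrive prepackaged from~\eqref{eq-1step}. A further small advantage: the paper's proof at one point invokes $g_t\in\mathcal U$ to bound $P_u(g_t)-P_u(g_0)$ by $\theta$, whereas the hypothesis only supplies $(\mathcal R^l)^j f_t\in\mathcal U$ for $j\le r-1$; your version sidesteps this by working with $u(t)=P_u(\mathcal R^l)^{r-1}f_t$, which the hypothesis does place in $J$. (Both treatments of the $r=0$ base are equally terse; strictly one also needs $f_t\in\mathcal U$ there.)
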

\begin{proof}
Choose $\mathcal U$ in the form $\mathcal U=J\times U_{\nu}(0)$ using the previous lemma for $\eps/2$ instead of $\eps$. We will diminish $\mathcal U$ later in the proof.

We set $C_1 = 1+1/\delta$, $C_2 = \eps/(2C)$ where $C$ is the same as in the previous lemma. The proof goes by induction. The base $r=0$ is obvious if $\mathcal U$ is small enough.  Suppose that the statement holds for iterates $0, 1, 2, \dots, r-1$ of $\mathcal R^l$. 

Let  $v_t=\frac{d f_t}{dt}$. In the next computation, we will fix $t\in I$ and write $f, v$ instead of $f_t, v_t$. 
Let $f_j = (\mathcal R^l)^jf$. 

Then due to the previous lemma, 
\begin{multline*}
s\left(\frac{d}{dt}g_t\right)= s((d|_f\mathcal R^l)^{r} v)\le \left( \mu+\frac {\eps}2\right) s((d|_f\mathcal R^l)^{r-1} v) + C\|P_s f_{k-1}\| \le \dots \\ \le   \left(\mu+\frac{\eps}2\right)^{r} s(v) +C \left(\mu+\frac{\eps}2\right)^{r-1} \|P_s f_{1}\| + C\left(\mu+\frac{\eps}2\right)^{r-2}\|P_s f_{2}\| + \dots + C\|P_s f_{r-1}\|.  
\end{multline*}
Since $\|P_s f_{j}\|<C_2(\mu+\eps)^{j} $ due to the inductive statement, and $s(v)<1/\delta$, we get 
\begin{multline}
s\left(\frac{d}{dt}g_t\right)\le \frac{(\mu+\frac {\eps}2)^{r}}{\delta} + C C_2(\mu+\eps)^{r-1}  \left(1+\frac{\mu+\frac{\eps}2}{\mu+\eps} + \frac{(\mu+\frac{\eps}2)^2}{(\mu+\eps)^{2}}+ \dots +\frac{(\mu+ \frac{\eps}2)^{r-1}}{(\mu+\eps)^{r-1}}\right) < \\<\frac{(\mu+\eps)^{r}}{\delta}   + C C_2 \frac{(\mu+\eps)^{r-1}}{1 - \frac{\mu+\frac{\eps}{2}}{\mu+\eps}} < \frac{(\mu+\eps)^{r}}{\delta}   + C C_2 \frac{(\mu+\eps)^r}{\eps/2} =\\=(\mu+\eps)^{r} \left(\frac{1}{\delta} + 2C C_2/\eps\right) = (\mu+\eps)^{r}  \left(\frac 1 {\delta} + 1\right) =C_1 (\mu+\eps)^{r}. 
\end{multline}

Now, prove that $\|P_s g_t\|<C_2(\mu+\eps)^{r}$.    
We will integrate  the already established inequality $s(\frac{d}{dt}g_t) < C_1 (\mu+\eps)^{r} $ with respect to $t$. Since the slope is bounded, $p_u (\frac{d}{dt}g_t) = \int \frac{d}{dt}g_t dz \neq 0$; assume without loss of generality that this integral is positive for all $t\in I$.

The bound on the slope implies that for all $t$, $\|p_s \frac{d}{dt}g_t\|\le C_1(\mu+\eps)^r\|p_u  \frac{d}{dt}g_t\|$ i.e.  $$\sup_{\Pi_h }\left|\frac{d}{dt}g_t -\int_{\bbR/\bbZ} \frac{d}{dt}g_t dx\right| 
\le   C_1 (\mu+\eps)^{r}  \int_{\bbR/\bbZ} \frac{d}{dt}g_t dx.$$ Integrating over $t$, we get that for any $t>0$, $t\in I$, 
\begin{equation}
\label{eq-inclination} \sup_{\Pi_h} \left|g_t(x) - x-\int_{\bbR/\bbZ} (g_t(x)-x) dx\right| \le C_1(\mu+\eps)^{r} \left(\int_{\bbR/\bbZ} (g_t(x)-x) dx - \int_{\bbR/\bbZ} (g_0(x)-x) dx\right). 
\end{equation}
In the left side of the above inequality, we used the fact that for $t=0$, $g_0$ is a rotation. Now we get
\begin{equation}
\label{eq-inclination1} \|P_s g_t\|  \le C_1(\mu+\eps)^{r} \left(P_u(g_t)-P_u(g_0)\right). 
\end{equation}
Similar estimate holds for $t<0$.

Since $g_t$ belongs to $\mathcal U = J\times U_\nu(0)$ for all $t\in I$, the expression $\left(P_u(g_t)-P_u(g_0)\right)$  is bounded above by the length $\theta$ of the longest subinterval of $J$. Since $M_k$ is a nowhere dense set, by diminishing its neighborhood $\mathcal U$, we can achieve arbitrarily small $\theta$ and thus guarantee that $\theta C_1 < C_2$. 
\end{proof}

\textbf{Step 5: Size of the arc}

Recall that the action of  $\mathcal R$ on rotations is induced by the power of a Gauss map that depends on the rotation: if $n=n(f)=q_{m(f)}$, then $\mathcal R (R_\alpha)=R_\beta$, where $\beta =G^{m(f)+1} (\alpha) = \alpha_{m(f)+1}$, and  $m(f)$ is bounded on any closed set that does not contain rotations with rotation numbers $1/j, j\le 100$. Choose $m$ so that $m(f)\le m$ on a certain neighborhood of $M_k$; we will assume that our neighborhood $\mathcal U$ is small enough so that $m(f)\le m$ on $\mathcal U$. 
  
\begin{lemma}
\label{lem-arc}
Let $\{f_t, t\in I\}\subset \mathcal D_h^{\bbR}$ be a smooth family of analytic circle diffeomorphisms such that $f_0=R_\alpha\in M_k$ and the family $\{f_t\}$ is transversal to the subspace $\{v\in T\mathcal D_h^{\bbR} \mid \int_{\bbR/\bbZ} v dz=0\}$ at $t=0$. 
For any sufficiently small neighborhood  $\mathcal U = J\times U_\nu(0)$ of $M_k$ there exists $N$ with the following property:  for any nonnegative integer $r$ and any $t$ on the arc
\begin{equation}
\label{eq-arc}
J_r = \{t\in I \mid \rot f_t \in [\frac{p_{r+N+1}}{q_{r+N+1}}, \frac{p_{r+N}}{q_{r+N}}]\}
\end{equation}
we have $\mathcal R^j f_t\in \mathcal U$ for all $j=0, 1, \dots, [r/(m+1)]$.

\end{lemma}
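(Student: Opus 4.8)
The plan is to prove the inclusion by a finite induction on the number $j$ of renormalization steps, after two harmless reductions. First, I would shrink the parameter interval $I$ around $t=0$; since $P_sf_0=0$ and the family is continuous, this makes $\rho:=\sup_{t\in I}\|P_sf_t\|$ as small as I wish, and it costs nothing because only the germ of the family at $t_0$ enters the bubble estimates. Second, I fix the target neighborhood $\mathcal U=J\times U_\nu(0)$ and let $\delta_J>0$ be the distance from the compact set of angles $M_k$ to the complement of the open set $J$, so that every $f$ with $\dist(P_uf,M_k)<\delta_J$ and $\|P_sf\|<\nu$ lies in $\mathcal U$. I will use two structural facts recalled above: $\mathcal R$ maps rotations to rotations and acts on their rotation numbers by $G^{m(\cdot)+1}$ with $m(\cdot)\le m$ on $\mathcal U$; and, on a sufficiently small $\mathcal U$, $P_uf$ and $\rot f$ differ by $O(\|P_sf\|^2)$.

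Two ingredients drive the induction. \emph{Arithmetic bookkeeping:} if $t\in J_r$, then $\rot f_t$ lies in the continued-fraction cylinder between the convergents $p_{r+N}/q_{r+N}$ and $p_{r+N+1}/q_{r+N+1}$ of $\alpha$, hence agrees with $\alpha$ in at least its first $r+N$ partial quotients; since one application of $G$ deletes one partial quotient, $\rot(\mathcal R^jf_t)=G^{a_j}(\rot f_t)$ with $a_j=\sum_{i<j}\bigl(m(\mathcal R^if_t)+1\bigr)\le j(m+1)\le r$ agrees with $\alpha_{a_j}=G^{a_j}(\alpha)$ in at least its first $r+N-a_j\ge N$ partial quotients; as $\alpha_{a_j}$ is again of type bounded by $k$, we get $R_{\alpha_{a_j}}\in M_k$ and $|\rot(\mathcal R^jf_t)-\alpha_{a_j}|\le q_N^{-2}\le C^{-N}$ for a constant $C>1$ (the $q_n$ grow at a definite exponential rate for type bounded by $k$). \emph{Stable contraction:} by Step~2 the derivative $d_{R_\beta}\mathcal R^l$ leaves the zero-average subspace invariant with norm $<\tau^l<1$, uniformly for $R_\beta\in M_k$, and — since $\mathcal R$ and $\mathcal R^l$ preserve the family of rotations — a continuity argument as in the proof in Step~3 gives, on a small $\mathcal U$, uniform one-step bounds $\|P_s(\mathcal R^lg)\|\le(\tau^l+\eps)\|P_sg\|$ and $\|P_s(\mathcal Rg)\|\le B\|P_sg\|$. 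Writing $j$ as a multiple of $l$ plus a remainder $<l$, telescoping the full blocks of $l$ renormalizations (each contracting $\|P_s\|$, as $\tau^l+\eps<1$) and bounding the at most $l-1$ leftover steps by $B^{l}$, I obtain the $j$-independent estimate $\|P_s(\mathcal R^jf_t)\|\le B^{l}\|P_sf_t\|\le B^{l}\rho$. Now I shrink $I$ so that $B^{l}\rho<\nu$ and $O\bigl((B^{l}\rho)^2\bigr)<\delta_J/2$, and take $N$ with $C^{-N}<\delta_J/2$. The induction on $j$ from $0$ to $[r/(m+1)]$ then closes: given $\mathcal R^if_t\in\mathcal U$ for all $i<j$, the map $\mathcal R^jf_t$ is defined (as $\mathcal R^{j-1}f_t\in\mathcal U$ lies in the domain of $\mathcal R$), satisfies $\|P_s(\mathcal R^jf_t)\|\le B^{l}\rho<\nu$, and has $P_u(\mathcal R^jf_t)$ within $C^{-N}+O((B^l\rho)^2)<\delta_J$ of $\alpha_{a_j}\in M_k$, whence $\mathcal R^jf_t\in J\times U_\nu(0)$; the base case $j=0$ is the same estimate with $a_0=0$.

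The one genuinely dynamical point, and what I expect to be the main obstacle, is this $j$-independent control of $\|P_s(\mathcal R^jf_t)\|$ for all $j$ up to $[r/(m+1)]$: a crude Lipschitz bound for the iterate $\mathcal R^j$ would contribute a factor growing like $L^{r/(m+1)}$, which could overwhelm the smallness $\sim C^{-r}$ carried by $\|P_sf_t\|$, so it is essential to use that $\|P_sf\|$ is exactly the distance to the $\mathcal R$-invariant family of rotations, that $\mathcal R^l$ contracts there, and that only at most $l-1$ of the $j$ steps ever lie outside a contracting block — all with constants uniform over the compact set $M_k$, which is what lets a single $N$ serve for every $r$. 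The transversality hypothesis is not used directly here (only smallness of the family on $I$ is); it is what makes the conclusion usable when this lemma is later combined with the inclination lemma~\ref{lem-inclination}. The remaining routine points are deriving the uniform one-step bounds from Step~2 and recording the harmless discrepancy $P_uf-\rot f=O(\|P_sf\|^2)$ near rotations.
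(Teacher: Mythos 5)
Your argument is correct and takes a genuinely different route from the paper's. The paper proves this lemma by invoking Lemma~\ref{lem-inclination} inside the induction step: given $\mathcal R^i f_t\in\mathcal U$ for $i\le j$, the inclination estimate on the \emph{slope} $s(\frac{d}{dt}g_t)$ is integrated in $t$ (using that $g_0$ is a rotation) to yield $\|P_s(\mathcal R^{j}f_t)\|<C_2(\mu+\eps)^{j}$. You instead bound $\|P_s(\mathcal R^j f_t)\|$ pointwise in $t$ by a telescoping contraction: the one-step estimate $\|P_s(\mathcal R^lg)\|\le(\tau^l+\eps)\|P_sg\|$ follows from $\mathcal R^l$ preserving the line of rotations together with the derivative bound of Step~2, and composing full blocks of $l$ steps with at most $l-1$ leftover Lipschitz steps gives the uniform bound $\|P_s(\mathcal R^jf_t)\|\le B^{l}\rho$. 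This is weaker than the paper's exponential decay (your rate is governed by $\tau^l$ alone, not $\mu=\tau^l/\lambda^l$), but it is all that Lemma~\ref{lem-arc} needs: the iterates only have to stay in $\mathcal U$, and the arithmetic bookkeeping you spell out controls $P_u$. Your route is arguably more self-contained inside this proof, whereas the paper recycles Lemma~\ref{lem-inclination} because it is needed anyway in Step~6 and records the sharper rate that eventually enters the constant $\Lambda$. The one-step contraction is not quite routine (one must Taylor-expand around the rotation $R_{P_u g}$, use invariance of rotations to kill the zero-order term, and use that near $M_k$ the projection $p_s\circ d\mathcal R^l$ has norm $\le\tau^l+\eps$ on the zero-average subspace), but it is a standard hyperbolic argument and your sketch points at the right ingredients.

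Two points deserve care. First, the remark that transversality ``is not used directly'' is a little misleading. Your reduction --- making $\rho=\sup\|P_sf_t\|$ over $\bigcup_r J_r$ small by shrinking $I$, equivalently by taking $N$ large --- relies on the set $\{t:\rot f_t=\alpha\}$ being the single point $t_0$, and that is precisely where transversality enters for you, just as the paper needs it to fit every arc $J_r$ into the neighborhood where the slope bound holds. Second, the claimed estimate $|P_uf-\rot f|=O(\|P_sf\|^2)$ is not correct in general: across the phase-locking plateaus that accumulate at any irrational rotation number the discrepancy is of order $\|P_sf\|$, not its square. What is true, elementary, and sufficient is the Lipschitz bound $|P_uf-\rot f|\le\|P_sf\|_{C^0}$, which is also what the paper uses; substituting it for your quadratic estimate leaves the rest of the bookkeeping unchanged.
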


\begin{proof}

Choose $\delta>0$ such that  for $t$ in a small neighborhood of zero,  we have $s(\frac{df_t}{dt})<1/\delta$; we will choose $N$ sufficiently large so that the arc $\{f_t\}_{t\in J_r}$ 
 is in this neighborhood, for every $r\ge 0$.  

Fix $\eps>0$ with $\mu+\eps<1$ and apply Lemma \ref{lem-inclination} to $\eps, \delta$ to fix a neighborhood $\mathcal U =  J\times U_\nu(0)$ of the set $M_k$.  We assume that the neighborhood $\mathcal U$ is sufficiently small, so that $s(\frac{df_t}{dt})<1/\delta$, whenever $f_t\in\mathcal U$.

Note that $\alpha_j$ is of type bounded by $k$ for all $j\ge 0$, and thus $R_{\alpha_j}\in M_k$. 
 Let $\alpha_j\sim \frac{p^j_{i}}{q^j_{i}}$ be the continued fractional convergents for $\alpha_j$; denote $I_j^k = [p^j_k/q^j_k, p^j_{k+1}/q^j_{k+1}] $. Note that $\alpha_j\in I_j^k$ and $G(I_j^k) = I_{j+1}^{k-1}$.  Choose $N$, $0<\kappa<\nu$ such  that both $\alpha_j$ and the intervals $I_j^N$  do not  approach closer to $\partial J$ than $2\kappa$.  This is possible because the orbit of $\alpha$ cannot accumulate to $\partial J$ (it stays in the compact set  $M_k$ that belongs to $J$) and the intervals $I_j^N$ are uniformly short for large $N$.

We choose $s$ so that $ C_2(\mu+\eps)^{s}<\kappa$. Since $f_0$ and its images under $\mathcal R$ are rotations, by increasing $N$, we can guarantee that the arc $J_r$ is short enough for any $r\ge 0$, so that for any $t\in J_r$, the maps $\mathcal R^j(f_t), j=0, 1, \dots, s$ are $\kappa$-close to rotations and belong to $\mathcal U$. 

Next, we fix any $r\ge 0$, and we prove by induction on $j$ that for any $t\in J_r$, each map $\mathcal R^j(f_t), j=s+1, \dots, [r/(m+1)]$ also belongs to $\mathcal U$ and is $\kappa$-close to rotations in the sense that $\|P_s \mathcal R^j(f_t)\|<\kappa$. The case $j=s$ will serve as an inductive base.

Indeed, suppose that this statement holds for $s, s+1, \dots, j$. Then for $j+1$,  the map $g_t= \mathcal R^{j+1}(f_t)$ is $C_2(\mu+\eps)^{j+1}$--close to rotation due to Lemma \ref{lem-inclination}. This is smaller than $\kappa$ due to the choice of $l$. Thus $\|P_s g_t\|<\kappa<\nu$.

It remains to prove that $P_u g_t\in J$. 
 Let $\mathcal R^{j+1} R_\alpha=\alpha_{i}$; note that $i\le (m+1)(j+1)$ since $\mathcal R$ acts on rotation numbers as a $(m(f)+1)$-st power of the Gauss map $G$ and $m(f)\le m$. Since $(m+1)(j+1)\le (m+1)\cdot [r/(m+1)]\le r$, we have $i\le r$. 
 
 Now, for $f_t$ with $t\in J_r$,
we have $\rot f_t \in I_0^{r+N}$, thus  $\rot (g_t) \in G^i(I_0^{r+N})= I_i^{r+N-i}$. We have $I_i^{r+N-i}\subset I_i^N$ since $i \le r$. Since $\|P_s g_t\|<\kappa$, the integral $\int_{\bbR/\bbZ} (g_t(z)-z) dz$  belongs to the $2\kappa$-neighborhood of the interval $I_i^N$ 
and thus belongs to $J$ due to the choice of $N, \kappa$. 
 
 Therefore $P_u g_t \in J$ and $\|P_s g_t\|<\nu$, thus $g_t\in \mathcal U$.

This implies the statement. 
\end{proof}

\textbf{Step 6: End of the proof}

As before, we assume that $t_0=0$ in the statement of Theorem \ref{th-main}.
Assume that after passing to the linearizing chart for $f_0$, the family $\mathcal F=\{f_t\}$ is contained in $\mathcal D_{h_1}^{\bbR}$ for some $h_1>0$, and $f_0=R_\alpha$. Then, according to Step~1, there exists an integer $s>0$ that depends on $\mathcal F$ and $\alpha$, such that the renormalization operator $\mathcal R_{h_1,h,s}$ takes the family $\mathcal F$ to a family $\tilde{\mathcal F}=\{\tilde f_t\}_{t\in I}\subset \mathcal D_h^{\bbR}$.

Choose $\delta>0$ such that  for $t$ in a small neighborhood of zero,  we have $s(\frac{d\tilde f_t}{dt})<1/\delta$. Fix $\eps>0$ and set $\tilde \mu = \tau^l/\lambda^l +\eps$. Find a neighborhood $\mathcal U$ using Lemmas \ref{lem-arc} and \ref{lem-inclination}, applied to the family $\tilde{\mathcal F}=\{\tilde f_t\}$.

Lemmas \ref{lem-arc} and \ref{lem-inclination} imply that for $N$ that depends  on the family $\tilde{\mathcal F}=\{\tilde f_t\}\subset\mathrm D_h^{\bbR}$ (hence, on the family $\mathcal F$), the corresponding arcs 
\begin{equation*}
	J_r = \{t\in I \mid \rot \tilde f_t \in [\frac{p_{r+N+1}}{q_{r+N+1}}, \frac{p_{r+N}}{q_{r+N}}]\}
\end{equation*}
satisfy the following: for any $r\ge 0$, $t\in J_r$ and $j=[[r/(m+1)]/l]$,
 $$
\dist_{C_0(\Pi_{h})} (\mathcal R^{lj} \tilde f_t, rotations) < C_2 \cdot {\tilde \mu}^{j},
$$
where $C_2>0$ depends on $\delta$, hence on the family $\mathcal F$.

Due to Cauchy estimates we have
 $$
\dist_{C^2(\Pi_{h/2})} (\mathcal R^{lj} \tilde f_t, rotations) < C_3 \cdot \tilde \mu^{j}.
$$
Hence the distortion of the maps $g_t=\mathcal R^{lj} \tilde f_t$ is estimated above by $3 C_3 \cdot  \tilde \mu^j$. Theorem \ref{bubble_theorem} implies that the bubble of the renormalized family $g_t$ attached at $p/q\in \bbQ$  will fit inside the disc of radius  $C_3 \cdot {\tilde \mu}^{j}/q^2$ that is tangent to $\bbR$ at $p/q$. 

Theorem \ref{th-renorm-bubbles} and Lemma \ref{lem-PSL} imply that for any bubble of the initial family attached at any point $p/q$ on the segment
$[p_{r+s+N+1}/q_{r+s+N+1}, p_{r+s+N}/q_{r+s+N}]$ fits inside  the disc of radius  $C_3 \cdot {\tilde \mu}^{j}/q^2$ that is tangent to $\bbR$ at $p/q$, where $j = [[r/(m+1)]/l]$. 

This implies the statement of Theorem \ref{th-main} with $\Lambda = \tilde \mu^{1/((m+1)l)}$ and $c(\alpha, \mathcal F) = C_3 \cdot (\tilde \mu)^{-N-s}$.

\end{proof}

The resulting scaling factor is at most  $(\frac{\tau^l}{\lambda^l}+\eps)^{1/((m+1)l)}$ where $\tau, \lambda$ are the expansion and the contraction rate associated with the renormalization operator $\mathcal R$ that corresponds to the ${m(f)}+1$-power of the Gauss map and $m\ge m(f)$. So for periodic orbits of $\mathcal R$, the scaling factor can be  estimated from above by the fraction of top multipliers of $\mathcal R$: the unstable and the top stable multiplier. This implies the following proposition. 

\begin{proposition}
\label{prop-golden}
For the golden ratio rotation number $\alpha=\phi=\frac{\sqrt{5}-1}{2}$, Theorem \ref{th-main} holds with some $\Lambda<\phi^2$.
\end{proposition}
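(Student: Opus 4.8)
The plan is to exploit the fact that the golden ratio is a fixed point of the Gauss map, so that $R_\phi$ is a genuine \emph{fixed point} of the renormalization operator $\mathcal R_h$, and then to rerun the proof of the Main Theorem localized at this single fixed point, replacing the constants that were chosen uniformly over $M_k$ by the sharp spectral data of $d_{R_\phi}\mathcal R_h$.

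First I would record that spectral data. Since $G(\phi)=\tfrac1\phi-1=\phi$, every Gauss iterate $\alpha_j$ of $\phi$ equals $\phi$, and $\phi\notin K$, so $n(\cdot)$ and hence the index $m(\cdot)$ are locally constant near $R_\phi$; write $n(\phi)=q_{m_0}$ (concretely $m_0=10$ and $n(\phi)=q_{10}=89$). On a neighbourhood of $R_\phi$ the operator $\mathcal R_h$ acts on rotations by $R_\alpha\mapsto R_{G^{m_0+1}(\alpha)}$, and since $G^{m_0+1}(\phi)=\phi$ this means $\mathcal R_h(R_\phi)=R_\phi$. By Theorem~\ref{th-renorm} (applicable because $\Phi(\phi)<\infty$ by Lemma~\ref{lem-B-bound}) the differential $d_{R_\phi}\mathcal R_h$ is hyperbolic: its unstable direction is tangent to the rotation family with eigenvalue $(G^{m_0+1})'(\phi)=(-\phi^{-2})^{m_0+1}$, of modulus $\phi^{-2(m_0+1)}$; and its stable subspace is $\{v\mid\int_{\bbR/\bbZ}v\,dz=0\}$, on which \cite[Theorem~4.6]{GY} gives $\|d_{R_\phi}\mathcal R_h^l v\|\le c(\phi,h)\,0.1^l\|v\|$, so the stable spectral radius does not exceed $0.1<1$.

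Next I would rerun Steps~2--6 of the proof of the Main Theorem with $M_k$ replaced by the single point $\{R_\phi\}$ and with $m:=m_0$. Because the orbit is a single point, for any $\eps>0$ one may take $l$ so large that $c(\phi,h)\,0.1^l<\tau^l$ with $\tau:=0.2<1$, so on a sufficiently small neighbourhood of $R_\phi$ the $l$-step contraction constant on the stable subspace is $\tau^l$, while the lower bound $\lambda$ for the expansion along rotations can be taken arbitrarily close to $\phi^{-2(m_0+1)}$ by shrinking the neighbourhood. Steps~3--6 use only such per-step contraction/expansion bounds together with analyticity on a neighbourhood, so they go through verbatim and yield the Main Theorem for the family through $R_\phi$ with $\Lambda=\bigl(\tau^l/\lambda^l+\eps'\bigr)^{1/((m_0+1)l)}$, where $\eps'>0$ is the free parameter ``$\eps$'' of Step~3. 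Letting $\eps,\eps'\to0$ and $\lambda\to\phi^{-2(m_0+1)}$ one gets $\Lambda\to\bigl(\tau\,\phi^{2(m_0+1)}\bigr)^{1/(m_0+1)}=\tau^{1/(m_0+1)}\,\phi^2$, which is strictly less than $\phi^2$ because $\tau<1$; hence for sufficiently small values of the parameters $\Lambda<\phi^2$. Structurally, the ``unit'' $m_0+1$ of renormalization enters both the unstable multiplier $\phi^{-2(m_0+1)}$ and the exponent $1/((m_0+1)l)$ of $\Lambda$, so the two cancel and leave $\phi^2$ times the $(m_0+1)$-st root of the contraction rate.

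The step I expect to be the main obstacle is the localization: the proof of the Main Theorem is organized around estimates that are uniform over the compact set $M_k$, and one must verify that the inclination-lemma machinery of Steps~3--6 (the neighbourhoods $\mathcal U=J\times U_\nu(0)$ and the choices of $l$, $N$, $\delta$) still works when those uniform constants are replaced by the sharp spectral data at the isolated hyperbolic fixed point $R_\phi$. A secondary point to check is that the preliminary ``wide-strip'' renormalization $\mathcal R_{h_1,h,s}$ of Step~1 does not affect the conclusion: since $\alpha_s=\phi$ it again outputs a family through $R_\phi$, and it only changes the constant $c(\alpha,\mathcal F)$ and the initial index $N$, not the exponential rate $\Lambda$.
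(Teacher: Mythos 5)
Your proof is correct and follows essentially the same route as the paper: identify $R_\phi$ as a fixed point of $\mathcal R_h$, observe that the unstable eigenvalue there is exactly $|(G^{m+1})'(\phi)|=\phi^{-2(m+1)}$, and plug this into the formula $\Lambda=(\tau^l/\lambda^l+\eps)^{1/((m+1)l)}$ from the end of the Main Theorem's proof, using $\tau<1$ to conclude $\Lambda<\phi^2$. The paper's version is far terser (three lines), silently taking the sharp value of $\lambda$ at the fixed point; you make explicit the localization step that justifies replacing the uniform bound over $M_k$ by the spectral data at $R_\phi$, which is a reasonable thing to spell out.
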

\begin{proof}
Note that $R_\phi$ is a fixed point of the renormalization operator, and $\lambda = \|d|_{R_\phi}\mathcal R 1 \|= |dG^{m+1 }|   = (\phi^{-2})^{m+1}$. Since $\tau<1$, for small $\eps$ we have  $$\Lambda  = (\frac{\tau^l}{\lambda^l}+\eps)^{1/(l(m+1))} < \frac{1}{\lambda^{1/(m+1)}} = \phi^{2}. $$
\end{proof}

\printbibliography

\end{document}